\newtheorem{theorem}{THEOREM}[section]
\newtheorem{lemma}[theorem]{LEMMA}
\newtheorem{corollary}[theorem]{COROLLARY}
\newtheorem{proposition}[theorem]{PROPOSITION}
\theoremstyle{definition}
\newtheorem{definition}[theorem]{Definition}
\newtheorem{construction}[theorem]{Construction}
\newtheorem{example}[theorem]{Example}
\newtheorem{remark}[theorem]{Remark}
\newtheorem{question}[theorem]{Question}
\newtheorem{problem}[theorem]{Problem}
\newcommand{\HoSi}{{\sf HoSi}} 
\newcommand{\norml}{\vartriangleleft}
\DeclareMathOperator{\Aut}{Aut}
\newcommand{\PG}{\mathop{\mathrm{PG}}}
\newcommand{\PSU}{\mathop{\mathrm{PSU}}}
\newcommand{\AGL}{\mathop{\mathrm{AGL}}}
\newcommand{\AG}{\mathop{\mathrm{AG}}}
\newcommand{\GF}{\mathop{\mathrm{GF}}}
\newcommand{\la}{\langle}
\newcommand{\ra}{\rangle}
\DeclareMathOperator{\Wr}{wr}
\DeclareMathOperator{\ssg}{ssg}
\def\dotcup{\DOTSB\mathop{\overset{\textstyle.}\cup}}
\title{Quotients of incidence geometries}
\author{Philippe Cara\footnote{The first author was partially
    supported by grant 15.263.08 of the ``Fonds voor Wetenschappelijk
    Onderzoek-Vlaanderen''.}  \\ 
Department of Mathematics\\
Vrije Universiteit Brussel\\
Pleinlaan 2\\
B-1050 Brussel\\
BELGIUM\\
pcara@vub.ac.be
\and Alice Devillers, Michael Giudici and Cheryl E.~Praeger\footnote{The paper forms
    part of Australian Research Council Discovery grant DP0770915
    which includes the Australian Research Fellowship of the third
    author. The second author is supported within the Australian
    Research Council Federation Fellowship FF0776186 project of the
    fourth author. The fourth author is
    supported by Australian Research Council Federation Fellowship
    FF0776186.} \\
School of Mathematics and Statistics\\
The University of Western Australia\\
35 Stirling Highway\\
Crawley WA 6009\\
AUSTRALIA\\ alice.devillers@uwa.edu.au, michael.giudici@uwa.edu.au, \\ cheryl.praeger@uwa.edu.au}
\begin{document}
\maketitle
\begin{abstract}
We develop a theory for quotients of geometries and obtain sufficient
conditions for the quotient of a geometry to be a geometry. These
conditions are compared with earlier work on quotients, in particular
by Pasini and Tits. We also explore geometric properties such as connectivity, firmness and transitivity conditions to determine when they are preserved under the quotienting operation. We show that the class of coset pregeometries, which contains all flag-transitive geometries, is closed under an appropriate quotienting operation.
\end{abstract}

\textbf{MSC2000 : }05B25, 51E24, 20B25.

\section{Introduction}
\label{sec:intro}

A common technique when studying a class of mathematical objects is to
undertake a quotienting process and reduce the problem to studying the
``basic'' objects in the class, these being the objects which have no
proper quotients. This technique has been employed very successfully
in graph theory, in particular in the study of distance transitive
graphs \cite{smith}, $s$-arc-transitive graphs \cite{praeger} and
locally $s$-arc-transitive graphs \cite{GLP1}. The success of such
methods relies on taking appropriate quotients so that the quotient
inherits the desired properties of the original object. Bipartite graphs are
incidence geometries of rank 2 for which the 
cited results apply. For higher ranks we don't have such insight. The aim of
this paper is to study quotients of incidence geometries in general and to
understand when a geometric property is inherited by a quotient. The
reverse operation 
of expanding a given geometry to discover families of geometries with
the given one as quotient is also important. Quotients of geometries
are featured in particular in the seminal 1981 paper of Tits
\cite{titslocal}, and in Pasini's standard reference \cite{Pasi94} for
diagram geometries. In these works various restrictions are made, both
on the geometries and the projection maps to the quotients. 

Like Tits, we consider quotients of what we now call pregeometries
(see below) and what Tits called `geometries'. Also, like Pasini (but
unlike Tits), the most general quotients we consider are constructed
modulo a type-refining partition (see Subsection \ref{sec:prelim}). We
also consider the special quotients that Tits considered (and that we
call `orbit-quotients'), constructed modulo the orbits of some
subgroup of automorphisms, and an even more special class called
`normal quotients' that proved effective in the case of
$s$-arc-transitive and locally $s$-arc-transitive graphs mentioned
above.  Fundamental problems that arise are: 

\begin{problem}
\label{prob:1}
Determine when the quotient of a geometry is a geometry.
\end{problem}

\begin{problem}
\label{prob:2}
Determine when the quotient of a flag-transitive geometry is a
flag-transitive geometry. 
\end{problem}

Because of the restricted type of quotients considered by Tits or
Pasini, these problems essentially did not arise in their
work. However, in a general investigation of the quotients of
geometries and pregeometries, they are among the most important
problems to consider. In Section~\ref{sec:survey}, we give a brief
summary of the kinds of quotients considered by Tits and Pasini,
together with some instructive examples to facilitate an understanding
of our results, and enable their comparison with earlier work. 

A partial answer to Problem \ref{prob:1} is given by the following theorem which is a combination of Lemmas \ref{lem:rank3}, \ref{lem:flagliftgeom}, \ref{lem:covgivesgeom}, \ref{lem:iscovering} and \ref{lem:shadowable}. The basic definitions for pregeometries are given in Subsection~\ref{sec:prelim},  covers are discussed in Section~\ref{sec:covers} and shadowable geometries are discussed in Section~\ref{sec:shadowable}. The \textsc{(FlagsLift)} condition is introduced in Subsection~\ref{sec:flagslift} and essentially states that every flag in the quotient arises as the projection of a flag in the original geometry.

\begin{theorem}
\label{thm:mainthm} 
Let $\Gamma$ be a geometry with type-refining partition $\mathcal{B}$. Then $\Gamma_{/\mathcal{B}}$ is a geometry if at least one of the following holds:
\begin{enumerate}
 \item $\Gamma$ has rank at most $3$,
 \item the condition \textsc{(FlagsLift)} holds for $\Gamma$,
\item $\Gamma$ is a cover of $\Gamma_{/\mathcal{B}}$, 
\item the projection $\pi_{/\mathcal{B}}$ is surjective on corank $1$ residues and $d(\alpha,\beta)\geq 4$ for distinct $\alpha,\beta$ in the same block,
 \item $\Gamma$ is shadowable and $\Gamma_{/\mathcal{B}}$ is an orbit-quotient.
\end{enumerate}
\end{theorem}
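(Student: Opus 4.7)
The theorem is a compilation of five sufficient conditions, each to be established as a separate lemma in the indicated later section. The common target in every case is to verify that $\Gamma_{/\mathcal{B}}$, already known to be a pregeometry by the construction in Subsection~\ref{sec:prelim}, actually has a chamber for every maximal type set, which is the defining extra property of being a geometry.

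For part~(1), the plan is to argue by rank. Ranks at most $1$ are trivial, and in rank~$2$ one only needs a single incident pair of different types in the quotient, obtained by projecting any flag of $\Gamma$ and using the fact that $\pi_{/\mathcal{B}}$ preserves incidence because $\mathcal{B}$ is type-refining. In rank~$3$ the novelty is the three-type chamber, and again one projects a chamber of $\Gamma$; the type-refining hypothesis guarantees the three images have distinct types, and the definition of incidence in a quotient gives pairwise incidence of the images. Parts~(2) and~(3) are essentially formal: the \textsc{(FlagsLift)} hypothesis of~(2) directly transfers chambers of every type set from $\Gamma$ to $\Gamma_{/\mathcal{B}}$; and for~(3), the notion of cover developed in Section~\ref{sec:covers} will include an isomorphism between residues and hence induce a bijection on chambers.

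Parts~(4) and~(5) are the substantive ones. For~(4), I would lift a flag of $\Gamma_{/\mathcal{B}}$ element by element: surjectivity on corank~$1$ residues allows a lifted partial flag to be extended by one element at a time, and the hypothesis $d(\alpha,\beta)\geq 4$ for distinct $\alpha,\beta$ in the same block is designed precisely to rule out the only pathology that could obstruct this process, namely two elements of one block being linked by a short incidence path that would force them both into the extended lift. For~(5), the shadowability assumption (Section~\ref{sec:shadowable}) faithfully encodes $\Gamma$ in terms of shadows on a fixed type, and the orbit-quotient hypothesis means $\mathcal{B}$ comes from a group action that transports chambers between fibres coherently, so that chambers of $\Gamma$ project to chambers of $\Gamma_{/\mathcal{B}}$.

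I expect the main obstacle to be part~(4): the bookkeeping needed to combine the type-refining block structure, the combinatorial distance in the incidence graph, and the iterative flag-lifting is genuinely delicate, and the exact threshold $d(\alpha,\beta)\geq 4$ will have to be traced to a concrete short-path configuration that the inductive step must exclude. Once each of the five lemmas is in hand, the theorem itself is just their logical disjunction.
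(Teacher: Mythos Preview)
There is a recurring conceptual gap in your plan: to show $\Gamma_{/\mathcal{B}}$ is a geometry you must show that \emph{every} flag of $\Gamma_{/\mathcal{B}}$ is contained in a chamber, not merely that chambers exist. Projecting a chamber of $\Gamma$ always gives a chamber of $\Gamma_{/\mathcal{B}}$, for any type-refining $\mathcal{B}$; that step is free and proves nothing. The substance is the reverse direction: given an arbitrary flag $F_{\mathcal{B}}$ of the quotient, find a chamber containing it. In the paper this is done by \emph{lifting} $F_{\mathcal{B}}$ to a flag $F$ of $\Gamma$, extending $F$ to a chamber there, and projecting back. Your sketches for parts~(1), (2) and~(5) all speak of ``projecting a chamber of $\Gamma$'' or ``transporting chambers'', which is the trivial direction. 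For part~(1), the correct argument takes an arbitrary rank-$2$ flag $\{B,C\}$ of the quotient, observes that by the very definition of $*_{/\mathcal{B}}$ it lifts to some $\alpha*\beta$ in $\Gamma$, extends this to a rank-$3$ flag in $\Gamma$, and projects; this is why rank~$3$ is the threshold (rank-$2$ flags always lift, higher-rank flags need not). For part~(5), the orbit-quotient hypothesis is not used to ``transport chambers'' but, via a tree-placing argument, to align lifts: the shadow embedding gives a total order on types, and the group action lets one successively choose $\beta_{i_1}*\beta_{i_2}*\cdots*\beta_{i_l}$ along that order inside the prescribed blocks, after which the shadow containments force this chain to be an honest flag.

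Your plan for part~(4) also diverges from the paper and, as written, does not work. Extending a lifted partial flag by one element using surjectivity on corank-$1$ residues only gives incidence with \emph{one} previously chosen element, not with all of them, so the induction stalls at the third step. The paper instead shows that the two hypotheses together force $\pi_{/\mathcal{B}}$ to be a covering: surjectivity on corank-$1$ residues plus $d\geq 3$ gives a bijection on each $\Gamma_\alpha$, and the sharper $d\geq 4$ is exactly what is needed to show this bijection also reflects incidence (a putative incidence in the quotient not coming from $\Gamma_\alpha$ would produce a path of length~$3$ between two elements of one block). Once $\pi_{/\mathcal{B}}$ is a covering, part~(3) applies; and part~(3) itself is proved by showing that a covering satisfies \textsc{(FlagsLift)}, reducing to part~(2).
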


Example \ref{eg:notgeom} gives an example of a quotient $\Gamma_{/\mathcal{B}}$ of a rank 4 geometry $\Gamma$  for which \textsc{(FlagsLift)} does not hold, $\Gamma$ is not a cover of $\Gamma_{/\mathcal{B}}$ and $\Gamma_{/\mathcal{B}}$ is not a geometry (so the conditions in parts 1, 2 and 3 cannot be relaxed). Also Example \ref{D4example} gives a natural infinite family of rank 4 geometries 
arising from orthogonal geometry where the ({\sc FlagsLift}) condition 
does not hold for a certain orbit quotient. We see in Remark \ref{rem:coseteg} that even a normal quotient of a flag-transitive geometry need not be a geometry and even if it is a geometry it need not satisfy the \textsc{(FlagsLift)} condition. In
Constructions \ref{con:blowup} and \ref{con:liftshadowable} we give two different ways of lifting a
geometry to a larger geometry which has the initial geometry as a quotient.

In Section \ref{sec:props} we explore the geometric properties of connectivity and firmness to determine the impact of the quotienting operation. Section \ref{sec:diagram} uses the diagram of a geometry to deduce information about a quotient.

If a group $G$ acts as automorphisms of a pregeometry $\Gamma$ and preserves a type-refining partition then $G$ also induces automorphisms of the corresponding quotient. We have the following necessary and sufficient condition for Problem \ref{prob:2}.

\begin{theorem}
\label{thm:flagtrans}
 Suppose that $G$ is flag-transitive on a geometry $\Gamma$ and that $\mathcal{B}$ is a type-refining partition invariant under $G$. Then $G$ is flag-transitive on $\Gamma_{/\mathcal{B}}$ if and only if \textsc{(FlagsLift)} holds.
\end{theorem}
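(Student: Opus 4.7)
The plan is to prove both implications by leveraging the fact that, since $\mathcal{B}$ is $G$-invariant, the projection map $\pi_{/\mathcal{B}}$ intertwines the $G$-action on $\Gamma$ with the induced $G$-action on $\Gamma_{/\mathcal{B}}$, and that since $\mathcal{B}$ is type-refining, $\pi_{/\mathcal{B}}$ preserves types (so it sends flags of type $J$ to flags of type $J$).

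For the implication \textsc{(FlagsLift)} $\Rightarrow$ flag-transitivity on $\Gamma_{/\mathcal{B}}$, I would take two flags $\bar F_1,\bar F_2$ of the same type $J$ in $\Gamma_{/\mathcal{B}}$. By \textsc{(FlagsLift)}, each $\bar F_i$ is the projection of some flag $F_i$ of $\Gamma$; and since $\pi_{/\mathcal{B}}$ preserves types, each $F_i$ has type $J$ as well. Flag-transitivity of $G$ on $\Gamma$ then produces $g\in G$ with $g\cdot F_1 = F_2$, and applying $\pi_{/\mathcal{B}}$ on both sides, combined with $G$-equivariance of the projection, gives $g\cdot\bar F_1 = \bar F_2$, as required.

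For the converse, I would start from a flag $\bar F$ of type $J$ in $\Gamma_{/\mathcal{B}}$ and argue that it has a preimage flag in $\Gamma$. Since $\Gamma$ is a geometry, every flag is contained in a chamber, so pick any chamber $C$ in $\Gamma$; its $J$-subflag $C_J$ has type $J$, and $\pi_{/\mathcal{B}}(C_J)$ is a flag of type $J$ in $\Gamma_{/\mathcal{B}}$ (type-preservation plus the fact that projections of incident elements are incident blocks). By the assumed flag-transitivity of $G$ on $\Gamma_{/\mathcal{B}}$, there exists $g\in G$ with $g\cdot\pi_{/\mathcal{B}}(C_J)=\bar F$; by $G$-equivariance this equals $\pi_{/\mathcal{B}}(g\cdot C_J)$, so $g\cdot C_J$ is the desired flag of $\Gamma$ projecting onto $\bar F$. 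Hence \textsc{(FlagsLift)} holds.

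The main obstacle is conceptual rather than technical: one must be comfortable manipulating flag-transitivity at the level of a (pre)geometry that may not even satisfy the chamber axiom, and must verify cleanly that projection of a flag is a flag, that projection preserves types, and that $G$-equivariance is genuine (all of which follow from $\mathcal{B}$ being type-refining and $G$-invariant). Once these bookkeeping facts are in place, the two directions reduce to the short orbit-chasing arguments above; no lemma from the earlier sections of the paper is needed beyond the definitions recalled in Subsection~\ref{sec:prelim} and Subsection~\ref{sec:flagslift}.
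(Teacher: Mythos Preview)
Your proposal is correct and is essentially the same argument as the paper's: the paper proves a slightly more general statement (Theorem~\ref{thm:ftrans}) for pregeometries, type by type, assuming only that $J$-flags exist in $\Gamma$, and then specialises to geometries in Corollary~\ref{cor:flagliftft}. Your two directions match those of Theorem~\ref{thm:ftrans} verbatim, the only cosmetic difference being that you explicitly produce a $J$-flag of $\Gamma$ by taking the $J$-subflag of a chamber, whereas the paper simply invokes the existence of a $J$-flag.
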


The proof of  Theorem \ref{thm:flagtrans} can be found in Section~\ref{sec:gractions}.

Despite the fact that the class of geometries is not closed under quotients, if we widen our attention to the class of coset pregeometries (see Subsection \ref{sec:coset}) we obtain a class of pregeometies which is closed under quotients and contains all flag-transitive geometries.

\begin{theorem}
\label{thm:cosetquot}
Let $\Gamma$ be a coset pregeometry for a group $G$ and let $\mathcal{B}$ be a type-refining partition invariant under $G$. Then $\Gamma_{/\mathcal{B}}$ is a coset pregeometry.
\end{theorem}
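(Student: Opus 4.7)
The plan is to identify $\Gamma_{/\mathcal{B}}$ explicitly with a coset pregeometry for the same group $G$, with each original subgroup $G_i$ replaced by the setwise stabiliser in $G$ of a suitably chosen block of type $i$. Recall (Subsection~\ref{sec:coset}) that the coset pregeometry for $G$ with family $(G_i)_{i\in I}$ has as elements of type $i$ the left cosets of $G_i$ in $G$, with $G$ acting by left multiplication, and with two cosets incident precisely when they meet. So in $\Gamma$ the group $G$ is transitive on elements of each type.

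First I would use the $G$-invariance of $\mathcal{B}$ together with this transitivity to conclude that $G$ is transitive on the blocks of $\mathcal{B}$ of each fixed type $i$. For each $i\in I$ let $B_i$ denote the (unique) block of type $i$ containing the identity coset $G_i$, and set $H_i$ to be the setwise stabiliser of $B_i$ in $G$. Since $G_i$ fixes the element $G_i$ and thus preserves the block containing it, $G_i\le H_i$. The orbit--stabiliser theorem then provides a $G$-equivariant bijection $gH_i\leftrightarrow gB_i$ between the cosets of $H_i$ in $G$ and the blocks of type $i$. Unpacking this bijection gives the key description $B_i=\{gG_i:g\in H_i\}$, and more generally $gB_i=\{xG_i:x\in gH_i\}$, so each block, viewed as a union of cosets of $G_i$, is literally a coset of $H_i$.

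Next I would verify that incidence in $\Gamma_{/\mathcal{B}}$ transports to nonempty intersection of cosets of the $H_i$. By definition, $gB_i$ and $hB_j$ are incident in $\Gamma_{/\mathcal{B}}$ iff some $xG_i\in gB_i$ is incident in $\Gamma$ with some $yG_j\in hB_j$, which in the coset pregeometry $\Gamma$ means $xG_i\cap yG_j\ne\emptyset$. Any element witnessing this intersection lies in $gH_i\cap hH_j$; conversely, any element of $gH_i\cap hH_j$ lies in cosets $xG_i\subseteq gB_i$ and $yG_j\subseteq hB_j$ which meet. Hence $\Gamma_{/\mathcal{B}}$ coincides, type-for-type and incidence-for-incidence, with the coset pregeometry of $G$ with family $(H_i)_{i\in I}$, which is exactly what is to be shown.

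The only step that demands care is the identification of the elements of a block of type $i$ as a union of cosets of $G_i$ indexed by $H_i$, i.e.\ the passage from $B_i$ viewed set-theoretically in $\Gamma$ to $B_i$ viewed as a coset of $H_i$; once this is in place the incidence check is purely a matter of unfolding the definition of intersection of cosets. Because the partition is $G$-invariant this identification is automatic, so I do not anticipate a substantive obstacle beyond book-keeping.
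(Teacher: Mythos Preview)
Your argument is correct and gives a direct construction of $\Gamma_{/\mathcal{B}}$ as $\Gamma(G,\{H_i\}_{i\in I})$ with $H_i$ the setwise stabiliser of the block containing the identity coset. One small discrepancy: the paper uses \emph{right} cosets $G_ix$ with $G$ acting by right multiplication, not left cosets as you state; the argument goes through verbatim under either convention, so this is only a notational slip.

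The paper takes a different, less explicit route. It first proves a characterisation (Theorem~\ref{thm:cosetequiv}): a pregeometry is isomorphic to a coset pregeometry for some $G\leqslant\Aut(\Gamma)$ if and only if it contains a chamber and $G$ is vertex-transitive and incidence-transitive. Theorem~\ref{thm:cosetquot} is then a one-line corollary: the projection $\pi_{/\mathcal{B}}$ sends the chamber $\{G_i\}_{i\in I}$ to a chamber of $\Gamma_{/\mathcal{B}}$, vertex-transitivity of $G$ passes to the quotient trivially, and incidence-transitivity passes by Corollary~\ref{cor:inctrans}. Your approach is essentially the specialisation of the ``converse'' half of the proof of Theorem~\ref{thm:cosetequiv} to this situation, carried out by hand. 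What you gain is the explicit identification of the subgroups $H_i$ (useful if one wants to compute with the quotient); what the paper gains is modularity, since the characterisation theorem can be reused elsewhere.
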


Our results do not solve Problems \ref{prob:1} and \ref{prob:2} completely and several open questions are posed in the text. 

\section*{Acknowledgements}
The authors thank anonymous referees for advice that has led to a 
clearer exposition. Moreover they thank a referee of an earlier version 
  for pointing out several infelicities and providing several 
illuminating examples included in the text.

\section{Quotients and lifting flags}

\subsection{Definitions and preliminaries}
\label{sec:prelim}
A \emph{pregeometry} $\Gamma=(X,*,t)$ is a set $X$, whose members are
called \emph{elements}, with a symmetric reflexive relation $*$ and a
map $t$ from $X$ onto some set $I$ whose elements are called
\emph{types}. Moreover, $\alpha*\beta$ and $t(\alpha)=t(\beta)$
implies $\alpha=\beta$. The relation $*$ is the incidence relation and
if $\alpha*\beta$ we say that $\alpha$ and $\beta$ are
\emph{incident}. The \emph{rank} of $\Gamma$ is $|I|$. \emph{We will
  often just refer to a pregeometry $\Gamma$, in which case we take it
  to represent the triple $(X,*,t)$}, and similarly a pregeometry
$\Gamma'$ will mean the triple $(X',*',t')$. For each $i\in I$, we let
$X_i=t^{-1}(i)$, so that $X=\dotcup_{i\in I} X_i$. This partition of
$X$ is known as the \emph{type partition}.  

A \emph{flag} of $\Gamma$ is a set of pairwise incident elements. By
definition a flag contains at most one element of each type.  The
\emph{rank} of a flag $F$ is $|t(F)|$ while its \emph{corank} is
$|I|-|t(F)|$.  
We refer to $t(F)$ as the type of $F$ and $I\setminus t(F)$ as its
\emph{cotype}. A \emph{chamber} is a flag of rank $|I|$. We call
$\Gamma$ a \emph{geometry} if all maximal flags are chambers. We say
that a geometry is \emph{firm} if every corank 1 flag is contained in
at least two chambers. 

An \emph{automorphism} of $\Gamma$ is a permutation of $X$ which
preserves incidence and which fixes each $X_i=t^{-1}(i)$ setwise. The
set of all automorphisms is denoted $\Aut(\Gamma)$. 

The graph $(X,E)$, where $E$ is the set of all rank $2$ flags of
$\Gamma$, is called the \emph{incidence graph} of $\Gamma=(X,*,t)$. We
say that $\Gamma$ is \emph{connected} if its incidence graph is
connected.  

Let $\Gamma=(X,*,t)$ be a pregeometry and let $\mathcal{B}$ be a
partition of $X$ which is a refinement of the type partition of
$\Gamma$. We call such a partition a \emph{type-refining
  partition}. We define a new pregeometry
$\Gamma_{/\mathcal{B}}=(\mathcal{B},*_{/\mathcal{B}},t_{/\mathcal{B}})$
where  
\begin{enumerate}
\item $B_1 *_{/\mathcal{B}} B_2$ if and only if there exist $\alpha_i\in
  B_i$ (for $i=1,2$) with $\alpha_1*\alpha_2$, 
\item $t_{/\mathcal{B}}:\mathcal{B}\rightarrow I$ and
  $t_{/\mathcal{B}}(B)=t(\alpha)$ for $\alpha\in B$. 
\end{enumerate}
For every quotient $\Gamma_{/\mathcal{B}}$ in this paper, the
associated partition $\mathcal{B}$ will be type-refining. The quotient
$\Gamma_{/\mathcal{B}}$ yields a projection  
$\pi_{/\mathcal{B}}:\Gamma\rightarrow \Gamma_{/\mathcal{B}}$ (namely
$\pi_{/\mathcal{B}}(\alpha)$ is the part of $\mathcal{B}$ containing
$\alpha$) which is a pregeometry morphism from $\Gamma$ onto
$\Gamma_{/\mathcal{B}}$, that is a map from $X$ onto $\mathcal{B}$
which preserves incidence and type. However, nonincidence is not in general preserved.

If the partition $\mathcal{B}$ of $X$ is the set of orbits of some
subgroup $A$ of $\Aut(\Gamma)$ we denote $\Gamma_{/\mathcal{B}}$ by
$\Gamma_{/A}$ and call it an \emph{orbit-quotient}. Moreover, when
$A\norml \Aut(\Gamma)$, we call $\Gamma_{/A}$ a \emph{normal
  quotient}.  

Given a flag $F$ of a pregeometry $\Gamma$, the \emph{residue} of $F$
in $\Gamma$, denoted $\Gamma_F$, is the pregeometry $(X_F,*_F,t_F)$
induced by $\Gamma$ on the set of elements $X_F$ incident with every
member of $F$ and whose type is not in $t(F)$.  If $\Gamma$ is a
geometry then so is $\Gamma_F$. In what follows, a corank $1$ residue will be used as a synonym for the residue of a flag of rank $1$.

For a pregeometry $\Gamma$ with quotient $\Gamma_{/\mathcal{B}}$, the projection map $\pi_{/\mathcal{B}}$ induces a pregeometry homomorphism from the corank $1$ residue $\Gamma_{\alpha}$ to $(\Gamma_{/\mathcal{B}})_{\pi_{/\mathcal{B}}(\alpha)}$. This homomorphism may not be onto as it is easy to construct examples where there are elements of $\pi_{/\mathcal{B}}(\alpha)$ that are incident to elements contained in blocks that do not contain an element incident with $\alpha$. However, for orbit-quotients we have that $\pi_{/\mathcal{B}}$ is surjective on corank $1$ residues.
\begin{lemma}
 \label{lem:orbitressurj}
Let $\Gamma$ be a pregeometry with orbit-quotient $\Gamma_{/A}$. Then for each element $\alpha$ of $\Gamma$, $\pi_{/A}(\Gamma_{\alpha})=(\Gamma_{/A})_{\alpha^A}$.
\end{lemma}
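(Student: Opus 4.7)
The plan is to establish the equality by double inclusion. The forward inclusion $\pi_{/A}(\Gamma_\alpha) \subseteq (\Gamma_{/A})_{\alpha^A}$ is essentially automatic, since the paragraph preceding the lemma already records that $\pi_{/A}$ induces a pregeometry homomorphism from $\Gamma_\alpha$ into $(\Gamma_{/A})_{\alpha^A}$. Spelling it out: if $\beta \in \Gamma_\alpha$, then $\beta * \alpha$ and $t(\beta) \neq t(\alpha)$, so by definition of the quotient incidence, $\pi_{/A}(\beta) *_{/A} \alpha^A$, and $t_{/A}(\pi_{/A}(\beta)) = t(\beta) \neq t(\alpha) = t_{/A}(\alpha^A)$, placing $\pi_{/A}(\beta)$ in $(\Gamma_{/A})_{\alpha^A}$.

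The substantive direction is the reverse inclusion, and here I would use the fact that the blocks of the partition are genuine $A$-orbits (this is what distinguishes orbit-quotients from arbitrary type-refining quotients and is the reason the general homomorphism can fail to be surjective). Given $B \in (\Gamma_{/A})_{\alpha^A}$, unpacking the definition of $*_{/A}$ yields elements $\beta \in B$ and $\gamma \in \alpha^A$ with $\beta * \gamma$. Writing $\gamma = \alpha^g$ for some $g \in A$ and applying the automorphism $g^{-1}$ to this incidence produces $\beta^{g^{-1}} * \alpha$. Since automorphisms preserve the type partition, $t(\beta^{g^{-1}}) = t(\beta) \neq t(\alpha)$, so $\beta^{g^{-1}} \in \Gamma_\alpha$. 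Moreover, $\beta^{g^{-1}}$ lies in the same $A$-orbit as $\beta$, namely $B$, so $\pi_{/A}(\beta^{g^{-1}}) = B$, witnessing $B \in \pi_{/A}(\Gamma_\alpha)$.

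There is no real obstacle here; the only point to be careful about is that it is precisely the orbit structure of $\mathcal{B}$ that lets us translate an arbitrary incidence witnessing $B *_{/A} \alpha^A$ back to an incidence involving $\alpha$ itself. For a general type-refining partition this move is unavailable, which is why the surjectivity on corank $1$ residues is a special feature of orbit-quotients rather than of all quotients.
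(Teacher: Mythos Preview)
Your proof is correct and follows essentially the same approach as the paper's: both establish the nontrivial inclusion by taking a witness $\beta * \alpha'$ with $\alpha' \in \alpha^A$, then using an element of $A$ to carry $\alpha'$ to $\alpha$ and thereby transport $\beta$ into $\Gamma_\alpha$ without changing its $A$-orbit. The only differences are cosmetic (you write $\gamma = \alpha^g$ and apply $g^{-1}$, the paper writes $\alpha' = \alpha^{a^{-1}}$ implicitly and applies $a$), and you spell out the forward inclusion and the type check, which the paper leaves implicit.
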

\begin{proof}
Let $B\in(\Gamma_{/A})_{\alpha^A}$. Then there exist $\beta\in B$ and $\alpha'\in \alpha^A$ such that $\beta*\alpha'$. Also there exists $a\in A$ such that $(\alpha')^a=\alpha$ and hence $\beta^a\in B\cap\Gamma_{\alpha}$. Thus $\pi_{/A}(\beta^a)=B$ and so $\pi_{/A}(\Gamma_{\alpha})=(\Gamma_{/A})_{\alpha^A}$.
\end{proof}

\subsection{Lifting flags}
\label{sec:flagslift}

Clearly given a flag $F$ of $\Gamma$, $\pi_{/\mathcal{B}}(F)$ is a
flag of $\Gamma_{/\mathcal{B}}$. However, the converse is not true in
general, that is, given a flag $F_{\mathcal{B}}$ in
$\Gamma_{/\mathcal{B}}$ there may not be a flag in $\Gamma$ which
projects onto $F_{\mathcal{B}}$. This is illustrated in the example
below. 

\begin{example}
\label{eg:6cyc}
Let $\Gamma$ be the rank $3$ geometry whose incidence graph is given
on the left of Figure \ref{fig:6cyc} such that the type partition is
given by the ellipses. Let $\mathcal{B}$ be the partition given by the
rectangular boxes. Then $\Gamma_{/\mathcal{B}}$ is a geometry and its
incidence graph is given on the right, again with the type partition
given by the ellipses. The rank 3 flag $\{A,B,C\}$ in
$\Gamma_{/\mathcal{B}}$ does not arise from a flag in $\Gamma$. Note
that $\la (\alpha,\beta)(\gamma,\delta)(\epsilon,\lambda)\ra$ is a
group of automorphisms of $\Gamma$ with orbits the parts of
$\mathcal{B}$. Hence $\Gamma_{/\mathcal{B}}$ is in fact an
orbit-quotient. The full automorphism group of $\Gamma$ is isomorphic to $S_3$.
\end{example}

\psfrag{A}{$A$}\psfrag{B}{$B$}\psfrag{C}{$C$}
\psfrag{alpha}{$\alpha$}\psfrag{beta}{$\beta$}
\psfrag{gamma}{$\gamma$}\psfrag{delta}{$\delta$}
\psfrag{epsilon}{$\varepsilon$}\psfrag{lambda}{$\lambda$}

\begin{figure}
\begin{center}
\includegraphics[width=0.45\textwidth]{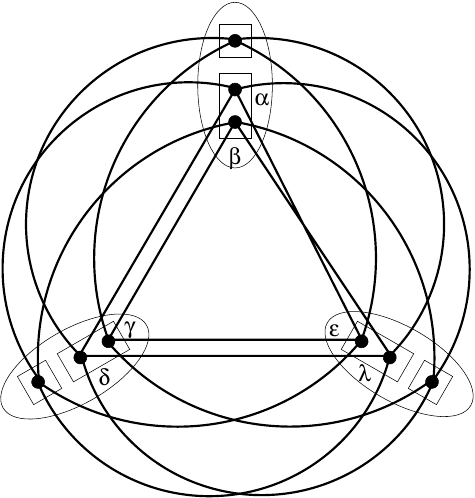}\quad\quad
\includegraphics[height=6cm,width=0.45\textwidth]{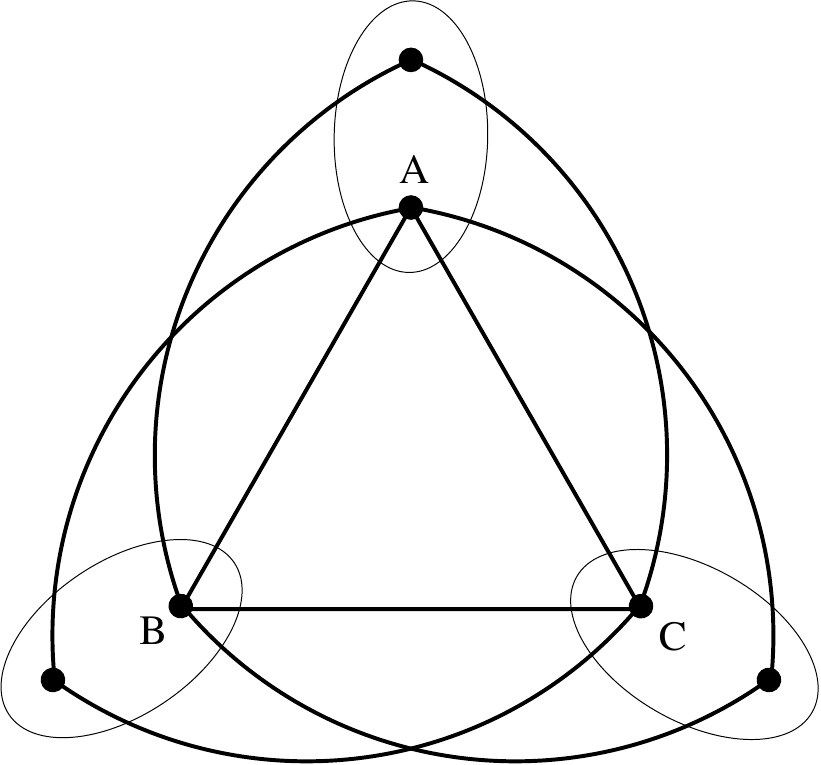}
\caption{A rank 3 geometry (on the left) whose quotient (on the
  right) has chambers which do not lift.} 
\label{fig:6cyc}
\end{center}
\end{figure}

This illustrates an issue at the heart of Problem \ref{prob:1}. The
failure of flags in a quotient to lift naturally to flags in the
original geometry may prevent a quotient of a geometry being a
geometry, since such a flag in a quotient may not be contained in a
chamber. To overcome these difficultes we introduce the following condition

\begin{quote}
\textsc{(FlagsLift)} For each flag $F_{\mathcal{B}}$ of
$\Gamma_{/\mathcal{B}}$ there exists a flag $F$ of $\Gamma$ such that
\mbox{$\pi_{/\mathcal{B}}(F)=F_{\mathcal{B}}$.} 
\end{quote}
This condition is weaker than the conditions of Tits and Pasini
discussed further in Section~\ref{sec:survey}. 

The problem of flags failing to lift does not prevent the quotient being a geometry for the example in Figure \ref{fig:6cyc}, as the only flags that do not lift are chambers. In
fact quotients of geometries of rank at most 3 are always
geometries. 

\begin{lemma}
\label{lem:rank3}
Let $\Gamma$ be a geometry of rank at most 3 and let $\mathcal{B}$
be a type-refining partition. Then $\Gamma_{/\mathcal{B}}$ is a
geometry. 
\end{lemma}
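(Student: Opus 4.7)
The plan is a short case analysis on the rank $n=|I|\leq 3$. The case $n=1$ is immediate since every nonempty flag has rank $1$, so every maximal flag is a chamber. For $n\in\{2,3\}$ my strategy is uniform: take an arbitrary maximal flag $F_{\mathcal{B}}$ of $\Gamma_{/\mathcal{B}}$, lift it to a flag $F$ of $\Gamma$ of the same rank, extend $F$ to a chamber $C$ of $\Gamma$ using the hypothesis that $\Gamma$ is a geometry, and then project $C$ down to obtain a chamber of $\Gamma_{/\mathcal{B}}$ containing $F_{\mathcal{B}}$.

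The crucial observation that makes this work is that every flag of $\Gamma_{/\mathcal{B}}$ of rank at most $2$ lifts to a flag of $\Gamma$ of the same rank. Indeed, a singleton $\{B\}$ lifts by choosing any element $\alpha\in B$, and a rank-$2$ flag $\{B_1,B_2\}$ lifts because the incidence $B_1 *_{/\mathcal{B}} B_2$ in the quotient is defined precisely by the existence of $\alpha_i\in B_i$ with $\alpha_1*\alpha_2$, giving the desired flag $\{\alpha_1,\alpha_2\}$ of $\Gamma$. Since we are working in rank at most $3$, the maximal flag $F_{\mathcal{B}}$ has rank at most $3$; if its rank is already $n$ it is a chamber and we are done, otherwise its rank is at most $2$ and the above lifting applies.

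Having produced $F$ with $\pi_{/\mathcal{B}}(F)=F_{\mathcal{B}}$, I would extend $F$ to a chamber $C$ of $\Gamma$ (possible since $\Gamma$ is a geometry) and note that $\pi_{/\mathcal{B}}$ preserves type and incidence, so $\pi_{/\mathcal{B}}(C)$ is a flag of $\Gamma_{/\mathcal{B}}$ of rank $n$, that is, a chamber. As $F_{\mathcal{B}}\subseteq \pi_{/\mathcal{B}}(C)$ and $F_{\mathcal{B}}$ is maximal, we conclude $F_{\mathcal{B}}=\pi_{/\mathcal{B}}(C)$ is a chamber, as required.

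There is no serious obstacle to this argument; the whole point is that the definition of the incidence relation in $\Gamma_{/\mathcal{B}}$ makes rank-$2$ flags lift automatically, so one never needs to ``assemble'' a flag out of pairwise-incident pieces that fail to be mutually incident at a single point of $\Gamma$. At rank $4$ the same scheme would require rank-$3$ flags in the quotient to lift, which is not guaranteed; precisely this kind of failure is visible for the triple $\{A,B,C\}$ in Example \ref{eg:6cyc} and is what is exploited in Example \ref{eg:notgeom} to show that the bound $n\leq 3$ in the lemma is sharp.
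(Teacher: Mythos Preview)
Your proof is correct and follows essentially the same approach as the paper's: both arguments exploit that rank-$1$ and rank-$2$ flags of $\Gamma_{/\mathcal{B}}$ lift to $\Gamma$ by definition, extend the lift to a chamber in $\Gamma$, and project back. Your presentation is slightly more uniform (handling $n=2$ and $n=3$ together via the maximal-flag formulation), but the mathematical content is identical.
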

\begin{proof}
If $\Gamma$ has rank 1 then $\Gamma_{/\mathcal{B}}$ is trivially a
geometry. So suppose $\Gamma$ has rank at least 2, and let
$B\in\mathcal{B}$ and $\alpha\in B$. Since $\Gamma$ is a geometry,
there exists $\beta\in\Gamma$ with $\alpha*\beta$. Let $C$ be the
unique element of $\mathcal{B}$ containing $\beta$. Then $\{B,C\}$ is
a flag of $\Gamma_{/\mathcal{B}}$. If $\Gamma$ has rank 2 it follows
that $\Gamma_{/\mathcal{B}}$ is a geometry. Suppose now that $\Gamma$
has rank 3 and suppose that $\{B,C\}$ is an arbitrary rank 2 flag
of $\Gamma_{/\mathcal{B}}$. By definition, there exist $\alpha\in B$
and $\beta\in C$ such that $\alpha*\beta$. Since $\Gamma$ is a
geometry, there exists $\gamma$ incident with $\alpha$ and
$\beta$. Then if $D\in\mathcal{B}$ is the unique subset containing
$\gamma$, it follows that $\{B,C,D\}$ is a chamber. Hence any flag of
$\Gamma_{/\mathcal{B}}$ is contained in a chamber and so
$\Gamma_{/\mathcal{B}}$ is also a geometry in the rank 3 case. 
\end{proof}

For larger rank it is no longer the case in general that the quotient of a geometry is a geometry. 

\begin{example}
\label{eg:notgeom}
Let $\Gamma$ be the rank 4 geometry with incidence graph given on
the left of Figure \ref{fig:rank4geom} whose parts of the type
partition are the three ellipses plus the set consisting of the three
remaining points. Let $\mathcal{B}$ be the type-refining partition
whose blocks are given by the rectangular boxes and each of the three
remaining points.  Note that $\la
(\alpha,\beta)(\gamma,\delta)(\epsilon,\lambda)\ra$ is a group of
automorphisms of $\Gamma$ whose orbits are the parts of
$\mathcal{B}$. Hence $\Gamma_{/\mathcal{B}}$ is an orbit-quotient. 
The incidence graph of $\Gamma_{/\mathcal{B}}$ is given on the right.
From this we can see that the flag $\{A,B,C\}$ is not contained in a
chamber and so $\Gamma_{/\mathcal{B}}$ is not a geometry. Note that
$\{A,B,C\}$ does not arise from a flag of $\Gamma$. 
\end{example}

\begin{figure}
\begin{center}
\includegraphics[width=0.45\textwidth,height=8cm]{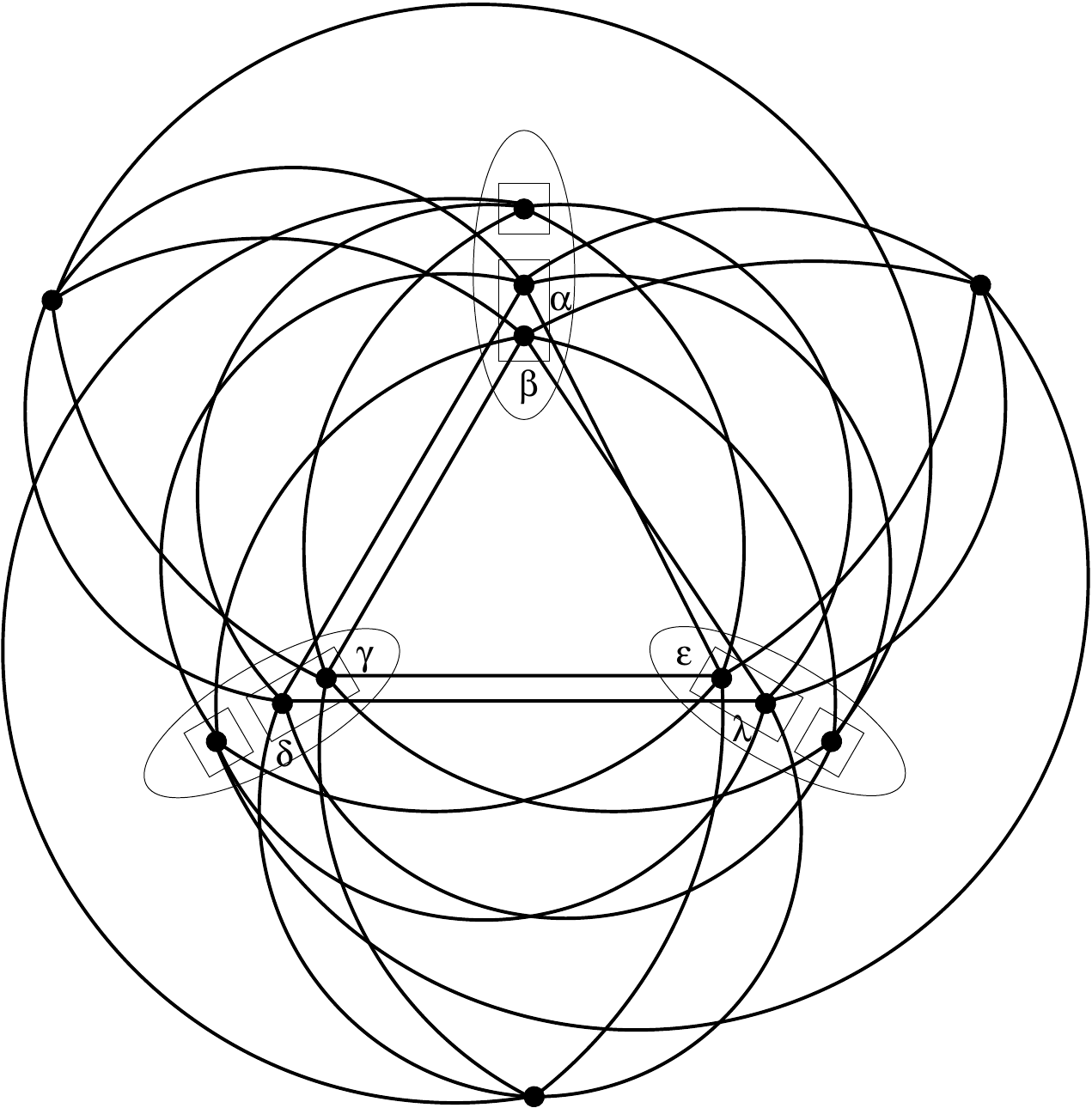}\quad\quad
\includegraphics[width=0.45\textwidth,height=8cm]{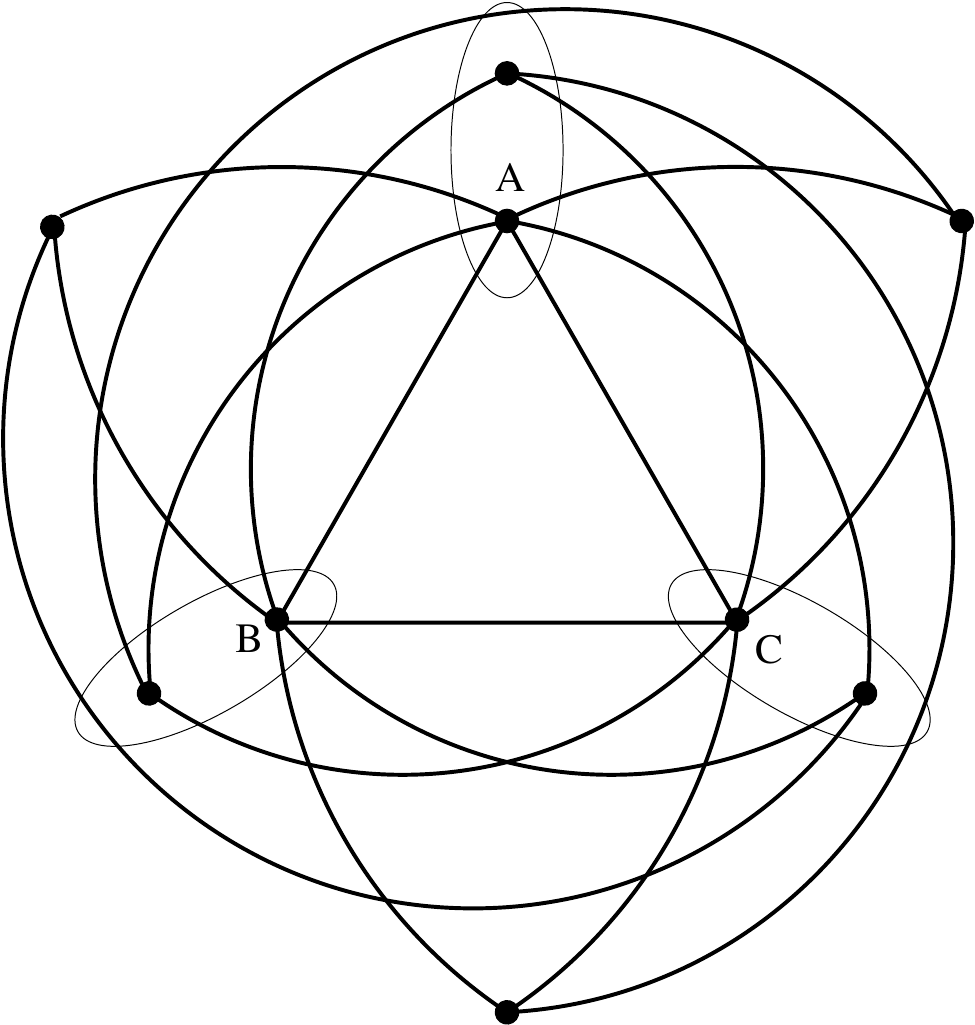}
\caption{A rank 4 geometry (on the left) whose quotient (on the
  right) is not a geometry} 
\label{fig:rank4geom}
\end{center}
\end{figure}

The condition \textsc{(FlagsLift)} is sufficient to guarantee that the quotient of a geometry is a
geometry. 

\begin{lemma}
\label{lem:flagliftgeom}
Let $\Gamma$ be a geometry and let $\mathcal{B}$ be a type-refining
partition. If \textsc{(FlagsLift)} holds then $\Gamma_{/\mathcal{B}}$
is a geometry. 
\end{lemma}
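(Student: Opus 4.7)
The goal is to show that every maximal flag of $\Gamma_{/\mathcal{B}}$ is a chamber, i.e.\ has rank $|I|$. The plan is to lift a given maximal flag in the quotient to a flag in $\Gamma$, extend that to a chamber using the fact that $\Gamma$ is a geometry, and then project back down.

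More precisely, let $F_{\mathcal{B}}$ be a maximal flag of $\Gamma_{/\mathcal{B}}$. By \textsc{(FlagsLift)} there exists a flag $F$ of $\Gamma$ with $\pi_{/\mathcal{B}}(F)=F_{\mathcal{B}}$. Since $\Gamma$ is a geometry, $F$ is contained in some chamber $C$ of $\Gamma$, so $|C|=|I|$ with exactly one element of each type. Now the projection $\pi_{/\mathcal{B}}$ is a pregeometry morphism, so the image $\pi_{/\mathcal{B}}(C)$ is a set of pairwise incident blocks of $\mathcal{B}$, i.e.\ a flag of $\Gamma_{/\mathcal{B}}$. Because $\pi_{/\mathcal{B}}$ preserves types and $C$ contains an element of each of the $|I|$ types, the images lie in $|I|$ distinct blocks (distinct types force distinct blocks), so $|\pi_{/\mathcal{B}}(C)|=|I|$, meaning $\pi_{/\mathcal{B}}(C)$ is a chamber of $\Gamma_{/\mathcal{B}}$.

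Clearly $F_{\mathcal{B}}=\pi_{/\mathcal{B}}(F)\subseteq \pi_{/\mathcal{B}}(C)$, and $\pi_{/\mathcal{B}}(C)$ is a flag of $\Gamma_{/\mathcal{B}}$. By maximality of $F_{\mathcal{B}}$ we conclude $F_{\mathcal{B}}=\pi_{/\mathcal{B}}(C)$, whence $F_{\mathcal{B}}$ has rank $|I|$ and is a chamber. Since every maximal flag of $\Gamma_{/\mathcal{B}}$ is a chamber, $\Gamma_{/\mathcal{B}}$ is a geometry.

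The argument is essentially a one-line diagram chase once \textsc{(FlagsLift)} is invoked; there is no real obstacle. The only subtlety worth flagging is the type-injectivity of $\pi_{/\mathcal{B}}$ on $C$, which follows immediately because $\mathcal{B}$ is type-refining (so each block has a well-defined type and blocks of distinct types are distinct). Note also that we never need to lift $F_{\mathcal{B}}$ to a flag of \emph{maximal} rank in $\Gamma$: lifting any flag projecting onto $F_{\mathcal{B}}$ suffices, because the chamber extension happens upstairs where we are guaranteed to be in a genuine geometry.
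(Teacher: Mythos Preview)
Your proof is correct and follows essentially the same approach as the paper: lift the flag via \textsc{(FlagsLift)}, extend to a chamber in $\Gamma$, and project back down to obtain a chamber of $\Gamma_{/\mathcal{B}}$ containing the original flag. The only cosmetic difference is that you phrase the conclusion in terms of maximal flags being chambers, whereas the paper shows every flag is contained in a chamber; these are equivalent, and your added remarks on type-injectivity simply make explicit what the paper leaves implicit.
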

\begin{proof}
Let $F_{\mathcal{B}}$ be a flag in  $\Gamma_{/\mathcal{B}}$. Then
there exists a flag $F$ of $\Gamma$ which projects onto
$F_{\mathcal{B}}$. Since $\Gamma$ is a geometry, there exists a
chamber $F'$ of $\Gamma$ containing $F$. Then $\pi_{/\mathcal{B}}(F')$
is a chamber of $\Gamma_{/\mathcal{B}}$ containing $F_{\mathcal{B}}$
and so $\Gamma_{/\mathcal{B}}$ is a geometry. 
\end{proof}

\section{Covers}
\label{sec:covers}

For two pregeometries $\Gamma$, $\Gamma'$ with the same type set $I$,
we say that a morphism (that is, an incidence and type preserving
map)  $h:\Gamma\rightarrow \Gamma'$ is an \emph{$m$-covering} if for
each flag $F$ of $\Gamma$ of corank $m$, the restriction of $h$ to the
residue $\Gamma_F$ is an isomorphism onto the residue
$\Gamma'_{h(F)}$. In this case we say that $\Gamma$ is an
\emph{$m$-cover} of $\Gamma'$. An $m$-cover is also a $k$-cover whenever
$0<k<m$. When $m=|I|-1$, that is for each
element $\alpha$ of $\Gamma$, the restriction of $h$ to the
residue $\Gamma_\alpha$ is an isomorphism onto the residue
$\Gamma'_{h(\alpha)}$, we say that $h$ is a \emph{covering} and $\Gamma$ is a
\emph{cover} of $\Gamma'$. 

Properties such as \textsc{(FlagsLift)} hold for covers and in
particular, being a cover is sufficient for a positive solution to
Problems \ref{prob:1} and \ref{prob:2} (see also Theorem
\ref{thm:flagtrans}). 

\begin{lemma}
\label{lem:coveringflags}
Suppose that, for a pregeometry $\Gamma$, $\pi_{/\mathcal{B}}$ is a
covering and let $F_{\mathcal{B}}$ be a flag of
$\Gamma_{/\mathcal{B}}$. Then \textsc{(FlagsLift)} holds, that is,
there exists a flag $F$ of $\Gamma$ such that
$F_{\mathcal{B}}=\pi_{/\mathcal{B}}(F)$. 
\end{lemma}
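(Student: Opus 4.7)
The plan is to lift the flag $F_{\mathcal{B}}$ of $\Gamma_{/\mathcal{B}}$ one element at a time, using the covering property applied to a single chosen element. The key observation is that the defining property of a covering — that $\pi_{/\mathcal{B}}$ restricts to an \emph{isomorphism} of residues $\Gamma_\alpha\to (\Gamma_{/\mathcal{B}})_{\pi_{/\mathcal{B}}(\alpha)}$ — immediately allows us to pull back flags in a corank-$1$ residue of $\Gamma_{/\mathcal{B}}$ to flags in the corresponding residue of $\Gamma$.

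First I would dispose of the trivial cases: if $F_{\mathcal{B}}=\emptyset$ then the empty flag of $\Gamma$ projects to it, and if $F_{\mathcal{B}}=\{B\}$ is a single block, any $\alpha\in B$ gives a rank $1$ flag mapping to $F_{\mathcal{B}}$. So assume $F_{\mathcal{B}}=\{B_1,B_2,\ldots,B_k\}$ with $k\geq 2$. Pick any element $\alpha_1\in B_1$, so that $\pi_{/\mathcal{B}}(\alpha_1)=B_1$. Because $F_{\mathcal{B}}$ is a flag, the blocks $B_2,\ldots,B_k$ are pairwise incident and each is incident with $B_1$, so $\{B_2,\ldots,B_k\}$ is a flag of the residue $(\Gamma_{/\mathcal{B}})_{B_1}$.

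Next I would invoke the covering hypothesis at $\alpha_1$: the map $\pi_{/\mathcal{B}}$ restricts to an isomorphism $\Gamma_{\alpha_1}\to (\Gamma_{/\mathcal{B}})_{B_1}$. Since isomorphisms of pregeometries send flags bijectively to flags, the preimage $\{\alpha_2,\ldots,\alpha_k\}$ of $\{B_2,\ldots,B_k\}$ under this isomorphism is a flag of $\Gamma_{\alpha_1}$, with $\pi_{/\mathcal{B}}(\alpha_i)=B_i$ for $i=2,\ldots,k$. By definition of residue each $\alpha_i$ is incident with $\alpha_1$, and the $\alpha_i$ for $i\geq 2$ are pairwise incident. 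Hence $F:=\{\alpha_1,\alpha_2,\ldots,\alpha_k\}$ is a flag of $\Gamma$ with $\pi_{/\mathcal{B}}(F)=F_{\mathcal{B}}$, establishing \textsc{(FlagsLift)}.

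There is essentially no hard step here: the work is already done by the covering assumption, which supplies the required isomorphism on residues. The only point requiring a little care is checking that the lifts $\alpha_2,\ldots,\alpha_k$ obtained inside $\Gamma_{\alpha_1}$ together with $\alpha_1$ form an honest flag of $\Gamma$, which is automatic from the definition of residue and the fact that isomorphisms preserve the flag property.
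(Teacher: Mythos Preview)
Your proof is correct and follows essentially the same approach as the paper's: both pick a block $B$ of $F_{\mathcal{B}}$ and an element $\alpha\in B$, observe that $F_{\mathcal{B}}\setminus\{B\}$ is a flag of the residue $(\Gamma_{/\mathcal{B}})_B$, and use the covering isomorphism $\Gamma_\alpha\cong(\Gamma_{/\mathcal{B}})_B$ to pull it back to a flag $F'$ of $\Gamma_\alpha$, so that $F'\cup\{\alpha\}$ is the required lift. The only cosmetic difference is that you name the elements $\alpha_1,\ldots,\alpha_k$ individually, whereas the paper works with the set $F'$ directly.
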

\begin{proof}
If $F_{\mathcal{B}}$ has rank 1 then $F_{\mathcal{B}}=\{B\}$ for
some $B\in\mathcal{B}$ and $\alpha\in B$ yields the required flag of
$\Gamma$.  Thus suppose that the rank of $F_{\mathcal{B}}$ is at least
2. Let $B\in F_{\mathcal{B}}$. Then $F_{\mathcal{B}}\setminus \{B\}$
is a flag of rank at least 1 in the residue
$(\Gamma_{/\mathcal{B}})_B$. Let $\alpha\in B$. Then as
$\pi_{/\mathcal{B}}$ is a covering it induces an isomorphism from
$\Gamma_\alpha$ onto $(\Gamma_{/\mathcal{B}})_B$. Hence there exists a
flag $F'$ in $\Gamma_{\alpha}$ such that
$\pi_{/\mathcal{B}}(F')=F_{\mathcal{B}}\backslash \{B\}$. Moreover,
$F'\cup\{\alpha\}$ is also a flag of $\Gamma$ and
$\pi_{/\mathcal{B}}(F'\cup\{\alpha\})=F_{\mathcal{B}}$. Thus
$F=F'\cup\{\alpha\}$ has the required property. 
\end{proof}

\begin{lemma}
\label{lem:covgivesgeom}
If $\Gamma$ is a geometry of rank at least~2 and $\pi_{/\mathcal{B}}$
is a covering, then 
$\Gamma_{/\mathcal{B}}$ is a geometry. Moreover, if $\Gamma$ is firm
then so is $\Gamma_{/\mathcal{B}}$.  
\end{lemma}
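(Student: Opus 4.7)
My plan has two parts corresponding to the two assertions in the lemma.

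For the first statement, that $\Gamma_{/\mathcal{B}}$ is a geometry, I would simply chain the two preceding lemmas. Since $\pi_{/\mathcal{B}}$ is a covering, Lemma \ref{lem:coveringflags} tells us that \textsc{(FlagsLift)} holds for $\Gamma$, and then Lemma \ref{lem:flagliftgeom} immediately gives that $\Gamma_{/\mathcal{B}}$ is a geometry. No additional work is needed here, and this part does not even use rank at least $2$.

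For firmness, my plan is to start with an arbitrary corank $1$ flag $F_{\mathcal{B}}$ of $\Gamma_{/\mathcal{B}}$ and produce two distinct chambers containing it. Using \textsc{(FlagsLift)}, I would lift $F_{\mathcal{B}}$ to a flag $F$ of $\Gamma$; since the projection preserves types, $F$ has the same rank as $F_{\mathcal{B}}$ and hence is itself of corank $1$. As $\Gamma$ is firm, $F$ is contained in at least two distinct chambers, say $F\cup\{\gamma_1\}$ and $F\cup\{\gamma_2\}$ with $\gamma_1\ne \gamma_2$ and $t(\gamma_1)=t(\gamma_2)\notin t(F)$. Projecting through $\pi_{/\mathcal{B}}$ then yields flags $F_{\mathcal{B}}\cup\{\pi_{/\mathcal{B}}(\gamma_i)\}$ which, as images of chambers, have full type set $I$; so if they are distinct, they are the two chambers we need.

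The only real obstacle is to ensure that $\pi_{/\mathcal{B}}(\gamma_1)\ne \pi_{/\mathcal{B}}(\gamma_2)$, since nothing a priori prevents two different elements of $\Gamma$ from lying in the same block of $\mathcal{B}$. This is exactly where the rank at least $2$ hypothesis and the full strength of being a covering (as opposed to a weaker $m$-covering) intervene: because the rank is at least $2$, the flag $F$ is non-empty and I can fix some $\alpha\in F$, and because $\gamma_1$ and $\gamma_2$ are incident with every element of $F$ (including $\alpha$) and have type different from $t(\alpha)$, both lie in the residue $\Gamma_\alpha$. The covering hypothesis says that $\pi_{/\mathcal{B}}$ restricted to $\Gamma_\alpha$ is an isomorphism onto $(\Gamma_{/\mathcal{B}})_{\pi_{/\mathcal{B}}(\alpha)}$, and in particular injective on $\Gamma_\alpha$, so $\pi_{/\mathcal{B}}(\gamma_1)\ne\pi_{/\mathcal{B}}(\gamma_2)$, as required. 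This completes the firmness argument.
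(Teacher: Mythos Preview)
Your proposal is correct and follows essentially the same approach as the paper. The only minor variant is in the firmness argument: the paper applies the covering property to the corank~$1$ flag $F$ itself (using that an $(|I|-1)$-cover is also a $1$-cover) to get injectivity on $\Gamma_F$, whereas you pick a single $\alpha\in F$ and use injectivity on the larger residue $\Gamma_\alpha$ directly from the definition; both yield $\pi_{/\mathcal{B}}(\gamma_1)\ne\pi_{/\mathcal{B}}(\gamma_2)$ immediately, and your version makes the role of the rank~$\geq 2$ hypothesis more explicit.
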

\begin{proof}
By Lemma \ref{lem:coveringflags} and Lemma \ref{lem:flagliftgeom},
$\Gamma_{/\mathcal{B}}$ is a geometry. Suppose now that $\Gamma$ is
firm and let $F_{\mathcal{B}}$ be a flag of corank~1 in
$\Gamma_{/\mathcal{B}}$. Then by Lemma \ref{lem:coveringflags}, there
exists a flag $F$ of $\Gamma$ such that
$\pi_{/\mathcal{B}}(F)=F_{\mathcal{B}}$. Since $\Gamma$ is firm, $F$
is contained in two distinct chambers $F',F''$. Now $\pi_{/\mathcal{B}}(F')$
and $\pi_{/\mathcal{B}}(F'')$ are chambers of $\Gamma_{/\mathcal{B}}$ containing
$F_{\mathcal{B}}$. Since $\pi_{/\mathcal{B}}$ is a covering, it
induces an isomorphism from the residue $\Gamma_F$ onto
$(\Gamma_{/\mathcal{B}})_{F_{\mathcal{B}}}$.  Since $F'\setminus F$
and $F''\setminus F$ are distinct elements of $\Gamma_F$ it follows that
$\pi_{/\mathcal{B}}(F')\neq \pi_{/\mathcal{B}}(F'')$ and so
$F_{\mathcal{B}}$ is also contained in two chambers. Hence
$\Gamma_{/\mathcal{B}}$ is firm. 
\end{proof}

In the case of orbit-quotients, the property of being a cover
restricts the action of the group. 
\begin{lemma}
\label{lem:semireg}
Let $\Gamma$ be a connected pregeometry and let
$A\leqslant\Aut(\Gamma)$. If $\Gamma$ is a cover of the orbit-quotient
$\Gamma_{/A}$ then $A$ acts semiregularly on $X$. 
\end{lemma}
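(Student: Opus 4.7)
The plan is to show that the stabilizer $A_\alpha$ is trivial for every element $\alpha \in X$, which is exactly the semiregularity condition. The two ingredients to combine are: (i) the fact that $\pi_{/A}$ restricted to any residue $\Gamma_\alpha$ is an \emph{isomorphism} (so, in particular, injective) onto $(\Gamma_{/A})_{\alpha^A}$, and (ii) the connectedness of $\Gamma$.

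First I would pick $\alpha \in X$ and $a \in A_\alpha$, and show that $a$ fixes every neighbour of $\alpha$ in the incidence graph. Indeed, if $\beta * \alpha$, then $\beta^a * \alpha^a = \alpha$, so both $\beta$ and $\beta^a$ lie in the residue $\Gamma_\alpha$. Since $\pi_{/A}(\beta^a) = \beta^A = \pi_{/A}(\beta)$ and $\pi_{/A}$ restricted to $\Gamma_\alpha$ is injective (being an isomorphism onto $(\Gamma_{/A})_{\alpha^A}$ because $\Gamma$ is a cover of $\Gamma_{/A}$), we conclude $\beta^a = \beta$.

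Next I would propagate this by induction on the distance $d(\alpha,\gamma)$ in the incidence graph. Having shown that $a$ fixes every element at distance $\leq n$, given $\gamma$ at distance $n+1$ from $\alpha$ pick $\delta$ at distance $n$ with $\gamma * \delta$. Then $a$ fixes $\delta$ by induction, and applying the previous paragraph with $\delta$ in place of $\alpha$ shows $a$ fixes $\gamma$. Since $\Gamma$ is connected, every element of $X$ lies at finite distance from $\alpha$, so $a$ fixes $X$ pointwise, i.e., $a = 1$. Hence $A_\alpha = 1$ for every $\alpha$, so $A$ acts semiregularly.

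I don't expect a serious obstacle: the whole argument is driven by the injectivity of $\pi_{/A}$ on each corank-$1$ residue, which is exactly what the cover hypothesis hands us, together with connectivity to spread the fixing from a single element to all of $X$. The only thing one has to be mildly careful about is noticing that $\beta^a$ automatically lies in $\Gamma_\alpha$ (which uses $\alpha^a = \alpha$) so that the isomorphism on $\Gamma_\alpha$ can be applied.
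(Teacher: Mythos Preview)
Your proposal is correct and follows essentially the same approach as the paper: both arguments use the injectivity of $\pi_{/A}$ on $\Gamma_\alpha$ (coming from the cover hypothesis) to show that $A_\alpha$ fixes $\Gamma_\alpha$ pointwise, and then propagate this along paths via connectedness. The only cosmetic difference is that the paper phrases the injectivity as ``for each $C *_{/A} B$ there is a unique $\gamma \in C$ with $\gamma * \alpha$'' rather than invoking the isomorphism directly, but the content is identical.
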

\begin{proof}
Let $\alpha\in X$ and let $B=\alpha^A$.  Let  $g\in A_\alpha$. Then
$g$ fixes $\Gamma_\alpha$ setwise. Since $\Gamma$ is a cover of
$\Gamma_{/A}$, $\Gamma_\alpha\cong (\Gamma_{/A})_B$ and hence for each
$C*_{/\mathcal{B}}B$, there is a unique element $\gamma\in C$ such
that $\gamma*\alpha$. Since $g$ fixes $\alpha$ and $C$ setwise, $\gamma^g\in C$ and by uniqueness $\gamma^g=\gamma$. So $g$ fixes $\Gamma_\alpha$ pointwise.  Let
$\beta\in X$. Since $\Gamma$ is connected there exist elements
$\alpha=\alpha_0,\alpha_1,\ldots,\alpha_k=\beta$  such that
$\alpha_0*\alpha_1*\cdots*\alpha_k$. Now $\alpha_1\in\Gamma_\alpha$
and so $g\in A_{\alpha_1}$. Thus $g$ fixes $\Gamma_{\alpha_1}$
pointwise and hence fixes $\alpha_2$. It follows inductively that $g$
fixes $\alpha_k=\beta$ and so $g$ fixes each element of $X$. Thus
$g=1$ and the result follows. 
\end{proof}

\begin{lemma}
Suppose $\pi_{/\mathcal{B}}$ is a covering and $\alpha\neq\beta$ are
in the same block $B$ of $\mathcal{B}$. Then $d(\alpha ,\beta)\geq 3$.
\end{lemma}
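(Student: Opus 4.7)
The plan is to rule out distance $1$ and distance $2$ separately. Since $\mathcal{B}$ is type-refining, every element of the block $B$ has the same type, so $t(\alpha)=t(\beta)$. The pregeometry axiom that $\alpha * \beta$ together with $t(\alpha)=t(\beta)$ forces $\alpha=\beta$ then shows that distinct elements of $B$ are non-incident, so $d(\alpha,\beta)\geq 2$ already.

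To exclude $d(\alpha,\beta)=2$, I would argue by contradiction: suppose there is a common neighbour $\gamma$ with $\alpha * \gamma * \beta$, and shift attention to the corank $1$ residue $\Gamma_\gamma$, which contains both $\alpha$ and $\beta$. Because $\pi_{/\mathcal{B}}$ is a covering, its restriction to $\Gamma_\gamma$ is an isomorphism, and in particular a bijection, onto $(\Gamma_{/\mathcal{B}})_{\pi_{/\mathcal{B}}(\gamma)}$. But $\pi_{/\mathcal{B}}(\alpha)=\pi_{/\mathcal{B}}(\beta)=B$, so injectivity of this restriction forces $\alpha=\beta$, contradicting the hypothesis $\alpha\neq\beta$. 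Hence no such $\gamma$ exists, and $d(\alpha,\beta)\geq 3$.

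There is no real obstacle; the only thing to notice is that the covering property should be invoked at the intermediate element $\gamma$ rather than at $\alpha$ or $\beta$. It is precisely the injectivity of $\pi_{/\mathcal{B}}$ on $\Gamma_\gamma$ that collapses the two candidate preimages of $B$ into a single element, yielding the contradiction.
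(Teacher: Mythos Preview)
Your proof is correct and follows essentially the same approach as the paper: rule out $d(\alpha,\beta)=1$ via the type-refining property and the pregeometry axiom, then rule out $d(\alpha,\beta)=2$ by noting that a common neighbour $\gamma$ would place both $\alpha$ and $\beta$ in $\Gamma_\gamma$, where the covering restricts to an injection, forcing $\alpha=\beta$.
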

\begin{proof}
Since $\mathcal{B}$ is type-refining, $\alpha$ and $\beta$ must have
the same type and by the definition of a pregeometry, $d(\alpha
,\beta)>1$ so that we only have to show that $d(\alpha ,\beta)\neq
2$. If some $\gamma$ has the property $\alpha\ast\gamma\ast\beta$, we
get that both $\alpha$ and $\beta$ belong to the residue
$\Gamma_{\gamma}$, on which $\pi_{/\mathcal{B}}$ is supposed to be
injective. This is a contradiction since both $\alpha$ and $\beta$ map
to the block $B$.
\end{proof}

We note that $d(\alpha,\beta)\geq 3$ does not imply a cover. For example, let $\Gamma$ be the pregeometry whose elements are the vertices of a hexagon, incidence is adjacency in the cycle and two elements are of the same type if they are antipodal. If we let $\mathcal{B}$ be the partition of the elements into those of the same type then $d(\alpha,\beta)=3$ for all $\alpha,\beta$ in the same block. However, $\Gamma_{/\mathcal{B}}$ has incidence graph a cycle of length three and so $\Gamma$ is not a cover of $\Gamma_{/\mathcal{B}}$.

\begin{lemma}\label{lem:3inj}
If for each $B\in\mathcal{B}$ and distinct $\alpha,\beta\in B$ we have
$d(\alpha,\beta)\geq 3$ in the incidence graph, then the restriction
of the projection $\pi_{/\mathcal{B}}$ to any corank~$1$ residue is
injective.
\end{lemma}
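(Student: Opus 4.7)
The plan is to argue by contradiction: assume distinct elements $\alpha,\beta$ in some corank~$1$ residue $\Gamma_\gamma$ are identified by $\pi_{/\mathcal{B}}$, so that $\alpha$ and $\beta$ lie in a common block $B\in\mathcal{B}$, and derive a short path between them that violates the distance hypothesis.

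The key observations I would assemble are straightforward. First, since $\alpha,\beta\in\Gamma_\gamma$, both are incident with $\gamma$; moreover, by the definition of residue, $t(\alpha),t(\beta)\neq t(\gamma)$, so in particular $\alpha\neq\gamma\neq\beta$. Hence the path $\alpha\ast\gamma\ast\beta$ in the incidence graph has length~$2$, showing $d(\alpha,\beta)\leq 2$. Second, because $\mathcal{B}$ is type-refining, $\alpha$ and $\beta$ have the same type; together with $\alpha\neq\beta$ and the pregeometry axiom (incident elements of the same type coincide), this forces $\alpha\not\ast\beta$, so $d(\alpha,\beta)\geq 2$. Thus $d(\alpha,\beta)=2$, directly contradicting the hypothesis $d(\alpha,\beta)\geq 3$.

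This argument is essentially the same one given in the preceding lemma (where a covering was the assumption), but with the injectivity of $\pi_{/\mathcal{B}}$ on $\Gamma_\gamma$ replaced by the distance condition $d(\alpha,\beta)\geq 3$; the direction is reversed because here we are using the distance hypothesis to deduce the injectivity rather than the other way around. There is no real obstacle: the only small point worth being explicit about is that a corank~$1$ residue is, by the convention fixed in Subsection~\ref{sec:prelim}, the residue of a rank~$1$ flag $\{\gamma\}$, so the two candidates $\alpha,\beta$ are automatically both incident to a common element $\gamma$ which moreover differs from them in type.
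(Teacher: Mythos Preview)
Your proof is correct and follows essentially the same approach as the paper: both argue that two distinct elements in a common block of $\mathcal{B}$ lying in the same corank~$1$ residue $\Gamma_\gamma$ would be joined by the length~$2$ path through $\gamma$, contradicting $d(\alpha,\beta)\geq 3$. Your additional observation that $d(\alpha,\beta)=2$ exactly (rather than merely $\leq 2$) is correct but not needed for the contradiction.
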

\begin{proof}
Let $\alpha$ be an element of $\Gamma$ and $B$ be the unique element
of $\mathcal{B}$ containing $\alpha$. Since all elements of
$\Gamma_\alpha$ are incident with $\alpha$, for each
$\beta\in\Gamma_\alpha$, the unique element $D\in \mathcal{B}$
containing $\beta$ meets $\Gamma_{\alpha}$ only in $\beta$. Hence the
restriction of $\pi_{/\mathcal{B}}$ to $\Gamma_\alpha$ is injective. 
\end{proof}
 
When $\pi_{/\mathcal{B}}$ is surjective on corank~$1$ residues, for example, when $\mathcal{B}$ is the set of orbits of some subgroup (see Lemma \ref{lem:orbitressurj}), we have the following lemma.

\begin{lemma}\label{lem:iscovering}
Assume that $\pi_{/\mathcal{B}}$ is surjective on corank~$1$ residues.
If the distance between any two distinct elements of the same
block of $\mathcal{B}$ is at least $4$, then $\pi_{/\mathcal{B}}$ is a
covering. 
\end{lemma}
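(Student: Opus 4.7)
The goal is to show that for every element $\alpha$ of $\Gamma$ with $B=\pi_{/\mathcal{B}}(\alpha)$, the restriction $\pi_{/\mathcal{B}}\colon \Gamma_\alpha\to (\Gamma_{/\mathcal{B}})_B$ is an isomorphism of pregeometries. Three ingredients must be established: surjectivity (which is given by hypothesis), injectivity, and the preservation of both incidence and non-incidence (types are automatically preserved since $\pi_{/\mathcal{B}}$ is a pregeometry morphism).

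Injectivity is immediate from Lemma \ref{lem:3inj}, since the hypothesis $d(\alpha',\beta')\geq 4$ for distinct elements of the same block trivially implies $d(\alpha',\beta')\geq 3$. Incidence is preserved by $\pi_{/\mathcal{B}}$ itself, so the only real content is to show that incidence in the image forces incidence in $\Gamma_\alpha$: if $\beta,\gamma\in\Gamma_\alpha$ and $\pi_{/\mathcal{B}}(\beta)*_{/\mathcal{B}}\pi_{/\mathcal{B}}(\gamma)$, then $\beta*\gamma$.

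To handle this last step, I would apply surjectivity on corank~$1$ residues to $\beta$ instead of $\alpha$: set $C=\pi_{/\mathcal{B}}(\beta)$ and $D=\pi_{/\mathcal{B}}(\gamma)$, and use that $D\in(\Gamma_{/\mathcal{B}})_C$ to obtain some $\delta\in\Gamma_\beta$ with $\pi_{/\mathcal{B}}(\delta)=D=\pi_{/\mathcal{B}}(\gamma)$. Then $\delta$ and $\gamma$ lie in the same block, so either $\delta=\gamma$ or $d(\delta,\gamma)\geq 4$. But the path $\delta*\beta*\alpha*\gamma$ witnesses $d(\delta,\gamma)\leq 3$, forcing $\delta=\gamma$, and hence $\gamma*\beta$ as required.

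The main obstacle, and the point where the hypothesis $d\geq 4$ (rather than $\geq 3$) is genuinely used, is the distance estimate in this last step: three edges are needed to connect $\delta$ to $\gamma$ via $\beta$ and $\alpha$, so a weaker separation bound would not yield the contradiction. Everything else is a direct book-keeping exercise combining the injectivity lemma with the surjectivity hypothesis.
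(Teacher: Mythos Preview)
Your proof is correct and follows essentially the same approach as the paper: both use Lemma~\ref{lem:3inj} for injectivity, the hypothesis for surjectivity, and then establish preservation of non-incidence by passing to the corank~$1$ residue at $\beta$, producing an element in the same block as $\gamma$, and ruling out the case where it differs from $\gamma$ via the length-$3$ path $\delta*\beta*\alpha*\gamma$ and the distance~$\geq 4$ hypothesis. The only cosmetic difference is that the paper phrases the last step as a proof by contradiction from non-incidence, whereas you phrase it contrapositively.
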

\begin{proof}
By Lemma~\ref{lem:3inj} we have that the restriction of
$\pi_{/\mathcal{B}}$ to a corank~1 residue is a bijection. We now show
that this bijection also preserves nonincidence. Suppose $\beta$ and $\gamma$
are nonincident elements in some residue $\Gamma_\alpha$, with $\beta$, $\gamma$ 
contained in the blocks $B$ and $C$ respectively. If
$B\ast_{/\mathcal{B}} C$, the residue $(\Gamma_{/\mathcal{B}})_B$ must
contain the block $C$. Since the restriction of $\pi_{/\mathcal{B}}$
to $\Gamma_\beta$ is a bijection onto $(\Gamma_{/\mathcal{B}})_B$, there
must be a unique element $\gamma'\in C$ that belongs to $\Gamma_\beta$. There
are two possibilities: either $\gamma'=\gamma$ or $\gamma'\neq \gamma$. In the former case
we get $\gamma\ast \beta$, a contradiction. The latter case yields a path
$\gamma'\ast \beta\ast \alpha\ast \gamma$ of length~3 joining $\gamma'$ to $\gamma$ which both lie
in $C$, another contradiction.
\end{proof}

This distance condition was considered by Tits (see the discussion in Section \ref{sec:survey}). 
The following example is a good illustration of coverings and quotients.

\begin{example}
Let $\Gamma=(X,*,t)$ be the rank 3 geometry formed from a
triangulation of the Euclidean plane with triangles coloured black or
white such that no two triangles sharing a side have the same
colour. Let $X_1$ be the set of vertices of the triangles, $X_2$ the
set of white triangles and $X_3$ the set of black triangles. Elements
of $X_1$ are incident with the triangles they are contained in, while
a black triangle is incident with a white triangle  if and only if they
share an edge. Each flag is contained in a chamber so $\Gamma$ is a
geometry. Let $G=L\rtimes D_6$ where $L$ is the group of all
translations by elements of the lattice of vertices. Then $L$ acts
regularly on $X_1, X_2$ and $X_3$. Let $n\geq 2$ be an integer and let
$N=\{n\ell\mid \ell\in L\}\norml L$. Then $N$ is intransitive on $X_i$
for $i=1,2,3$ with $n^2$ orbits on each. Moreover, for each $\alpha\in
X$ and $n\ell\in N\setminus\{1\}$, the distance in the incidence graph of $\Gamma$ between $\alpha$ and
$\alpha^{n\ell}$ is at least $2n\geq 4$. Hence by Lemma \ref{lem:3inj} and Lemma
\ref{lem:iscovering}, the projection
$\pi_N:\Gamma\rightarrow\Gamma_{/N}$ is a covering and so by Lemma
\ref{lem:covgivesgeom},  $\Gamma_{/N}$ is a geometry. 
\end{example}

Let $\Sigma$ be a graph and let $\mathcal{B}$ be a partition of the
vertex set of $\Sigma$. The quotient graph $\Sigma_{\mathcal{B}}$ of
$\Sigma$ with respect to $\mathcal{B}$ is the graph with vertices the
blocks of $\mathcal{B}$ and two blocks $B_1,B_2\in\mathcal{B}$ are
adjacent if there exists $\alpha\in B_1$, $\beta\in B_2$ such that
$\alpha$ and $\beta$ are adjacent in $\Sigma$. If $\alpha$ is adjacent to $\beta$ we say that $\alpha$ is a \emph{neighbour} of $\beta$. We say that $\Sigma$ is
a \emph{cover} of $\Sigma_{\mathcal{B}}$ if for all vertices $\sigma$ of
$\Sigma$ with unique block $B\in\mathcal{B}$ containing $\sigma$, the
projection map $\pi:\Sigma\rightarrow\Sigma_{\mathcal{B}}$ induces a
bijection between the set of neighbours of
$\sigma$ in $\Sigma$ and the set of neighbours of
$B$ in $\Sigma_{\mathcal{B}}$. This is equivalent to requiring that
the induced subgraph between any two adjacent blocks of $\mathcal{B}$
is a complete matching. Much attention has been paid to constructing covers of graphs in the
literature, see for example \cite{biggs,djok,grosstucker,MNS}. 

The definition of a covering of a graph is weaker than that of a geometry: a covering of a geometry requires an \emph{isomorphism} between the graph induced on the set of neighbours of
$\sigma$ in the incidence graph of a geometry $\Gamma$ and the graph induced on the set of neighbours of
$B$ in the incidence graph of $\Gamma_{/\mathcal{B}}$, instead of merely a bijection. However, in the orbit-quotient case, the ability to lift rank~3 flags of $\Gamma_{/\mathcal{B}}$ to rank~3 flags of $\Gamma$ uniquely defines the difference between the two types of covers.

\begin{lemma}
Let $\Gamma$ be a geometry with orbit-quotient $\Gamma_{/A}$.   Then $\Gamma$ is a cover of $\Gamma_{/A}$ if and only if the following hold:
\begin{enumerate}
 \item the incidence graph of $\Gamma$ is a graph cover of the incidence graph of $\Gamma_{/A}$, and
 \item for each rank~3 flag $\{B,C,D\}$ of $\Gamma_{/A}$, there exists $\beta\in B,\gamma\in C$ and $\delta\in D$ such that $\{\beta,\gamma,\delta\}$ is a flag in $\Gamma$.
\end{enumerate}
\end{lemma}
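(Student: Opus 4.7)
The plan is to prove both directions. For the forward direction, suppose $\Gamma$ is a cover of $\Gamma_{/A}$, so for every $\alpha\in X$ the restriction $\pi_{/A}|_{\Gamma_\alpha}$ is a pregeometry isomorphism onto $(\Gamma_{/A})_{\alpha^A}$. In particular this restriction is a bijection between underlying sets; since the neighbours of $\alpha$ in the incidence graph of $\Gamma$ are precisely the elements of $\Gamma_\alpha$ (and analogously for $\alpha^A$ in $\Gamma_{/A}$), (1) follows immediately. For (2), given a rank~3 flag $\{B,C,D\}$ of $\Gamma_{/A}$, pick any $\beta\in B$. Then $\{C,D\}$ is a rank~2 flag of $(\Gamma_{/A})_B$, so applying the inverse of the isomorphism $\pi_{/A}|_{\Gamma_\beta}\colon\Gamma_\beta\to(\Gamma_{/A})_B$ produces a rank~2 flag $\{\gamma,\delta\}$ in $\Gamma_\beta$ with $\pi_{/A}(\gamma)=C$ and $\pi_{/A}(\delta)=D$, whence $\{\beta,\gamma,\delta\}$ is the required flag of $\Gamma$.

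For the reverse direction, assume (1) and (2) hold. Fix $\alpha\in X$ and set $B=\alpha^A$; the task is to show that $\pi_{/A}|_{\Gamma_\alpha}\colon\Gamma_\alpha\to(\Gamma_{/A})_B$ is a pregeometry isomorphism. Condition (1), together with the identification of $\Gamma_\alpha$ with the neighbourhood of $\alpha$ in the incidence graph, supplies a bijection of underlying sets; since $\pi_{/A}$ is always a pregeometry morphism, this restriction automatically preserves type and incidence. So the whole reverse direction reduces to showing that $\pi_{/A}|_{\Gamma_\alpha}$ also \emph{reflects} incidence.

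To establish this, suppose $\beta,\gamma\in\Gamma_\alpha$ are distinct, with $\pi_{/A}(\beta)=C$ and $\pi_{/A}(\gamma)=D$, and assume $C*_{/A}D$. Since $\alpha*\beta$ and $\alpha*\gamma$ we have $B*_{/A}C$ and $B*_{/A}D$, and since $\mathcal{B}$ is type-refining the three blocks $B,C,D$ carry distinct types, so $\{B,C,D\}$ is a rank~3 flag of $\Gamma_{/A}$. By (2) there exist $\beta'\in B$, $\gamma'\in C$, $\delta'\in D$ with $\{\beta',\gamma',\delta'\}$ a flag of $\Gamma$. Because $B$ is an $A$-orbit there is $a\in A$ with $(\beta')^a=\alpha$, and because $A$ preserves $\mathcal{B}$ we still have $(\gamma')^a\in C$ and $(\delta')^a\in D$. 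Thus $\{\alpha,(\gamma')^a,(\delta')^a\}$ is a flag of $\Gamma$, so both $(\gamma')^a$ and $\beta$ lie in $\Gamma_\alpha$ and project to $C$; by the injectivity coming from (1), $(\gamma')^a=\beta$, and similarly $(\delta')^a=\gamma$. Since $(\gamma')^a*(\delta')^a$ we deduce $\beta*\gamma$, as required.

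The main technical obstacle is the reverse direction, specifically the step that turns the set-level bijection provided by (1) into a pregeometry isomorphism: a graph cover need not reflect nonedges between distinct blocks (the rank~3 counterexample discussed just before the lemma shows this for a bare graph cover), and it is exactly hypothesis (2), combined with the transitivity of $A$ on each block, that supplies the missing information to pull pairs of incident blocks back to incident pairs inside a single residue.
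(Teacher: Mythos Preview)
Your proof is correct and follows essentially the same approach as the paper's own proof: both directions are argued identically, using the isomorphism on residues to derive (1) and (2) in the forward direction, and in the reverse direction lifting a rank~3 flag via (2), translating it by an element of $A$ to hit the chosen base element, and then invoking the injectivity from (1) to identify the translated elements with the given ones. Your write-up is in fact slightly more explicit than the paper's at the final identification step.
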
 
\begin{proof}
Suppose first that $\Gamma$ is a cover of $\Gamma_{/A}$ and let $\{B,C,D\}$ be a flag of $\Gamma_{/A}$. Let $\beta\in B$. Then $\pi_{/A}$ induces an isomorphism from $\Gamma_{\beta}$ to $(\Gamma_{/A})_B$ and so the incidence graph of $\Gamma$ is a cover of the incidence graph of $\Gamma_{/A}$. Moreover, there exist $\gamma\in C\cap\Gamma_{\beta}$ and $\delta\in D\cap\Gamma_{\delta}$ such that $\gamma*\delta$. Hence $\{\beta,\gamma\,\delta\}$ is a flag in $\Gamma$.

Conversely, suppose that the incidence graph of $\Gamma$ is a cover of the incidence graph of $\Gamma_{/A}$ and we can lift rank 3 flags of $\Gamma_{/A}$ to rank 3 flags of $\Gamma$. Let $\beta$ be an element of $\Gamma$ and $B =\beta^A$. Since the incidence graph of $\Gamma$ is a cover of the incidence graph of $\Gamma_{/A}$, the projection map $\pi_{/A}$ induces a bijection from $\Gamma_{\beta}$ to $(\Gamma_{/A})_B$. It remains to show that $\pi_{/A}$ induces an isomorphism.

Now, if $\gamma,\delta\in \Gamma_{\beta}$ such that $\gamma*\delta$ we have  $\pi_{/A}(\gamma)*_{/A}\pi_{/A}(\delta)$. Conversely, suppose that $C*_{/A} D$ in $(\Gamma_{/A})_B$. Then $\{B,C,D\}$ is a flag in $\Gamma_{/A}$ and so by the assumption there exist $\beta'\in B,\gamma'\in C$ and $\delta'\in D$ such that $\{\beta',\gamma',\delta'\}$ is a flag in $\Gamma$. Since $B=\beta^A$, there exists $a\in A$ such that $(\beta')^a=\beta$. Hence $\{\beta,(\gamma')^a, (\delta')^a\}$ is a flag of $\Gamma$ which projects onto $\{B,C,D\}$ and  since $\pi_{/A}$ induces a bijection from $\Gamma_\beta$ to $(\Gamma_{/A})_B$ it follows that $\pi_{/A}$ induces an isomorphism from $\Gamma_{\beta}$ to $(\Gamma_{/A})_B$.
\end{proof}

\subsection{Affine examples and generalisations}

Coverings are a very specific case where Problem \ref{prob:1} has
a positive solution. We have already seen that the 
quotient of any rank 3 geometry is a geometry. The following
example is another instance where the original geometry is not a cover
of the quotient but the quotient is a geometry. 

\begin{example}
\label{eg:affine}
Let $\Gamma$ be the affine space $\AG(d,q)$ for some prime power $q$ and
$d\geq 3$. Let $G=\AGL(d,q)$ and $N$ be the normal subgroup of $G$
consisting of all translations. Then $\Gamma=(X,*,t)$ with
$|I|=\{0,\ldots,d-1\}$ such that $X_i$ consists of all affine
$i$-spaces and $*$ is the natural incidence. Now $N$ is transitive on
$X_0$, while it has orbits of length
$\frac{q^{d-i}(q^d-1)\cdots(q^{d-i+1}-1)}{(q^i-1)\cdots(q-1)}$ on
$X_i$ for $1\leq i\leq d-1$. Moreover, if we exclude $X_0$ from
$\Gamma_{/N}$ then $\Gamma_{/N}$ is the projective space
$\PG(d-1,q)$. The singleton $\{X_0\}$ of $\Gamma_{/N}$ is incident with all elements of $\Gamma_{/N}$ and so $\Gamma_{/N}$ is a geometry.

Each orbit of $N$ on $X_i$ for $i\geq 1$ contains precisely one
$i$-dimensional subspace of $\GF(q)^d$, with the remaining elements of
the orbit being its translates. If $W+x\in X_i$ for $i\leq d-2$, then
the distance between distinct $W+x$ and $(W+x)^n=W+x+n$ in the
incidence graph of $\Gamma$ is two for all $n\in N\setminus W$ as both are
incident with the translate $\la W,n\ra+x$ of the $(i+1)$-space $\la
W,n\ra$. 

Let $\alpha\in X$ and suppose that $t(\alpha)=i$. Since the images of
an element of $\Gamma$ under nontrivial elements of $N$ are pairwise
disjoint, no two elements of $\Gamma_\alpha$ of type $j>i$  lie in the
same $N$-orbit. However, if $i\geq 2$ then for $\beta\in X_1\cap
\Gamma_\alpha$ and $\gamma\in X_0\cap \Gamma_\alpha$ such that
$\gamma$ and $\beta$ are not incident, the translate of $\beta$ by
$\gamma$ lies both in $X_1\cap \Gamma_\alpha$ and the $N$-orbit of $\beta$. So the projection
$\pi_{/N}$ is not a covering. 
\end{example}

In Example \ref{eg:affine}, for each $i<d-1$ and $\alpha\in X$ of type
$i$, $\pi_{/N}$ induces an isomorphism from those elements of
$\Gamma_{\alpha}$ of type greater than $i$ to those elements of
$(\Gamma_{/N})_{\alpha^N}$ of type greater than $i$. Thus, given a flag $F_N$ of
$\Gamma_{/N}$ such that the least integer $i$ for which $F_N$ contains
an element of type $i$ is less than $d-1$, we deduce that there exists
a flag $F$ of $\Gamma$ such that $\pi_{/N}(F)=F_N$, and this also holds
trivially for flags $F_N$ of type $\{d-1\}$. This suggests the following result.
\begin{theorem}
Let $\Gamma=(X,*,t)$ be a geometry and let $\mathcal{B}$ be a
type-refining partition. Suppose that there is a total ordering $\leq$
on $I$ such that for each element $\alpha$ of $X$,
$\pi_{/\mathcal{B}}$ induces an isomorphism from $\{\beta\in
\Gamma_\alpha\mid t(\beta)\geq t(\alpha)\}$ onto
$\{B\in(\Gamma_{/\mathcal{B}})_{A}\mid t_{/\mathcal{B}}(B)\geq
t(\alpha)\}$, where $A$ is the unique element of $\mathcal{B}$
containing $\alpha$. Then \textsc{(FlagsLift)} holds. In particular,
$\Gamma_{/\mathcal{B}}$ is a geometry. 
\end{theorem}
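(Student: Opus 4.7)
The plan is to establish the condition \textsc{(FlagsLift)} directly from the hypothesis; the final sentence of the theorem then follows immediately from Lemma \ref{lem:flagliftgeom}. The key observation is that the ordering $\leq$ on $I$ lets us choose a ``smallest'' element of any flag in $\Gamma_{/\mathcal{B}}$, and then the hypothesised isomorphism (applied at any preimage of that element) lifts the remainder of the flag in one stroke.

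Concretely, I would proceed as follows. Let $F_{\mathcal{B}}$ be an arbitrary flag of $\Gamma_{/\mathcal{B}}$, and relabel its elements as $B_1,\ldots,B_k$ so that
\[
t_{/\mathcal{B}}(B_1) < t_{/\mathcal{B}}(B_2) < \cdots < t_{/\mathcal{B}}(B_k)
\]
with respect to the given total order $\leq$. Pick any $\alpha_1 \in B_1$, so $t(\alpha_1)=t_{/\mathcal{B}}(B_1)$ is the minimum type occurring in $F_{\mathcal{B}}$. By the hypothesis applied to $\alpha = \alpha_1$, the projection restricts to an isomorphism
\[
\varphi \colon \{\beta\in\Gamma_{\alpha_1}\mid t(\beta)\geq t(\alpha_1)\} \;\longrightarrow\; \{B\in(\Gamma_{/\mathcal{B}})_{B_1}\mid t_{/\mathcal{B}}(B)\geq t(\alpha_1)\}.
\]
Each $B_i$ with $i\geq 2$ is incident with $B_1$ and satisfies $t_{/\mathcal{B}}(B_i) > t(\alpha_1)$, so $F_{\mathcal{B}}\setminus\{B_1\}$ is a flag in the codomain of $\varphi$. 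Pulling this flag back through $\varphi$ yields elements $\alpha_i := \varphi^{-1}(B_i) \in \Gamma_{\alpha_1}$ for $i=2,\ldots,k$; because $\varphi$ is an isomorphism of pregeometries, $\{\alpha_2,\ldots,\alpha_k\}$ is a flag of $\Gamma_{\alpha_1}$, and since every $\alpha_i$ is by construction incident with $\alpha_1$, the set $F := \{\alpha_1,\alpha_2,\ldots,\alpha_k\}$ is a flag of $\Gamma$. Finally $\pi_{/\mathcal{B}}(\alpha_i) = \varphi(\alpha_i) = B_i$ for $i\geq 2$, and $\pi_{/\mathcal{B}}(\alpha_1)=B_1$ by choice, so $\pi_{/\mathcal{B}}(F) = F_{\mathcal{B}}$, establishing \textsc{(FlagsLift)}.

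There is no serious obstacle here: the construction is a one-step lifting rather than an induction, because the hypothesis already bundles all the relevant higher-type incidences into a single isomorphism at $\alpha_1$. The two checks worth emphasising explicitly when writing the proof are (i) that $\varphi$ is type-preserving, so that $\varphi^{-1}(B_i)$ really has type $t_{/\mathcal{B}}(B_i)$, and (ii) that $\varphi$ transports flags to flags in both directions, which is where pairwise incidence of the $\alpha_i$ for $i\geq 2$ comes from. The edge case of a rank-one flag $F_{\mathcal{B}}=\{B_1\}$ is trivial, as any $\alpha_1 \in B_1$ works. Applying Lemma \ref{lem:flagliftgeom} then gives the ``in particular'' conclusion that $\Gamma_{/\mathcal{B}}$ is a geometry.
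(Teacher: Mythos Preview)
Your proposal is correct and follows essentially the same approach as the paper: pick a preimage $\alpha$ of the element of least type in $F_{\mathcal{B}}$, use the hypothesised isomorphism at $\alpha$ to pull back $F_{\mathcal{B}}\setminus\{A\}$ to a flag $F'$ in $\Gamma_\alpha$, and conclude that $F'\cup\{\alpha\}$ is the required lift, then invoke Lemma~\ref{lem:flagliftgeom}. Your write-up is slightly more explicit about why $\varphi^{-1}$ preserves flags and types, but the argument is the same.
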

\begin{proof}
Let $F_{\mathcal{B}}$ be a flag of $\Gamma_{/\mathcal{B}}$ and let $i$
be the least element of $t_{/\mathcal{B}}(F_{\mathcal{B}})$ according
to $\leq$. Let $A\in F_{\mathcal{B}}$ of type $i$ and $\alpha\in
A$. Then $\pi_{/\mathcal{B}}$ induces an isomorphism from
$Y=\{\beta\in \Gamma_\alpha\mid t(\beta)\geq t(\alpha)\}$ onto
$\{B\in(\Gamma_{/\mathcal{B}})_A\mid t_{/\mathcal{B}}(B)\geq
t(\alpha)\}=\pi_{/\mathcal{B}}(Y)$. Since $(F_{\mathcal{B}}\setminus
\{A\})\subseteq \pi_{/\mathcal{B}}(Y)$, it follows that there exists a
flag $F'\subseteq Y$ such that
$\pi_{/\mathcal{B}}(F')=F_{\mathcal{B}}\setminus \{A\}$. Thus
$F=F'\cup\{\alpha\}$ is a flag of $\Gamma$ which projects onto
$F_{\mathcal{B}}$ and so \textsc{(FlagsLift)} holds.  Hence by Lemma
\ref{lem:flagliftgeom}, $\Gamma_{/\mathcal{B}}$ is a geometry. 
\end{proof}

We also have the following general construction which lifts any geometry to a larger geometry having the original geometry as a quotient.

\begin{construction}
\label{con:blowup}
Let $\Gamma=(X,*,t)$ be a geometry of rank $n$ with
$G\leqslant\Aut(\Gamma)$, and let $\Delta$ be a graph with
$H\leqslant\Aut(\Delta)$ such that $H$ is vertex-transitive on $\Delta$. We form a new rank $n$ pregeometry
$\Gamma\times \Delta$ with element set $\{(\alpha,\delta)\mid
\alpha\in X,\delta\in V\Delta\}$ such that
$(\alpha,\delta)*(\beta,\delta')$ if and only if $\alpha*\beta$ in
$\Gamma$ and either $(\alpha,\delta)=(\beta,\delta')$, or $\alpha\neq \beta$ and $\delta\sim\delta'$ in $\Delta$. The type of
$(\alpha,\delta)$ is defined to be equal to the type of $\alpha$ in
the original geometry. The incidence graph of
$\Gamma\times \Delta$ is the incidence graph of $\Gamma$ with each
edge replaced by a copy of the standard double cover of $\Delta$. 
Note that $G\times H\leqslant\Aut(\Gamma\times \Delta)$ and contains
$1\times H$ as a normal subgroup. Moreover, $\Gamma$ is the quotient
of $\Gamma\times \Delta$ with respect to the orbits of $1\times H$.  
\end{construction}

\begin{theorem}
\label{thm:blowup}
Let $\Gamma \times \Delta$, $n$, $G$ and $H$ be as in Construction \ref{con:blowup}.
\begin{enumerate}
\item If $\Gamma$ and $\Delta$ are connected and $\Delta$ is not
  bipartite then $\Gamma\times \Delta$ is connected. 
\item $\Gamma\times \Delta$ is a cover of $\Gamma$ if and only if
  $\Delta$ is a matching. 
\item $\Gamma\times \Delta$ is a geometry if and only if each clique
  of $\Delta$ of size at most $n$ is contained in a clique of size
  $n$. 
\item $\Gamma\times \Delta$ is firm if and only if either
   \begin{enumerate}
      \item $\Gamma$ is firm and $\Delta$ contains a clique of size $n$, or
       \item every clique of size $r<n$ in $\Delta$ is contained in at
         least two cliques of size $n$. 
   \end{enumerate}
\item $G\times H$ acts flag-transitively on $\Gamma\times \Delta$ if
  and only if $G$ is flag-transitive on $\Gamma$ and for each $i\leq
  n$, $H$ acts transitively on the set of ordered cliques of $\Delta$ of size
  $i$. 
\end{enumerate}
\end{theorem}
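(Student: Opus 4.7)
The plan is to exploit one structural observation and then dispatch the five parts in turn. The key observation is that a flag $F$ of $\Gamma\times\Delta$ of type $J\subseteq I$ is exactly a set $\{(\alpha_j,\delta_j):j\in J\}$ such that $\{\alpha_j:j\in J\}$ is a flag of $\Gamma$ of type $J$ and $\{\delta_j:j\in J\}$ is a clique of size $|J|$ in $\Delta$; indeed distinct elements of a flag carry distinct types, hence distinct first coordinates $\alpha_j$, which by the definition of incidence in Construction~\ref{con:blowup} forces the second coordinates $\delta_j$ to be pairwise adjacent (and so pairwise distinct) in $\Delta$. All five parts follow from analysing this decomposition.

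For part (1), I would use the description of the incidence graph given just before the theorem: between $\{\alpha\}\times V\Delta$ and $\{\beta\}\times V\Delta$ for an edge $\alpha*\beta$ of $\Gamma$ sits a copy of the bipartite double cover of $\Delta$, and this double cover is connected whenever $\Delta$ is connected and non-bipartite. I would first join any two vertices $(\alpha,\delta)$ and $(\alpha,\delta')$ sharing a first coordinate via a neighbour of $\alpha$ through the connected double cover on that edge, then extend to arbitrary pairs by walking along a path in the incidence graph of $\Gamma$, which is connected by hypothesis. For part (2), I would unpack the cover condition on the residue of $(\alpha,\delta)$: its elements are the pairs $(\beta,\delta')$ with $\beta\in\Gamma_\alpha$ and $\delta'$ adjacent to $\delta$, and the projection sends each such pair to $\beta$. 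Bijection onto $\Gamma_\alpha$ for every $\delta$ is equivalent, by vertex-transitivity of $H$, to every vertex of $\Delta$ having a unique neighbour, i.e.\ to $\Delta$ being a matching; I would then check that the residue incidences match up in that case.

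Parts (3) and (4) rest on a common counting. Extending a flag $F$ to a chamber amounts to extending $\{\alpha_j\}$ to a chamber of $\Gamma$ and independently extending $\{\delta_j\}$ to an $n$-clique of $\Delta$. For (3), since $\Gamma$ is a geometry the first extension is always available, so $\Gamma\times\Delta$ is a geometry iff every clique of $\Delta$ of size at most $n$ lies in some $n$-clique. For (4), the number of chambers of $\Gamma\times\Delta$ containing a given corank 1 flag $F$ is the product $N_\Gamma(F_\Gamma)\cdot N_\Delta(F_\Delta)$ of the numbers of extensions of its two parts. Since firmness presupposes the geometry condition, both factors are at least $1$, and firmness becomes the requirement that this product be $\geq 2$ for every pair $(F_\Gamma,F_\Delta)$. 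The product fails to reach $2$ exactly when there simultaneously exist a corank 1 flag of $\Gamma$ with only one extension and an $(n-1)$-clique of $\Delta$ with only one $n$-clique extension, so firmness is equivalent to the disjunction ``$\Gamma$ is firm'' or ``every $(n-1)$-clique lies in at least two $n$-cliques''. Condition (a) then is the first disjunct together with the existence of an $n$-clique in $\Delta$, which by vertex-transitivity of $H$ and the geometry condition provides the required $N_\Delta\geq 1$; condition (b), quantified over all $r<n$, is equivalent under the geometry condition of (3) to its $r=n-1$ case by a short downward induction on $n-r$ using that every $r$-clique extends to some $(n-1)$-clique.

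For part (5), the bijection of the opening observation, read as a bijection between flags of $\Gamma\times\Delta$ of type $J$ and pairs consisting of a flag of $\Gamma$ of type $J$ together with an ordered $|J|$-clique of $\Delta$ indexed by $J$, is equivariant for the diagonal action of $G\times H$. Hence $G\times H$ acts transitively on flags of type $J$ in $\Gamma\times\Delta$ iff $G$ is transitive on flags of type $J$ in $\Gamma$ and $H$ is transitive on ordered $|J|$-cliques of $\Delta$, and ranging over all non-empty $J\subseteq I$ yields the two stated conditions. The main obstacle I expect is part (4): controlling the pair of extension numbers as $F_\Gamma$ and $F_\Delta$ vary independently, and reconciling the natural dichotomy this produces with the precise form of conditions (a) and (b), requires the most care.
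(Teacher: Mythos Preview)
Your approach is essentially the same as the paper's: both rest on the observation that a rank $r$ flag of $\Gamma\times\Delta$ decomposes as a rank $r$ flag of $\Gamma$ together with an $r$-clique of $\Delta$, and that extending to a chamber factors accordingly. The paper's own proof is extremely terse---after stating this observation it simply asserts that ``the final three parts of the theorem follow''---whereas you supply the detailed product-counting argument for part (4) and the equivariance argument for part (5) that the paper leaves implicit; your handling of the reconciliation between the natural disjunction and the stated conditions (a)/(b) in part (4), including the downward induction under the geometry hypothesis of (3), is more careful than anything the paper writes out.
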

\begin{proof}
The standard double cover of $\Delta$ is connected if and only if
$\Delta$ is connected and not bipartite (see for example \cite[p298]{GLP1}). Hence part (1) follows.
Given $(\alpha,\delta)\in \Gamma\times\Delta$, we have $(\Gamma\times\Delta)_{(\alpha,\delta)}=\{(\beta,\delta')\mid \alpha*\beta, \alpha\neq \beta,\delta\sim\delta'\}$ which has image in the quotient given by $\Gamma_{\alpha}=\{\beta\mid \beta*\alpha,\alpha\neq \beta\}$. This projection will be a bijection and also an isomorphism if and only if $\delta$ has a unique neighbour in $\Delta$, that is, if an only if $\Delta$ is a matching. Thus we have part (2).

Note that
$F=\{(\alpha_1,\delta_1),(\alpha_2,\delta_2),\ldots,(\alpha_r,\delta_r)\}$
is a flag of $\Gamma\times \Delta$ if and only if
$\{\alpha_1,\ldots,\alpha_r\}$ is a flag of $\Gamma$ of rank $r$ and
$\{\delta_1,\ldots,\delta_r\}$ is a clique $\Delta$ of size $r$.
Moreover, $F$ lifts to a chamber if and only if
$\{\delta_1,\ldots,\delta_r\}$ is contained in a clique of size $n$.
Thus the final three parts of the theorem follow. 
\end{proof}

\begin{remark}
Note that for $n>2$ in the above theorem, property~3 about the extension
of cliques in $\Delta$ excludes property~2. Hence the theorem does not
provide a way to generate proper covers of geometries of rank $>2$
which are geometries, which would imply that universal covers
(see~\cite{Pasi94} for example) do not exist. Even in rank~$2$, where
property~2 implies that $\Delta$ is bipartite, we get that the
resulting geometry is not connected. 
\end{remark}

\section{Quotients of Tits and Pasini}
\label{sec:survey}

There are at least four sources to consult when one wants to learn
about quotients of incidence geometries, namely
\cite{hbk,Gelbgras88,Pasi94,titslocal}. 

Pasini \cite[p244]{Pasi94} defines quotients of geometries according
to type-refining partitions. This is exactly the same as we do but one
has to keep in mind that Pasini's definition of a geometry is 
more restrictive than ours: he always assumes firmness and residual
connectivity. We say that the pregeometry $\Gamma=(X,*,t)$ is
\emph{residually connected}  if for each flag $F$ with
$|I\setminus t(F)|\geq 2$, the incidence graph $(X_F,*_F)$ of the
residue $\Gamma_F$ has nonempty vertex set and is connected.

The properties that Pasini studies are the following.
\begin{description}
\item[(PQ1)] Given a flag $F=\{\alpha_1,\alpha_2,\ldots ,\alpha_m\}$ in
$\Gamma$ and a class $B\in\mathcal{B}$ of type
$t_{/\mathcal{B}}(B)\not\in t(F)$, whenever there exist
$\beta_1,\beta_2,\ldots ,\beta_m$ in $B$ such that for each
$i\in\{1,2,\ldots ,m\}$ we have $\Gamma_{\beta_{i}}\cap
\pi_{/\mathcal{B}}(\alpha_i)\neq\varnothing$. Then there must exist an element $y$ in $B$ incident with
$F$. \item[(PQ2)] Every residue of rank $1$ meets at least two classes
of $\mathcal{B}$. \end{description}

The first property ensures that whenever the projection of a flag $F$
of $\Gamma$ can be extended by one element $B$ in the quotient
$\Gamma_{/\mathcal{B}}$, the flag $F$ is contained in a larger flag of
$\Gamma$  which projects onto $\pi_{/\mathcal{B}}(F)\cup\{B\}$.

\begin{lemma}\label{lem:PQ1impFL}
In a quotient of finite rank,
Pasini's axiom (PQ1) implies \textsc{(FlagsLift)}.
\end{lemma}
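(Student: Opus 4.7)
The plan is to prove \textsc{(FlagsLift)} by induction on the rank $r$ of the flag $F_{\mathcal{B}}$ to be lifted. The finite-rank hypothesis is used only to ensure that $r$ is bounded (by $|I|$), so that the induction terminates.

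For the base case $r=1$ the statement is trivial: if $F_{\mathcal{B}}=\{B\}$, then any $\alpha\in B$ gives a flag $\{\alpha\}$ of $\Gamma$ with $\pi_{/\mathcal{B}}(\{\alpha\})=F_{\mathcal{B}}$. For the inductive step, assume every flag of $\Gamma_{/\mathcal{B}}$ of rank $r$ lifts, and let $F_{\mathcal{B}}$ be a flag of rank $r+1$. Choose any $B\in F_{\mathcal{B}}$ and set $F_{\mathcal{B}}':=F_{\mathcal{B}}\setminus\{B\}$. By the induction hypothesis there exists a flag $F'=\{\alpha_1,\dots,\alpha_r\}$ of $\Gamma$ with $\pi_{/\mathcal{B}}(F')=F_{\mathcal{B}}'$. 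It remains to find an element $y\in B$ incident with every $\alpha_i$; then $F'\cup\{y\}$ is a flag of $\Gamma$ projecting onto $F_{\mathcal{B}}$.

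Here is where \textsc{(PQ1)} enters. For each $i$ the classes $B$ and $\pi_{/\mathcal{B}}(\alpha_i)$ are incident in $\Gamma_{/\mathcal{B}}$ (both lie in $F_{\mathcal{B}}$), so by the very definition of incidence in the quotient there exist $\beta_i\in B$ and $\alpha_i'\in \pi_{/\mathcal{B}}(\alpha_i)$ with $\beta_i*\alpha_i'$; equivalently, $\Gamma_{\beta_i}\cap \pi_{/\mathcal{B}}(\alpha_i)\neq\varnothing$. Since $t_{/\mathcal{B}}(B)\notin t(F')$, the hypothesis of \textsc{(PQ1)} is satisfied (with these $\beta_i$, which are allowed to differ for different $i$), so \textsc{(PQ1)} yields an element $y\in B$ incident with every $\alpha_i$. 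This completes the inductive step and hence the proof of \textsc{(FlagsLift)}.

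The argument is essentially routine once one notices that the hypothesis of \textsc{(PQ1)} says exactly ``$B$ extends $\pi_{/\mathcal{B}}(F')$ to a flag of $\Gamma_{/\mathcal{B}}$''. The only genuine subtlety is to confirm that the $\beta_i$ appearing in \textsc{(PQ1)} need not be a single common element, but may vary with $i$, so that the incidence of $B$ with $\pi_{/\mathcal{B}}(\alpha_i)$ in the quotient (for each $i$ separately) is all that is required to invoke the axiom. The finiteness of the rank is what makes the induction go through; if $|I|$ were infinite one would need some form of transfinite or Zorn-style argument instead.
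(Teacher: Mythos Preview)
Your proof is correct and follows essentially the same approach as the paper's: induction on the rank of the flag, removing one block $B$, lifting the remainder by induction, and then applying \textsc{(PQ1)} to find an element of $B$ incident with the lifted flag. You are in fact slightly more explicit than the paper in verifying the hypothesis of \textsc{(PQ1)} (the existence of the witnesses $\beta_i$), which the paper leaves implicit.
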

\begin{proof}
We know that flags of rank $<2$ lift. We now induct on the rank of the flag. Suppose $F_{\mathcal{B}}$ is a
flag of rank $k\geq 2$ in $\Gamma_{/\mathcal{B}}$. Choose a type in
$t(F_{\mathcal{B}})$ and an element $\alpha$ of this type such that
$B=\pi_{/\mathcal{B}}(\alpha)\in F_{\mathcal{B}}$. By the inductive
hypothesis, the flag
$F_{\mathcal{B}}'=F_{\mathcal{B}}\setminus\{B\}$ can be lifted
to a flag $F'$ which, by (PQ1), must be incident to some element
$\beta$ of $B$. The flag $F'\cup\{\beta\}$ projects to
$F_{\mathcal{B}}$.
\end{proof}

The converse of Lemma \ref{lem:PQ1impFL} is not
true. Figure~\ref{fig:flnotpq1} gives a small counterexample. The flag
$\{\alpha\}$ yields an incidence with the class $B$ in the quotient
given by the rectangles. There is no element of $B$ incident with
$\alpha$. Since the example has rank~2 we have \textsc{(Flagslift)}.

\begin{figure}
\caption{Counterexample to the converse of Lemma~\ref{lem:PQ1impFL}.}\label{fig:flnotpq1}
\begin{center}
\includegraphics[width=.3\textwidth]{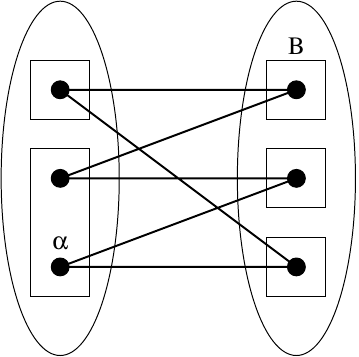}
\end{center}
\end{figure}

The stronger property of (PQ1) guarantees that $\pi_{/\mathcal{B}}$ is what Pasini calls
\emph{residually surjective}, that is to say, for each flag $F$ in
$\Gamma$,
$\pi_{/\mathcal{B}}(\Gamma_F)=(\Gamma_{/\mathcal{B}})_{\pi_{/\mathcal{B}}(F)}$.
The property (PQ1), together with (PQ2), which provides firmness of $\Gamma_{/\mathcal{B}}$, is
sufficient to show that the quotient $\Gamma_{/\mathcal{B}}$ is again a
firm and residually connected geometry.

It is also easy to notice that a residually surjective morphism of
geometries $\phi\colon\Gamma\rightarrow\Gamma'$, where both $\Gamma$
and $\Gamma'$ are firm and residually connected,  satisfies both
properties. In this case the second geometry is in fact (isomorphic to)
the quotient geometry of $\Gamma$ with respect to the partition
$\mathcal{B}=\{\phi^{-1}(\alpha')\mid \alpha'\in X'\}$ of $X$ into
\emph{fibres} for $\phi$.

In \cite{titslocal}, Tits uses the term `geometry' for what we call a
pregeometry. He defines quotients in the same way as we do but only
considers those partitions of $X$ that arise as the orbits of a
subgroup of automorphisms of the pregeometry. We use the term
\emph{orbit-quotient} for this kind of quotient to distinguish it from
the more general quotients defined with respect to an arbitrary type-refining
partition of $X$. We shall consider the properties of orbit-quotients as
well as the more general class of quotients of (pre)geometries.

Let $A$ be a group of automorphisms
of a pregeometry $\Gamma$ and consider the orbit-quotient $\Gamma_{/A}$ 
of $\Gamma$ with respect to the type-refining partition $\{\alpha^A\mid
\alpha\in X\}$.

For a flag $F$ of $\Gamma$, $A_F$ denotes the stabilizer of $F$ in $A$.
Note that the orbit set of $A_F$ in the residue $\Gamma_F$ is a
type-refining partition of $\Gamma_F$, inducing an orbit-quotient
$\left(\Gamma_F\right)_{/A_F}$ of $\Gamma_F$. Moreover, this partition
is a refinement of the partition of $\Gamma_F$ into $A$-orbits, and hence
$\pi_{/A}\left(\Gamma_F\right)$ can be viewed as an orbit-quotient of
$\left(\Gamma_F\right)_{/A_F}$. 

Tits uses the following properties that allow one to lift flags from the
quotient to the original geometry. 
\begin{description} \item[(TQ1)] For every flag $F$ in $\Gamma$,
the projection $\pi_{/A}\colon\Gamma\rightarrow\Gamma_{/A}\colon
\alpha\mapsto \alpha^A$ induces an isomorphism between the quotient
$\left(\Gamma_F\right)_{/A_F}$ and the residue
$\left(\Gamma_{/A}\right)_{\pi_{/A}(F)}$ of the projection $\pi_{/A}(F)$ of the flag
in the quotient. \item[(TQ2$'$)] The elements of an orbit of $A$ which
belong to a residue $\Gamma_F$ belong to a single orbit of $A_F$. 
\item[(TQ2$''$)] Suppose $\alpha\ast \beta$ in
$\Gamma$ and $F$ is a flag incident to some element of $\alpha^A$ and
some element of $\beta^A$. Then there exists $a\in A$ such that both
$\alpha^a$ and $\beta^a$ are incident to $F$. 
\item[(TQ3)] In the incidence graph of
$\Gamma$, the distance between two elements of the same orbit of $A$ is
at least $4$. 
\end{description}

We now prove a first result connecting Tits' axioms to Pasini's.
\begin{lemma}\label{lem:alice}
In an orbit-quotient of finite rank, Tits' axioms (TQ2$'$) and
(TQ2$''$) imply (PQ1).
\end{lemma}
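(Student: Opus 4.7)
The plan is to prove (PQ1) by induction on the rank $m$ of the flag $F$. In the orbit-quotient setting $\pi_{/A}(\alpha_i)=\alpha_i^A$, so the hypothesis of (PQ1) reads: for each $i$ there exist $\beta_i\in B$ and $\alpha_i'\in\alpha_i^A$ with $\beta_i\ast\alpha_i'$; equivalently, $B\ast_{/A}\alpha_i^A$ for every $i$. The goal will be to produce a single $y\in B$ incident with every element of $F$. My intended strategy is to apply (TQ2$''$) to drag such an element into the residue of $F\setminus\{\alpha_1\}$, and then to use (TQ2$'$) to normalise it so that it is incident to $\alpha_1$ itself rather than merely to some $A$-translate of $\alpha_1$.

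The base case $m=1$ will need neither of Tits' axioms: writing $\alpha_1'=\alpha_1^a$ with $a\in A$, the element $\beta_1^{a^{-1}}$ lies in $B$ (since $B$ is an $A$-orbit, hence $A$-invariant) and is incident with $\alpha_1$. For the inductive step, suppose $m\geq 2$ and set $F_1=F\setminus\{\alpha_1\}$. Since $B\ast_{/A}\alpha_i^A$ for each $i\geq 2$, the inductive hypothesis applied to $F_1$ yields some $y'\in B\cap\Gamma_{F_1}$. To bring in $\alpha_1$, I will use the hypothesis $B\ast_{/A}\alpha_1^A$ to pick $\gamma\in B$ and $\delta\in\alpha_1^A$ with $\gamma\ast\delta$; then $F_1$ is incident to $\alpha_1\in\delta^A$ and to $y'\in\gamma^A$, so axiom (TQ2$''$) will provide $a\in A$ with both $\delta^a$ and $\gamma^a$ in $\Gamma_{F_1}$.

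The main obstacle is that $\delta^a$ need not equal $\alpha_1$, and this is exactly where (TQ2$'$) enters. Both $\delta^a$ and $\alpha_1$ lie in $\alpha_1^A\cap\Gamma_{F_1}$, which by (TQ2$'$) is a single $A_{F_1}$-orbit, so there exists $b\in A_{F_1}$ with $\delta^{ab}=\alpha_1$. Setting $y:=\gamma^{ab}$, I will verify three things: $y\in B$ by $A$-invariance of $B$; $y\ast\alpha_1$ because $\gamma^a\ast\delta^a$ forces $\gamma^{ab}\ast\delta^{ab}=\alpha_1$; and $y\in\Gamma_{F_1}$ since $\gamma^a\in\Gamma_{F_1}$ and $b\in A_{F_1}$ stabilises $F_1$ elementwise. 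Hence $y$ will be incident to every element of $F$, closing the induction. The finite rank hypothesis is used only to guarantee that the induction terminates.
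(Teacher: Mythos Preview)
Your proof is correct and follows essentially the same route as the paper: induction on the rank $m$ of the flag, removal of one element of $F$ to invoke the inductive hypothesis, an application of (TQ2$''$) to the residue of the reduced flag, and then (TQ2$'$) to normalise back to the original element. The only cosmetic differences are that the paper removes $\alpha_m$ rather than $\alpha_1$, and writes the class $B$ as $\alpha^A$ for a fixed representative $\alpha$.
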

\begin{proof}
We give a proof by induction on the rank of $\Gamma$. In  rank 1 the
lemma is clearly true. Assume the lemma holds in ranks smaller than
$m$.
Suppose we have a flag $F=\{\alpha_1,\alpha_2,\ldots ,\alpha_m\}$ in
$\Gamma$ and an orbit $\alpha^A$ of type not in $t(F)$ such that, for each
$i\in\{ 1,2,\ldots ,m\}$,  an element in the class of $\alpha_i$ is
incident to some element of $\alpha^A$. Since the class of $\alpha_i$ is an $A$-orbit for all $i$, there exist
$\alpha^{a_1},\alpha^{a_2},\ldots ,\alpha^{a_m}$  elements of $\alpha^A$ such that for each
$i\in\{1,2,\ldots ,m\}$ we have $\alpha_i*\alpha^{a_i}$. The induction
hypothesis provides an element $a\in A$ such that
$F\setminus\{\alpha_m\}$ is incident with $\alpha^a$.
Now $\alpha^{a_m}\ast \alpha_m$ and $F\setminus\{\alpha_m\}$ is a flag incident to some element of $(\alpha^{a_m})^A$, namely $\alpha^a$, and some element of $\alpha_m^A$, namely $\alpha_m$.
By (TQ2$''$), we can also find some $b\in A$ such that $\alpha_m^b$ and
$\alpha^{a_mb}$ are both incident with $F\setminus\{\alpha_m\}$. By
(TQ2$'$) we now have that both $\alpha_m$ and $\alpha_m^b$ are
in the same orbit of $A_{F\setminus\{\alpha_m\}}$. Hence we find $c$
in this stabilizer such that $\alpha_m^{bc}=\alpha_m$. Now the element
$\alpha^{a_mbc}$ is incident with the full flag $F$.
\end{proof}
\begin{corollary}
In an orbit-quotient of finite rank, Tits' axioms (TQ2$'$) and (TQ2$''$) imply \textsc{(FlagsLift)}.
\end{corollary}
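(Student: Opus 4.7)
The corollary is immediate from chaining the two preceding lemmas, so my plan is essentially to observe that no new work is required. Specifically, Lemma \ref{lem:alice} already shows that in an orbit-quotient of finite rank, Tits' axioms (TQ2$'$) and (TQ2$''$) together imply Pasini's axiom (PQ1), and Lemma \ref{lem:PQ1impFL} shows that in a quotient of finite rank, (PQ1) implies \textsc{(FlagsLift)}. Composing these two implications gives the corollary.

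Concretely, I would start from the hypothesis that $\Gamma_{/A}$ is an orbit-quotient of finite rank satisfying (TQ2$'$) and (TQ2$''$); then apply Lemma \ref{lem:alice} to deduce that (PQ1) holds for the type-refining partition $\mathcal{B}=\{\alpha^A\mid \alpha\in X\}$; and finally apply Lemma \ref{lem:PQ1impFL} to conclude that \textsc{(FlagsLift)} holds. No induction or additional combinatorial argument is needed beyond what has already been carried out in the two preceding lemmas.

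There is no real obstacle here, since the corollary is stated precisely to record the composite of the two implications already proved. The only thing to be careful about is making sure the finite-rank hypothesis is carried through: both Lemma \ref{lem:alice} and Lemma \ref{lem:PQ1impFL} are stated under the finite-rank assumption, so the hypothesis in the corollary statement is exactly what is needed to invoke them in sequence.
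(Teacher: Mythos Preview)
Your proposal is correct and matches the paper's approach exactly: the corollary is stated without proof immediately after Lemma~\ref{lem:alice}, and is indeed just the composition of Lemma~\ref{lem:alice} with Lemma~\ref{lem:PQ1impFL}.
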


We remark that Lemma \ref{lem:alice} is also useful in understanding
Gelbgras's proof of  (TQ2$'$) and (TQ2$''$) $\Rightarrow$
(TQ1), in particular the surjectivity of the map $\alpha^{A_F}\mapsto \alpha^A$.



\begin{lemma} \label{lem:TQ2'}
In an orbit-quotient of finite rank, Tits' axiom (TQ2$'$) is equivalent to
the fact that for any two flags $F$, $F'$ of $\Gamma$ such that $\pi_{/A}(F)=\pi_{/A}(F')$, there exists $a\in A$ mapping $F$ onto $F'$.
\end{lemma}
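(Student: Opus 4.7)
The plan is to prove the two implications separately, with the easier direction $(\Leftarrow)$ first and the other by induction on the rank of the flags. A basic observation used throughout is that an automorphism preserves types, so since a flag contains at most one element of each type, the setwise stabilizer of any flag coincides with its pointwise stabilizer; in particular any $a\in A$ mapping a flag $F$ to a flag $F'$ with $t(F)=t(F')$ must send the element of $F$ of type $i$ to the element of $F'$ of type $i$.

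For the implication $(\Leftarrow)$, suppose the flag-mapping property holds and let $\alpha^A$ be an orbit and $F$ a flag with $\beta,\gamma\in\alpha^A\cap\Gamma_F$. Since $\beta,\gamma$ have type not in $t(F)$, both $F\cup\{\beta\}$ and $F\cup\{\gamma\}$ are flags, and they have the same projection $\pi_{/A}(F)\cup\{\alpha^A\}$ in $\Gamma_{/A}$. By hypothesis there is $a\in A$ sending $F\cup\{\beta\}$ to $F\cup\{\gamma\}$. Since $a$ preserves types, $a$ fixes $F$ (as a flag of distinct types) and maps $\beta$ to $\gamma$. Thus $a\in A_F$ and $\beta^a=\gamma$, proving (TQ2$'$).

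For the implication $(\Rightarrow)$, assume (TQ2$'$) and proceed by induction on the rank $k$ of $F$. The case $k=1$ is just the definition of an $A$-orbit. For the inductive step, write $F=\{\beta_1,\dots,\beta_k\}$ and $F'=\{\gamma_1,\dots,\gamma_k\}$, arranged so that $\pi_{/A}(\beta_i)=\pi_{/A}(\gamma_i)$ for each $i$. Applying the inductive hypothesis to $F\setminus\{\beta_k\}$ and $F'\setminus\{\gamma_k\}$, which have the same projection of rank $k-1$, yields $a\in A$ with $\beta_i^a=\gamma_i$ for $i<k$. Then $\beta_k^a$ and $\gamma_k$ both lie in the orbit $\beta_k^A$ and both are incident with every element of $F'\setminus\{\gamma_k\}$, so both belong to $\Gamma_{F'\setminus\{\gamma_k\}}$. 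By (TQ2$'$) applied to this rank $k-1$ flag, there exists $b\in A_{F'\setminus\{\gamma_k\}}$ with $(\beta_k^a)^b=\gamma_k$. Since $b$ fixes $F'\setminus\{\gamma_k\}$ pointwise (again by the type-distinct observation), the element $ab\in A$ satisfies $\beta_i^{ab}=\gamma_i$ for all $i$, so $F^{ab}=F'$.

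The argument is essentially bookkeeping, so there is no serious obstacle; the only point requiring care is the tacit use of type-preservation to conclude that the $a$ arising from the hypothesis in $(\Leftarrow)$, and the $b$ arising from (TQ2$'$) in $(\Rightarrow)$, fix the relevant subflags elementwise and not merely setwise. Once this is noted, both implications fall out cleanly.
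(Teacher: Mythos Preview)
Your proof is correct and follows essentially the same approach as the paper's. Both directions match: the $(\Leftarrow)$ direction is identical, and for $(\Rightarrow)$ the paper also uses induction together with (TQ2$'$) to fix one more element at each step---the only cosmetic difference is that the paper phrases the induction as building up the matched prefix $\{\alpha_1',\dots,\alpha_j'\}$ of $F'$ step by step, while you phrase it as induction on the rank by stripping off one element; unrolling your induction gives exactly the paper's argument.
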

\begin{proof}
Assume first that  for any two flags $F$, $F'$ of $\Gamma$ such that $\pi_{/A}(F)=\pi_{/A}(F')$, there exists $a\in A$ mapping $F$ onto $F'$.
Let $\tilde{F}$ be a flag of $\Gamma$ and $\alpha,\alpha'$ be two elements of the residue $\Gamma_{\tilde{F}}$ in the same $A$-orbit. Then  $\pi_{/A}(\tilde{F}\cup\{\alpha\})=\pi_{/A}(\tilde{F}\cup\{\alpha'\})$. Thus there exists $a\in A$ mapping $\tilde{F}\cup\{\alpha\}$ onto $\tilde{F}\cup\{\alpha'\}$. As $A$ is type-preserving, $a$ stabilises $\tilde{F}$, and so  (TQ2$'$) holds.

Now assume  (TQ2$'$) holds and let  $F$, $F'$ be two flags of $\Gamma$ such that $\pi_{/A}(F)=\pi_{/A}(F')$. As the rank is finite, we can number the elements of the flags $F=\{\alpha_1,\alpha_2,\ldots ,\alpha_k\}$ and  $F=\{\alpha_1',\alpha_2',\ldots ,\alpha_k'\}$ such that $\alpha_i$ is in the same $A$-orbit as  $\alpha_i'$ for all $i$.
We will now show by induction that there exists an element of $A$ mapping $F$ to a flag containing $\{\alpha_1',\ldots,\alpha_i'\}$ for all $i\in \{1,\ldots,k\}$.
This is obvious for $i=1$, taking the element of $A$ which maps $\alpha_1$ to $\alpha_1'$.
Now suppose it is true for $i=j<k$ and let us prove it is true for $i=j+1$.
Let $a\in A$ mapping $F$ onto a flag containing $F_j:=\{\alpha_1',\ldots,\alpha_j'\}$, that is, $\alpha_\ell^a=\alpha_\ell'$ for all $\ell\leq j$. Now $\alpha_{j+1}'$ and $\alpha_{j+1}^{a}$ are both in the residue of $F_j$ and are in the same $A$-orbit. Therefore, by (TQ2$'$), there exists an element $b$ in $A_{F_j}$ mapping $\alpha_{j+1}^{a}$ onto $\alpha_{j+1}'$. Then $ab$ is an element of $A$ mapping $F$ onto a flag contanining $\{\alpha_1',\ldots,\alpha_{j+1}'\}$. This concludes the induction.
For $i=k$, we get an element of $A$ mapping $F$ onto $F'$.
\end{proof}

\begin{lemma}
An orbit-quotient of finite rank satisfying both \textsc{(FlagsLift)}
and  (TQ2$'$) also satisfies (TQ2$''$).
\end{lemma}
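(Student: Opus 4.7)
The plan is to use \textsc{(FlagsLift)} to lift the problem into $\Gamma$, and then use (TQ2$'$) (via Lemma \ref{lem:TQ2'}) twice to transport the lift back onto the original data.

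Concretely, suppose $\alpha*\beta$ in $\Gamma$ and $F$ is a flag such that some $\alpha_0\in\alpha^A$ and some $\beta_0\in\beta^A$ are each incident with $F$. Since $\alpha_0$ and $\beta_0$ have the same type as $\alpha$ and $\beta$ respectively, the types of $\alpha$ and $\beta$ are distinct from each other and from the types appearing in $F$. Because projections preserve incidence, it follows that $\pi_{/A}(F)\cup\{\pi_{/A}(\alpha),\pi_{/A}(\beta)\}$ is a flag in $\Gamma_{/A}$. Apply \textsc{(FlagsLift)} to obtain a flag of $\Gamma$ that projects onto this; writing it as $F'\cup\{\alpha',\beta'\}$ with $\alpha'\in\alpha^A$, $\beta'\in\beta^A$ and $\pi_{/A}(F')=\pi_{/A}(F)$.

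Now apply Lemma \ref{lem:TQ2'} to the two flags $\{\alpha,\beta\}$ and $\{\alpha',\beta'\}$ of $\Gamma$: these project to the same flag of $\Gamma_{/A}$, so there exists $e\in A$ with $\{\alpha,\beta\}^e=\{\alpha',\beta'\}$; since $A$ preserves types, $\alpha^e=\alpha'$ and $\beta^e=\beta'$. Apply Lemma \ref{lem:TQ2'} a second time to the flags $F$ and $F'$ to obtain $f\in A$ with $F^f=F'$. Setting $a=ef^{-1}$, the flag $F'\cup\{\alpha',\beta'\}=F'\cup\{\alpha^e,\beta^e\}$ can be pulled back by $f^{-1}$ to yield the flag $F\cup\{\alpha^{a},\beta^{a}\}$, showing that both $\alpha^a$ and $\beta^a$ are incident with $F$, as required.

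There is no real obstacle here beyond packaging the argument carefully. The only mild point to verify is that the projected set really is a flag of $\Gamma_{/A}$ (which follows from the type analysis above) and that the element $e$ produced by Lemma \ref{lem:TQ2'} separately sends $\alpha$ to $\alpha'$ and $\beta$ to $\beta'$ rather than merely permuting the two-element sets, which is guaranteed by type-preservation because $t(\alpha)\neq t(\beta)$.
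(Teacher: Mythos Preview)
Your proof is correct and follows essentially the same strategy as the paper: lift $\pi_{/A}(F)\cup\{\alpha^A,\beta^A\}$ via \textsc{(FlagsLift)}, then use (TQ2$'$) through Lemma~\ref{lem:TQ2'} to align the lifted flag with the original data. The only cosmetic difference is that you invoke Lemma~\ref{lem:TQ2'} twice symmetrically (once on $\{\alpha,\beta\}$ versus $\{\alpha',\beta'\}$, once on $F$ versus $F'$), whereas the paper first applies the raw (TQ2$'$) axiom in the residue $\Gamma_{\alpha^a}$ to force the lifted pair to have a common exponent, and then applies Lemma~\ref{lem:TQ2'} once to align the remaining part with $F$; your packaging is arguably a bit cleaner.
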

\begin{proof}
Suppose we have a flag $F$ of $\Gamma$ and two elements $\alpha$ and
$\beta$ which are incident and such that the residue $\Gamma_F$
intersects both orbits $\alpha^A$ and $\beta^A$. This means that
$\pi_{/A}(F)\cup\{\alpha^A,\beta^A\}$ is a flag in $\Gamma_{/A}$. This
flag must lift to a flag $\tilde{F}$ in $\Gamma$. Let us write
$\alpha^{a}$ and $\beta^{b}$ for the elements of
$\tilde{F}$ in $\alpha^A$ and $\beta^A$ respectively. Since
$\alpha^{a}$ and $\beta^{a}$ are incident (TQ2$'$)
provides $c\in A_{\alpha^{a}}$ such that
$\beta^{bc}=\beta^{a}$. The image $\tilde{F}^c$ is a flag containing $\alpha^a$ and $\beta^a$.
Now $\tilde{F}^c\setminus\{ \alpha^a,\beta^a\}$ and $F$ are two flags which $\pi_{/A}$ projects onto $\pi_{/A}(F)$. By Lemma \ref{lem:TQ2'}, there exists $d\in A$ mapping $\tilde{F}^c\setminus\{ \alpha^a,\beta^a\}$ onto $F$. Then $\tilde{F}^{cd}=F\cup\{\alpha^{ad},\beta^{ad}\}$, and (TQ2$''$) holds.
\end{proof}

Tits remarks that for finite rank, (TQ1) is equivalent to the
conjunction of (TQ2$'$) and 
(TQ2$''$) and that (TQ3) implies (TQ1). Detailed proofs of these
assertions, together with counterexamples for all other implications
can be found in \cite{Gelbgras88}. One of these counterexamples given
in~\cite{Gelbgras88} and discussed in the next paragraph is
interesting in that it illustrates a subtle 
difference between (TQ1) and the condition that $\pi_{/A}$ is
residually surjective.

We now summarize the above discussion in a proposition.
\begin{proposition}
Let $\Gamma$ be a geometry of finite rank with orbit-quotient $\Gamma_{/A}$.
\begin{enumerate}
\item {\rm (TQ1)} $\Leftrightarrow$ ({\rm (TQ2$'$)} and {\rm (TQ2$''$)})
\item {\rm (TQ3)} $\Rightarrow$ {\rm (TQ1)} $\Rightarrow$ {\rm (PQ1)} $\Rightarrow$  \textsc{(FlagsLift)}
\item \textsc{(FlagsLift)} and {\rm (TQ2$'$)} implies {\rm (TQ2$''$)}.
\end{enumerate}
\end{proposition}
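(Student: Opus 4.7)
My plan is to observe that every assertion in this proposition has essentially been assembled earlier in the section, so the proof will just glue the pieces together in the correct order rather than introducing any new argument. What is needed is to identify, for each implication, the statement already in the text that does the work.

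For part 1, the equivalence (TQ1) $\Leftrightarrow$ ((TQ2$'$) and (TQ2$''$)) is the remark of Tits in \cite{titslocal}; I would simply quote this together with the detailed verification in Gelbgras \cite{Gelbgras88}. The first implication of part 2, (TQ3) $\Rightarrow$ (TQ1), is also established in \cite{Gelbgras88} and would be cited rather than reproved. The remaining two implications of part 2 are then immediate from material already in the section: granted part 1, (TQ1) yields both (TQ2$'$) and (TQ2$''$), and Lemma \ref{lem:alice} converts these into (PQ1), so that (TQ1) $\Rightarrow$ (PQ1); finally, (PQ1) $\Rightarrow$ \textsc{(FlagsLift)} is exactly Lemma \ref{lem:PQ1impFL}.

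Part 3 is the statement of the lemma proved immediately before the proposition, so nothing further is required beyond a reference. In assembling the chain one should note that the finite-rank hypothesis is used at each step where induction on the rank of a flag appears (Lemmas \ref{lem:PQ1impFL}, \ref{lem:alice} and \ref{lem:TQ2'}), so I would flag explicitly that the finite-rank assumption in the proposition is exactly what is needed to invoke these lemmas.

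I do not anticipate a real obstacle here: the only ``content'' is making sure the implications are chained in a valid order (in particular, that part 1 is invoked before using it inside part 2) and that the cited results from \cite{titslocal,Gelbgras88} are quoted precisely. The most delicate point, should one wish to avoid external citation, would be re-deriving (TQ3) $\Rightarrow$ (TQ1) internally, but since the text explicitly defers this to \cite{Gelbgras88}, I would follow suit.
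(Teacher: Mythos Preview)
Your proposal is correct and matches the paper's approach exactly: the proposition is presented in the paper as a summary of the preceding discussion with no separate proof, and the pieces you identify (the Tits/Gelbgras citations for part~1 and for (TQ3)~$\Rightarrow$~(TQ1), Lemma~\ref{lem:alice} and Lemma~\ref{lem:PQ1impFL} for the rest of part~2, and the lemma immediately preceding the proposition for part~3) are precisely the ingredients the paper has assembled.
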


We note that we showed in Lemma \ref{lem:iscovering} that for arbitrary quotients, if $\pi_{/\mathcal{B}}$ is surjective on corank 1 residues then (TQ3) implies that $\Gamma$ is a cover of $\Gamma_{/\mathcal{B}}$ and so by Lemma \ref{lem:coveringflags} \textsc{(FlagsLift)} holds. Lemma \ref{lem:3inj} showed that if (TQ3) holds and $\mathcal{B}$ is the set of orbits of some subgroup then $\pi_{/\mathcal{B}}$ is surjective on corank 1 residues.

\subsubsection*{Property (TQ1) versus residual surjectivity}
Let $\Gamma$ be the following rank 3 geometry: there are exactly two
elements of each type in $\Gamma$ and two elements are incident
whenever their types differ. This is a firm residually connected
geometry. Let us write the elements of type $i$ in this geometry as
$\alpha_i$ and $\alpha_i'$, for $i=0,1,2$. As a group $A$ of
automorphisms we take the group generated by the permutations
$(\alpha_0,\alpha_0')(\alpha_1,\alpha_1')$ and
$(\alpha_0,\alpha_0')(\alpha_2,\alpha_2')$. Looking at the residue of
the flag $F=\{\alpha_1,\alpha_2\}$ we see that (TQ2$'$) is not satisfied
since $A_F$ is trivial while $X_F=\{\alpha_0,\alpha_0'\}$. Hence (TQ1)
is also not satisfied. Indeed, we see that
$\left(\Gamma_{/A}\right)_{\pi_{/A}(F)}$ has just one element, whereas
$\left(\Gamma_F\right)_{/A_F}$ has two. Hence these two geometries
cannot be isomorphic. However, it should be remarked that when we
consider the partition $\mathcal{B}$ of the elements of $\Gamma$ into
orbits of $A$, the projection $\pi_{/\mathcal{B}}$ is residually
surjective. So there is a subtle difference  between (TQ1) and the
property of $\pi_{/\mathcal{B}}$ being residually surjective, the
latter one merely expressing the fact that taking residues and
projecting onto the quotient are commuting operations. We could view
(TQ1) as meaning that taking residues and taking quotients commute, if we
view the stabilizer $A_F$ of the flag as the natural automorphism group
with which to quotient the residue $\Gamma_F$. This again supports our decision to distinguish between general quotients and orbit-quotients.

\subsubsection*{Tits axioms and covers}
An important class of quotients is formed by coverings as defined in
Section \ref{sec:covers}.  
When $\mathcal{B}$ is the partition of the element set of a pregeometry
$\Gamma$ into singletons, then $\Gamma$ is trivially a cover of the
quotient $\Gamma_{/\mathcal{B}}$, taking $\pi_{/\mathcal{B}}$ as a
covering morphism.

In fact we saw in Lemma~\ref{lem:semireg} that in the case where an
orbit-quotient $\Gamma_{/A}$ is a cover, the stabiliser in $A$ of a
nonempty flag is always trivial. Thus 
(TQ1) is satisfied and also residual surjectivity holds in this case. 
 We note that the
two-dimensional version of Example~\ref{eg:affine} also appears
in~\cite{Gelbgras88}, as an example showing that (TQ3) is stronger than
(TQ1). 

\section{Inherited properties of quotients}
\label{sec:props}

\subsection{Connectivity of residues, truncations and quotients}

Let $J$ be a subset of the type set of a pregeometry $\Gamma=(X, *,
t)$. The \emph{$J$-truncation} of $\Gamma$ is the pregeometry
$\Gamma^J=(X^J, *^J, t^J)$ where $X^J=t^{-1}(J)$ and
incidence is inherited from $\Gamma$. The type function is the
restriction $t^J:X^J\rightarrow J$ of $t$. A path for which all
elements, except possibly the endpoints $p$ and $q$, are of type $i$
or $j$ is called an \emph{$\{i,j\}$-path}. 

We collect together the following results.
\begin{theorem}
\begin{enumerate}
\item \cite{BueCoh}
Let $\Gamma$ be a 
residually connected pregeometry with finite type set $I$, and let
$i,j\in I$ be distinct. Then for any $p,q\in X$, there is an
$\{i,j\}$-path from $p$ to $q$.  
\item \cite{BueCoh}
A residually connected pregeometry is a geometry if and only if no
flag of corank~$1$ is maximal.
\item \cite{BuSc84}
Let $\Gamma$ be a geometry over a finite set $I$. Then $\Gamma$ is
residually connected if and only if, for every two distinct types $i$
and $j$ and every flag $F$ of $\Gamma$ having no elements of type $i$
or $j$, the $\{i,j\}$-truncation of $\Gamma_F$ is
nonempty and connected.
\item All rank~$2$ truncations of a residually connected geometry are
connected.
\end{enumerate}
\end{theorem}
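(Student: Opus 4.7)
The statement bundles four assertions, and the first three (parts 1, 2, 3) are quoted from the references \cite{BueCoh} and \cite{BuSc84}. For those I would simply cite the original proofs rather than reproduce them; no new work is needed. The only genuinely new content is part 4, and my plan is to deduce it immediately from part 1 (with part 3, applied to $F=\varnothing$, giving an equivalent derivation).

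The plan is as follows. Fix a residually connected geometry $\Gamma=(X,\ast,t)$ and two distinct types $i,j\in I$. To show the rank $2$ truncation $\Gamma^{\{i,j\}}$ is connected I first unpack what its incidence graph is: its vertex set is $X^{\{i,j\}}=X_i\cup X_j$ and its edges are precisely the rank~$2$ flags of $\Gamma$ contained in $X^{\{i,j\}}$, so it is the subgraph of the incidence graph of $\Gamma$ induced on $X^{\{i,j\}}$. Next I would pick arbitrary $p,q\in X^{\{i,j\}}$ and apply part 1: since $\Gamma$ is residually connected and $i\neq j$, there exists an $\{i,j\}$-path in $\Gamma$ from $p$ to $q$. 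By the definition of an $\{i,j\}$-path every interior vertex has type $i$ or $j$, and by our choice the endpoints $p,q$ also lie in $X_i\cup X_j$. Hence the whole path is contained in the induced subgraph on $X^{\{i,j\}}$, so $p$ and $q$ are connected in $\Gamma^{\{i,j\}}$.

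As a cross-check I would note the alternative derivation from part 3: taking $F=\varnothing$ (permissible because $I$ is finite and $F$ has no elements of type $i$ or $j$), the residue $\Gamma_F$ is just $\Gamma$ itself, so part 3 applied at the empty flag says exactly that $\Gamma^{\{i,j\}}$ is nonempty and connected. This makes part 4 look like the special case of part 3 at the empty flag, which is conceptually the cleanest way to phrase it.

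Since the argument is a one-line invocation of part 1 (or part 3) I do not expect any obstacle; the only thing to be careful about is the subtle issue that the endpoints of an $\{i,j\}$-path need not have type in $\{i,j\}$ in general, so I must explicitly observe that I chose them in $X^{\{i,j\}}$ precisely so that they do.
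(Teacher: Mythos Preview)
Your proposal is correct. The paper itself provides no proof of this theorem: it is introduced with ``We collect together the following results'' and parts 1--3 carry citations, while part 4 is left unproved and uncited, presumably because it is an immediate consequence of part 3 (taking $F=\varnothing$) or of part 1, exactly as you observe. Your derivation of part 4 from part 1 is sound, and your cross-check via part 3 with the empty flag is precisely the intended triviality.
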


The geometry in Figure \ref{fig:conneg} is an example of a geometry
which is not residually connected but has all rank 2 truncations
connected. 

\begin{figure}
\begin{center}
\includegraphics{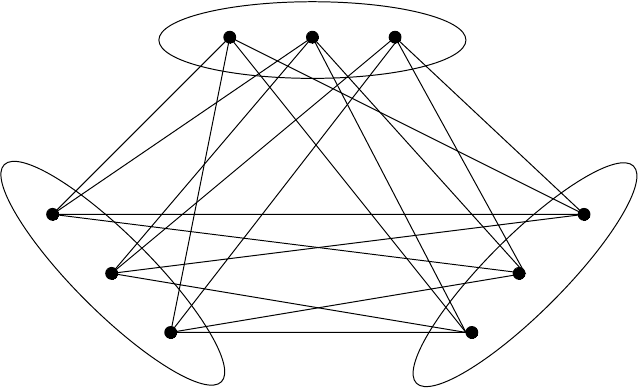}
\end{center}
\caption{A geometry
which is not residually connected, despite all rank 2 truncations being
connected}\label{fig:conneg}
\end{figure}

\begin{theorem}
\label{thm:connected}
Let $\Gamma=(X,*,t)$ be a pregeometry with type-refining partition
$\mathcal{B}$. \begin{enumerate} 
\item If $\Gamma$ is connected then $\Gamma_{/\mathcal{B}}$ is connected.
\item For each $J\subseteq I$, if the $J$-truncation $\Gamma^J$ is connected then also $(\Gamma_{/\mathcal{B}})^J=(\Gamma^J)_{/\mathcal{B}}$ is connected.
If each rank 2 truncation of $\Gamma$ is connected then each
  rank 2 truncation of $\Gamma_{/\mathcal{B}}$ is connected. 
\end{enumerate}
\end{theorem}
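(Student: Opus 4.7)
The plan is to exploit the fact that the projection $\pi_{/\mathcal{B}}$ is a pregeometry morphism: because $\mathcal{B}$ is type-refining, two distinct incident elements of $\Gamma$ (which necessarily have different types, by the pregeometry axiom) project to two \emph{distinct} blocks that are incident in $\Gamma_{/\mathcal{B}}$ by the very definition of $*_{/\mathcal{B}}$. Consequently, any path in the incidence graph of $\Gamma$ projects to a walk in the incidence graph of $\Gamma_{/\mathcal{B}}$, and connectivity transfers from $\Gamma$ to $\Gamma_{/\mathcal{B}}$.

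For part~(1), I would take distinct $B_1,B_2\in\mathcal{B}$, choose representatives $\alpha_i\in B_i$, use connectivity of $\Gamma$ to obtain a path $\alpha_1=\gamma_0,\gamma_1,\ldots,\gamma_k=\alpha_2$ in the incidence graph of $\Gamma$, and observe by the argument above that $\pi_{/\mathcal{B}}(\gamma_0),\ldots,\pi_{/\mathcal{B}}(\gamma_k)$ is a walk from $B_1$ to $B_2$ in the incidence graph of $\Gamma_{/\mathcal{B}}$.

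For part~(2), the key step is the identity $(\Gamma_{/\mathcal{B}})^J=(\Gamma^J)_{/\mathcal{B}}$, where on the right $\mathcal{B}$ is understood as its restriction $\{B\in\mathcal{B}\mid t_{/\mathcal{B}}(B)\in J\}$, a type-refining partition of $X^J$. Both sides have the same element set, and in both constructions two blocks $B_1,B_2$ of types in $J$ are declared incident precisely when there exist $\alpha_i\in B_i$ with $\alpha_1*\alpha_2$ in $\Gamma$ (the restricted incidence $*^J$ agrees with $*$ on $X^J$). Once this identification is in hand, the connectivity claim follows by applying part~(1) to $\Gamma^J$ with its induced type-refining partition; the rank two truncation statement is then the special case $J=\{i,j\}$ with distinct $i,j\in I$. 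The only step requiring any care is the identification $(\Gamma_{/\mathcal{B}})^J=(\Gamma^J)_{/\mathcal{B}}$, and even this is a routine unraveling of definitions, so I expect no genuine obstacle: everything rests on the fact that pregeometry morphisms send walks to walks.
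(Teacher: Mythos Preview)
Your proposal is correct and follows essentially the same approach as the paper: pick representatives in the blocks, lift a path in $\Gamma$, and project it to a walk in $\Gamma_{/\mathcal{B}}$; for part~(2), apply part~(1) to the truncation $\Gamma^J$ after noting $(\Gamma_{/\mathcal{B}})^J=(\Gamma^J)_{/\mathcal{B}}$. The paper's proof is even terser (it does not spell out the identification of the two truncations or the distinctness of consecutive projected blocks), but the underlying argument is the same.
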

\begin{proof}
\begin{enumerate}
\item
Let $B,C\in\mathcal{B}$ and let $\alpha\in B$ and $\beta\in C$. As
$\Gamma$ is connected, there exists a path
$\alpha=\delta_0,\delta_1,\delta_2,\ldots,\delta_{l-1},\delta_l=\beta$
in the incidence graph of $\Gamma$. For each $\delta_i$, let $D_i$ be
the unique part of $\mathcal{B}$ containing $\delta_i$. Then
$B=D_0,D_1,\ldots,D_l=C$ is a path in the incidence graph of
$\Gamma_{/\mathcal{B}}$ and so $\Gamma_{/\mathcal{B}}$ is connected. 
\item
This follows from part (1) applied to the connected $J$-truncation of $\Gamma$. 
\end{enumerate}
\end{proof}

\subsection{Firmness}

Recall that
a geometry is said to be firm if every corank 1 flag is contained in
at least two chambers. Some sources include firmness in the definition
of a geometry, for example \cite{Pasi94}. 

One obvious way that firmness will not be preserved by quotients is if
there exists $i\in I$ such that $t^{-1}(i)\in \mathcal{B}$. Then the
quotient pregeometry $\Gamma_{/\mathcal{B}}$ only has one element of
type $i$ and so flags of type $I\setminus\{i\}$ in $\Gamma_{/\mathcal{B}}$ will not be contained in at least two chambers. One suggestion is that when quotienting we should ignore those
types which have a unique element. Alternatively, we may keep the
same rank and adjust the definition of firmness to require only that
flags are contained in at least two chambers whenever this is
feasible, that is, whenever $F$ is a flag and $i\in I\setminus t(F)$ such that there is more than one element of type $i$ in the geometry, the residue $\Gamma_F$ contains at least two elements of type $i$.

Even with this technicality, the quotient of a firm geometry (even a normal quotient) is not
necessarily firm as the following example shows. 
\begin{example}
\label{eg:notfirm}
Let $\Sigma$ be a complete multipartite graph with $m$ parts of size
$n$. For each integer $i$ with $1<i<n$, we define a rank 3 pregeometry $\Gamma^{(i)}$ whose elements are the vertices, edges and
$K_{i,i}$-subgraphs of $\Sigma$ with incidence being the inclusion
inherited from $\Sigma$. Each flag consisting of a vertex and an edge
is contained in $\binom{n-1}{i}^2$ chambers, each flag consisting of
an edge and a $K_{i,i}$ is contained in 2 chambers, while each flag
consisting of a vertex and a $K_{i,i}$ is contained in $i$
chambers. Hence $\Gamma^{(i)}$ is a firm geometry. Since $\Sigma$ is connected
the $\{\text{vertex,edge}\}$-truncation and the $\{\text{vertex,
}K_{i,i}\}$-truncation of $\Gamma^{(i)}$ are connected. However, the
$\{\text{edge, }K_{i,i}\}$-truncation is disconnected with
$\binom{m}{2}$ 
 connected components, each corresponding to a pair of partite
 blocks. The whole geometry $\Gamma^{(i)}$ is connected. 

The group $G=S_n\Wr S_m$ acts flag-transitively on $\Gamma^{(i)}$. Let
$N=S_n^m\norml G$. Then $N$ has $m$ orbits on vertices (the partite
blocks), $\binom{m}{2}$ orbits on edges (corresponding to pairs of
partite blocks) and $\binom{m}{2}$ orbits on $K_{i,i}$-subgraphs
(corresponding to pairs of partite blocks).  

In the quotient pregeometry $\Gamma_{/N}$, an incident (edge-orbit,
$K_{i,i}$-orbit) pair is contained in two chambers, an incident
(vertex-orbit, $K_{i,i}$-orbit) pair is contained in  one chamber and
an incident (vertex-orbit,edge-orbit) is contained in one
chamber. Hence $\Gamma_{/N}$ is a geometry but it is not firm. 
\end{example}

Example \ref{eg:notfirm} does not have connected rank 2 truncations. We have also examples of firm geometries with connected rank 2 truncations such that some quotient is a geometry but not firm.

\begin{example}
Let $I=\{1,2,3\}$ and $X=\{(i,j)\mid i,j\in I\}$. Define $t:X\rightarrow I$ by $t((i,j))=i$ and incidence by $(i,j)*(h,k)$ if and only if $i\neq h$ and $j\neq k$. Then the geometry $\Gamma=(X,*,t)$ is the complement of the $3\times 3$ grid and has connected rank 2 truncations. Let $\mathcal{B}=\{\{(i,1),(i,2)\}, \{(i,3)\}\mid i\in I\}$. Then in $\Gamma_{/\mathcal{B}}$, the flag $\{\{(1,1),(1,2)\},\{(2,3)\}\}$ is contained in a unique chamber, namely $\{\{(1,1),(1,2)\},\{(2,3)\},\{(3,1),(3,2)\}\}$.
\end{example}

\section{Group actions and coset pregeometries}
\label{sec:gractions}

Let $\Gamma$ be a pregeometry.  We refer to flags of type $J$ as
\emph{$J$-flags}. Given a group $G$ inducing automorphisms of
$\Gamma$, we say that $G$ is \emph{transitive on $J$-flags} if $G$
acts transitively on the set of all $J$-flags of $\Gamma$; and we say
that $G$ is \emph{flag-transitive} on  $\Gamma$ if $G$ is transitive
on $J$-flags for each $J\subseteq I$. 
Following \cite{gram}, we say that $G$ is \emph{incidence-transitive} on $\Gamma$ if for each
$J\subseteq I$ of size two, $G$ is transitive on $J$-flags. If for each $i\in I$, $G$ is transitive on the set $X_i$ of element of type $i$ we say that $G$ is \emph{vertex-transitive} on $\Gamma$.  Finally,
we say that $G$ is \emph{chamber-transitive} on $\Gamma$ if $G$ is
transitive on the set of all chambers. If $\Gamma$ is a geometry, since every flag is contained in a chamber, $G$ is flag-transitive on $\Gamma$ if and only if $G$ is
chamber-transitive on $\Gamma$. 

\remark
Given a partition $\mathcal{B}$ of $X$ and $G\leqslant\Aut(\Gamma)$,
$G$ induces automorphisms of $\Gamma_{/\mathcal{B}}$ if and only if
$\mathcal{B}$ is $G$-invariant. 
If $\mathcal{B}$ is the set of orbits of a subgroup $A$ of $G$, then
$\mathcal{B}$ is $G$-invariant if and only if the normal closure $N$
of $A$ in $G$ fixes each $A$-orbit setwise. In this case
$\Gamma_{/\mathcal{B}}$ is the normal quotient $\Gamma_{/N}$.

Given a pregeomety $\Gamma$ with type set $I$ and type-refining
partition $\mathcal{B}$, if $J\subseteq I$ we say that \emph{$J$-flags
  lift} if for each flag $F_{\mathcal{B}}$ of $\Gamma_{/\mathcal{B}}$
of type $J$, there exists a flag $F$ of $\Gamma$ such that
$\pi_\mathcal{B}(F)=F_{\mathcal{B}}$. 

We have the following theorem concerning flag-transitivity which implies Theorem \ref{thm:flagtrans}.

\begin{theorem}
\label{thm:ftrans}
Let $\Gamma=(X,*,t)$ be a pregeometry with type set $I$ and let
$J\subseteq I$ such that $\Gamma$ has $J$-flags. Suppose that
$G\leqslant \Aut(\Gamma)$ is transitive on $J$-flags and let
$\mathcal{B}$ be a type-refining partition preserved by $G$. Then $G$
is transitive on $J$-flags of $\Gamma_{/\mathcal{B}}$ if and only if
$J$-flags lift. 
\end{theorem}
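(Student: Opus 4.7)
The plan is to prove both directions by exploiting the fact that, since $\mathcal{B}$ is $G$-invariant, the projection $\pi_{/\mathcal{B}}$ is $G$-equivariant: that is, $\pi_{/\mathcal{B}}(g(\alpha))=g(\pi_{/\mathcal{B}}(\alpha))$ for every $g\in G$ and $\alpha\in X$, and this extends to flags by applying it component-wise. This equivariance is what makes both implications essentially one-line arguments once the right flag is chosen.

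For the ``only if'' direction, I would fix an arbitrary $J$-flag $F_{\mathcal{B}}$ of $\Gamma_{/\mathcal{B}}$ and use the hypothesis that $\Gamma$ has $J$-flags to pick some $J$-flag $F_0$ of $\Gamma$. Its image $\pi_{/\mathcal{B}}(F_0)$ is a $J$-flag in $\Gamma_{/\mathcal{B}}$, so by transitivity of $G$ on $J$-flags of $\Gamma_{/\mathcal{B}}$ there exists $g\in G$ with $g(\pi_{/\mathcal{B}}(F_0))=F_{\mathcal{B}}$. Equivariance of $\pi_{/\mathcal{B}}$ then gives $\pi_{/\mathcal{B}}(g(F_0))=F_{\mathcal{B}}$, and $g(F_0)$ is still a $J$-flag of $\Gamma$ since $G$ acts by type-preserving automorphisms. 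So $J$-flags lift.

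For the ``if'' direction, I would take two $J$-flags $F_{\mathcal{B}}$ and $F'_{\mathcal{B}}$ of $\Gamma_{/\mathcal{B}}$ and use the lifting hypothesis to produce $J$-flags $F,F'$ of $\Gamma$ projecting to them respectively. Transitivity of $G$ on $J$-flags of $\Gamma$ yields $g\in G$ with $g(F)=F'$, and again equivariance gives $g(F_{\mathcal{B}})=\pi_{/\mathcal{B}}(g(F))=\pi_{/\mathcal{B}}(F')=F'_{\mathcal{B}}$, establishing the desired transitivity in the quotient.

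There is no real obstacle; the only thing to check carefully is that $G$ genuinely induces type-preserving automorphisms of $\Gamma_{/\mathcal{B}}$ so that ``$J$-flag'' is a meaningful $G$-invariant notion there — this follows from $\mathcal{B}$ being a type-refining partition preserved by $G$, as noted in the remark preceding the statement. Applying the theorem with $J=I$ (and observing that a geometry automatically has $I$-flags, i.e.\ chambers) immediately yields Theorem~\ref{thm:flagtrans}.
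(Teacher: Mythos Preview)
Your proof is correct and follows the same two-direction argument as the paper: lift and use transitivity in $\Gamma$ for one direction, project and use transitivity in $\Gamma_{/\mathcal{B}}$ for the other, with $G$-equivariance of $\pi_{/\mathcal{B}}$ doing the work in both. Your explicit remark that the lifted flag automatically has type $J$ (because $\mathcal{B}$ is type-refining) matches the paper's observation exactly.

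One small caveat on your closing sentence: deducing Theorem~\ref{thm:flagtrans} is not quite a single application with $J=I$. The condition \textsc{(FlagsLift)} concerns flags of \emph{every} type, and flag-transitivity on $\Gamma_{/\mathcal{B}}$ likewise means transitivity on $J$-flags for every $J$; so one really applies the present theorem for each $J\subseteq I$ (this is how the paper phrases it, saying the theorem ``implies'' Theorem~\ref{thm:flagtrans}).
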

\begin{proof}
Suppose first that for each $J$-flag $F_\mathcal{B}$ of
$\Gamma_{/\mathcal{B}}$, there exists a flag $F$ of $\Gamma$ such that
$\pi_\mathcal{B}(F)=F_\mathcal{B}$. Since $\mathcal{B}$ is
type-refining, $F$ also has type $J$. Then if $F_\mathcal{B}$ and
$F_\mathcal{B}'$ are $J$-flags of $\Gamma_{/\mathcal{B}}$, there
exist $J$-flags $F,F'$ of $\Gamma$ which project onto $F_\mathcal{B}$
and $F_\mathcal{B}'$ respectively. Since $G$ acts transitively on
$J$-flags of $\Gamma$, there exists $g\in G$ such that $F^g=F'$ and
hence $F_\mathcal{B}^g=F_\mathcal{B}'$. Thus $G$ is transitive on
$J$-flags of $\Gamma_{/\mathcal{B}}$. 

Conversely, suppose that $G$ is transitive on $J$-flags of
$\Gamma_{/\mathcal{B}}$. Now there exists a flag $F$ of $\Gamma$ of
type $J$ and  $\pi_\mathcal{B}(F)$ is a flag of
$\Gamma_{/\mathcal{B}}$ of type $J$. Given any flag $F_\mathcal{B}$ of
$\Gamma_{/\mathcal{B}}$ of type $J$, there exists  $g\in G$ such that
$\pi_\mathcal{B}(F)^g=F_\mathcal{B}$. Moreover, $F^g$ is a flag of
$\Gamma$ which projects onto $F_\mathcal{B}$. 
\end{proof}
\begin{corollary}
\label{cor:inctrans}
If $G$ is incidence-transitive on $\Gamma$ then $G$ is also
incidence-transitive on $\Gamma_{/\mathcal{B}}$. 
\end{corollary}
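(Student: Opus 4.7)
The plan is to apply Theorem~\ref{thm:ftrans} separately for each subset $J\subseteq I$ of size $2$. The key observation I would make first is that rank~$2$ flags always lift from $\Gamma_{/\mathcal{B}}$ to $\Gamma$, directly from the definition of the incidence $*_{/\mathcal{B}}$. Indeed, given any $J$-flag $\{B_1,B_2\}$ of $\Gamma_{/\mathcal{B}}$, the definition of $*_{/\mathcal{B}}$ provides $\alpha_i\in B_i$ with $\alpha_1*\alpha_2$, and since $\mathcal{B}$ is type-refining we have $t(\alpha_i)=t_{/\mathcal{B}}(B_i)$, so $\{\alpha_1,\alpha_2\}$ is a $J$-flag of $\Gamma$ projecting onto $\{B_1,B_2\}$.

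Next I would handle the hypothesis of Theorem~\ref{thm:ftrans} that $\Gamma$ has $J$-flags: if $\Gamma_{/\mathcal{B}}$ has no $J$-flag then $G$ is vacuously transitive on the (empty) set of $J$-flags of $\Gamma_{/\mathcal{B}}$; otherwise, the lifting observation above guarantees that $\Gamma$ also has a $J$-flag of type $J$, so the theorem applies. Since $G$ is transitive on the $J$-flags of $\Gamma$ by hypothesis, Theorem~\ref{thm:ftrans} delivers transitivity of $G$ on the $J$-flags of $\Gamma_{/\mathcal{B}}$. Running this argument for every $2$-element subset $J\subseteq I$ gives the incidence-transitivity of $G$ on $\Gamma_{/\mathcal{B}}$.

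There is essentially no obstacle in this proof: it is a one-line corollary of Theorem~\ref{thm:ftrans} together with the trivial fact that $2$-flags lift. The only minor subtlety worth spelling out is the vacuous case where $\Gamma_{/\mathcal{B}}$ has no $J$-flag of a given type $J$, which should be dispatched explicitly so that Theorem~\ref{thm:ftrans} is only invoked when its standing hypothesis on the existence of $J$-flags is met.
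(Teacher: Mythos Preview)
Your proposal is correct and matches the paper's own proof, which is a one-liner observing that rank two flags in the quotient lift to flags in $\Gamma$ by the very definition of $*_{/\mathcal{B}}$, so Theorem~\ref{thm:ftrans} applies. Your extra care in handling the vacuous case where $\Gamma_{/\mathcal{B}}$ has no $J$-flag is a nice touch not spelled out in the paper.
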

\begin{proof}
Follows from the fact that rank 2 flags in the quotient lift to
flags in the original pregeometry by definition. 
\end{proof}

Theorem~\ref{thm:flagtrans} is also a corollary of Theorem \ref{thm:ftrans}. The following example shows that Theorem \ref{thm:flagtrans} is not true in general for pregeometries.

\begin{example}
Let $\Gamma$ be the pregeometry with incidence graph  given in Figure
\ref{fig:quo3cycle} with type partition $\mathcal{B}=\{A,B,C\}$ and
let $A$ be the cyclic group of order two which induces a 2-cycle on both $A$, $B$ and $C$. Then the flag $\{A,B,C\}$ of $\Gamma_{/A}$ does
not lift while $A$ is flag-transitive on both $\Gamma$ and
$\Gamma_{/A}$. 
\end{example}

\begin{figure}
\begin{center}
\includegraphics[height=5cm]{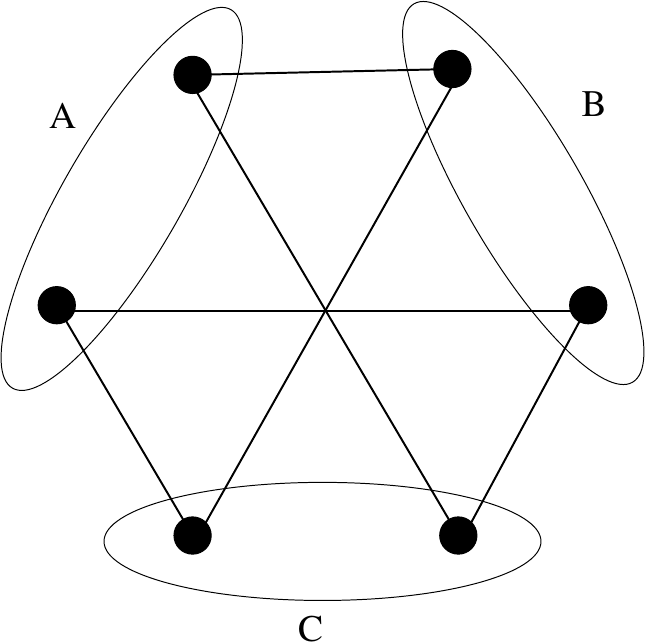}
\caption{A flag-transitive pregeometry with a quotient for which
  \textsc{(FlagsLift)} does not hold.} 
\label{fig:quo3cycle}
\end{center}
\end{figure}

We end this section by noting that for incidence-transitive
geometries, normal quotients have nice uniform combinatorial incidence
properties. In the rank 2 case it has been seen that such uniformity
implies that local symmetry properties such as primitivity and
2-transitivity are preserved \cite[Lemma 5.1]{GLP1}. These results
will then carry over to the arbitrary rank case by considering the
rank 2 truncations of the geometry.

\begin{lemma}
\label{lem:multicover}
Let $\Gamma$ be a pregeometry with normal quotient $\Gamma_{/N}$. Moreover, suppose that  $\Aut(\Gamma)$ is incidence-transitive and vertex-transitive on $\Gamma$. Then there exists an $I\times I$ array $K=(K_{ij})$ of integers such that if $\alpha\in X_i$ and $\beta\in X_j$ with $\alpha*\beta$, then $\alpha$ is incident with $K_{ij}$ elements of $\beta^N$.
\end{lemma}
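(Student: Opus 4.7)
The plan is to define, for each ordered pair $(i,j)\in I\times I$, the integer $K_{ij}$ to be the number of elements of $\beta^N$ incident with $\alpha$, where $\alpha\in X_i$ and $\beta\in X_j$ is any incident pair (setting $K_{ij}=0$ if no such pair exists). The whole content of the lemma is then to check that this count does not depend on the choice of $(\alpha,\beta)$. I would split the verification into the two cases $i\neq j$ and $i=j$.

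For the case $i\neq j$, let $(\alpha,\beta)$ and $(\alpha',\beta')$ be two incident pairs with $\alpha,\alpha'\in X_i$ and $\beta,\beta'\in X_j$. Set $G=\Aut(\Gamma)$. Since $G$ is incidence-transitive, it is transitive on $\{i,j\}$-flags, so there exists $g\in G$ with $\{\alpha,\beta\}^g=\{\alpha',\beta'\}$. Because automorphisms preserve the type partition and $i\neq j$, this forces $\alpha^g=\alpha'$ and $\beta^g=\beta'$. As $N\norml G$, we get $(\beta^N)^g=(\beta^g)^{N}= (\beta')^N$, and hence $g$ restricts to a bijection
\[
\{\gamma\in\beta^N:\gamma*\alpha\}\longrightarrow\{\gamma'\in(\beta')^N:\gamma'*\alpha'\},
\]
so the two counts agree. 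This shows $K_{ij}$ is well defined when $i\neq j$.

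For the case $i=j$, the pregeometry axiom that $\alpha*\beta$ with $t(\alpha)=t(\beta)$ implies $\alpha=\beta$ means an incident pair $(\alpha,\beta)$ with both elements in $X_i$ can only be $(\alpha,\alpha)$. Now any element of $\alpha^N$ has type $i$, so by the same axiom the only element of $\alpha^N$ incident with $\alpha$ is $\alpha$ itself. Thus $K_{ii}=1$, uniformly in $\alpha$. Vertex-transitivity is not strictly required for the argument, but it is the natural companion hypothesis guaranteeing that the integer $K_{ij}$ describes a genuinely homogeneous incidence parameter across all of $X_i$. The only step that could present any subtlety is the observation that the automorphism $g$ obtained from incidence-transitivity necessarily sends $\alpha\mapsto\alpha'$ (rather than $\alpha\mapsto\beta'$), which is immediate from type-preservation together with $i\neq j$; thereafter the normality of $N$ does the rest.
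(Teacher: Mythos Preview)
Your proof is correct and follows essentially the same line as the paper's: fix a reference incident pair, use incidence-transitivity of $\Aut(\Gamma)$ to carry any other incident pair to it, and then invoke normality of $N$ to conclude that the $N$-orbit is carried along, giving a bijection between the two counting sets. You are slightly more careful than the paper in separating out the case $i=j$ and observing that $K_{ii}=1$ directly from the pregeometry axiom; the paper's argument implicitly folds this into the transitivity step (where vertex-transitivity would cover it), but the content is the same.
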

\begin{proof}
Let $\gamma\in X_i,\delta\in X_j$ with $\gamma*\delta$ and let $\lambda$ be the number of elements of $\delta^N$ incident with $\gamma$. Now let $\alpha\in X_i$ and $\beta\in X_j$  such that $\alpha*\beta$. Since $\Aut(\Gamma)$ is incidence-transitive, there exists $g\in \Aut(\Gamma)$ such that $\gamma^g=\alpha$ and $\delta^g=\beta$. Moreover, $g$ maps $\gamma^N$ to $\alpha^N$ and $\delta^N$ to $\beta^N$. Hence $\alpha$ is adjacent to $\lambda$ elements of $\beta^N$.
\end{proof}

\subsection{Coset pregeometries}
\label{sec:coset}

We outline the theory of coset geometries, see for example \cite{hbk}.

Let $G$ be a group with subgroups $\{G_i\}_{i\in I}$. The \emph{coset
  pregeometry} $\Gamma=\Gamma(G,\{G_i\}_{i\in I})$ is the pregeometry
whose elements of type $i\in I$ are the right cosets of $G_i$ in $G$
and two cosets $G_ix$ and $G_jy$ are incident if and only if $G_ix\cap
G_jy$ is nonempty.  The rank 2 truncation with elements of 
type $i$ or $j$ is connected if and only if $\la G_i,G_j\ra=G$. 
The group $G$ acts by right multiplication on the elements of $\Gamma$
inducing a group of automorphisms.

For each $J\subseteq I$, the set $\{G_i\}_{i\in J}$ is a flag in
$\Gamma$ as each subgroup contains the identity element. Indeed, $G$
is flag-transitive on $\Gamma$ if and only if for each flag
$\{G_ix_i\}_{i\in J}$, for some subset $J$ of $I$, there is some
element $g$ lying in each $G_ix_i$. Note that $g$ then maps the flag
$\{G_i\}_{i\in J}$ to $\{G_ix_i\}_{i\in J}=\{G_ig\}_{i\in
  J}$. Moreover, since $\{G_i\}_{i\in J}$ is contained in the chamber
$\{G_i\}_{i\in I}$, if $\Gamma$ is flag-transitive then $\Gamma$ is a
geometry. If $I=\{1,2,3\}$, the flag-transitivity condition is equivalent to
$(G_1G_2)\cap (G_1G_3)=G_1(G_2\cap G_3)$ and also equivalent to
$(G_1\cap G_2)(G_1\cap G_3)=G_1\cap (G_2G_3)$ \cite[p79]{hbk}. 

We have the following characterisation of coset pregeometries which implies the well-known fact that  every flag-transitive geometry is isomorphic to a coset geometry. It also allows us to deduce that the quotient of a coset pregeometry is a coset pregeometry.

\begin{theorem}
\label{thm:cosetequiv}
Let $\Gamma$ be a pregeometry. Then there exists $G\leqslant\Aut(\Gamma)$ with subgroups $\{G_i\}_{i\in I}$ such that $\Gamma\cong\Gamma(G,\{G_i\}_{i\in I})$ if and only if $\Gamma$ contains a chamber and $\Aut(\Gamma)$ is incidence-transitive and vertex-transitive on $\Gamma$.
\end{theorem}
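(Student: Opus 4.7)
The plan is to prove the two directions separately, with the forward direction being straightforward from the definition of the coset pregeometry and the backward direction requiring the construction of an explicit isomorphism from a chosen chamber.

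For the forward direction, assume $\Gamma \cong \Gamma(G,\{G_i\}_{i\in I})$. The set $\{G_i\}_{i\in I}$ is a chamber since each $G_i$ contains the identity, so pairwise intersections are nonempty. The group $G$ acts by right multiplication on cosets, preserving types, hence $G \leqslant \Aut(\Gamma)$; vertex-transitivity is immediate since $G$ permutes the cosets of $G_i$ transitively for each $i$. For incidence-transitivity, given an incident pair $(G_ix, G_jy)$ with $i\ne j$, pick $g \in G_ix \cap G_jy$; then $G_ix = G_ig$ and $G_jy = G_jg$, so right multiplication by $g$ takes the flag $\{G_i,G_j\}$ to $\{G_ix,G_jy\}$, proving transitivity on $\{i,j\}$-flags.

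For the backward direction, set $G = \Aut(\Gamma)$, fix a chamber $C = \{\alpha_i\}_{i\in I}$, and set $G_i = G_{\alpha_i}$. By vertex-transitivity, each $\beta \in X_i$ equals $\alpha_i^g$ for some $g\in G$, and I would define $\phi\colon \Gamma \to \Gamma(G,\{G_i\}_{i\in I})$ by $\phi(\beta) = G_ig$. Well-definedness and type-wise injectivity follow from $\alpha_i^g = \alpha_i^{g'} \Leftrightarrow g'g^{-1} \in G_i \Leftrightarrow G_ig = G_ig'$, and surjectivity within each type is clear. It remains to verify that $\phi$ transports incidence faithfully.

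For incidence preservation, suppose $\beta = \alpha_i^g$ and $\gamma = \alpha_j^h$ with $i\ne j$. If $\beta * \gamma$ in $\Gamma$, incidence-transitivity gives some $k\in G$ with $\alpha_i^k = \beta$ and $\alpha_j^k = \gamma$; then $k \in G_ig \cap G_jh$, so $\phi(\beta) *_{\Gamma(G,\cdot)} \phi(\gamma)$. Conversely, if $G_ig \cap G_jh$ contains some $k$, then $\alpha_i^k = \beta$ and $\alpha_j^k = \gamma$, and since $\alpha_i * \alpha_j$ (both lie in the chamber $C$) and $k$ is an automorphism, $\beta * \gamma$ in $\Gamma$. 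The equal-type case is handled by the pregeometry axiom. The only step requiring care is ensuring the backward direction of incidence preservation, and the trick is precisely to exploit that the reference chamber has all pairs incident, so any automorphism translating $\alpha_i,\alpha_j$ onto $\beta,\gamma$ automatically witnesses incidence; once this is noted, there is no real obstacle.
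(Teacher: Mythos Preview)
Your proof is correct and follows essentially the same approach as the paper: both directions are handled identically, with the forward direction reading off the chamber $\{G_i\}_{i\in I}$ and using an element of $G_ix\cap G_jy$ to witness incidence-transitivity, and the backward direction fixing a chamber $\{\alpha_i\}_{i\in I}$, setting $G_i=G_{\alpha_i}$, and verifying that $\alpha_i^g\mapsto G_ig$ is an incidence-preserving bijection via incidence-transitivity in one direction and the automorphism action on the reference chamber in the other.
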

\begin{proof}
Suppose first that $\Gamma\cong \Gamma(G,\{G_i\}_{i\in I})$ for some $G\leqslant\Aut(\Gamma)$. Then for
each $i$, $G$ is transitive on the set of cosets of $G_i$ and hence is
transitive on the set of elements of type $i$. Moreover, if
$\{G_ix,G_jy\}$ is a flag of rank 2 then there exists $g\in G_ix\cap
G_jy$ and so $\{G_i,G_j\}^g=\{G_ix,G_jy\}$. Thus $G$ is incidence-transitive. Moreover,  $\{G_i\}_{i\in I}$ is a chamber as the identity
is contained in each $G_i$. 

Conversely, suppose that for each type $i$, $G\leqslant\Aut(\Gamma)$ is transitive on the set of elements of
type $i$, $G$ is incidence-transitive on $\Gamma$ and $\Gamma$ contains a chamber
$\{\alpha_i\}_{i\in I}$.  For each $i\in I$ let $G_i=G_{\alpha_i}$ and
if $\alpha$ is of type $i$ associate $\alpha$ with the coset $G_ix$,
where $x$ maps $\alpha_i$ to $\alpha$. Suppose $\beta*\alpha$ with
$\beta$ of type $j$, and suppose that $\beta$ is associated with
$G_jy$. Since $G$ is incidence-transitive there exists $g\in G$ such
that $(\alpha_i,\alpha_j)^g=(\alpha,\beta)$. Thus $g\in G_ix\cap
G_jy$ and so $G_ix$ is adjacent to $G_jy$ in $\Gamma(G,\{G_i\}_{i\in
  I})$.  Conversely, given two cosets $G_ix$, $G_jy$ with common
element $g$, then $G_ix=G_ig$ is associated with $\alpha_i^g$ and $G_jy=G_jg$ is
associated with $\alpha_j^g$ and since $\alpha_i$ is incident with
$\alpha_j$ it follows that $\alpha_i^g$ is incident with
$\alpha_j^g$. Thus $\Gamma\cong \Gamma(G,\{G_i\}_{i\in I})$. 
\end{proof}

\noindent \emph{Proof of Theorem \ref{thm:cosetquot}:}
This follows from Theorem \ref{thm:cosetequiv} as the projection $\pi_{/\mathcal{B}}$ maps chambers to chambers, and transitivity of $G$ on the set of elements of each type and incidence-transitivity is preserved. \hfill \qed \medskip

We now give the following example of a rank 4 flag-transitive coset geometry which has a normal quotient which is not a geometry.

\begin{example}
\label{eg:coseteg}
Let $A$ be an abelian group and $G=A^3$. We define four subgroups 
\begin{align*}
G_1 &=\{(x,1,x)\mid x\in A\}\\
G_2&=\{(x,1,1)\mid x\in A\}\\
G_3&=\{(x,x,1)\mid x\in A\}\\
G_4&=1
\end{align*}
and let $\Gamma=\Gamma(G,\{G_1,G_2,G_3,G_4\})$. The group $G$ acts on
the pregeometry preserving incidence and since $G_1\cap G_2\cap
G_3\cap G_4=\{1\}$, the action on flags of rank 4 is faithful.  
A sufficient condition for $G$ to be flag-transitive on $\Gamma$ is that the following five
conditions to hold (see \cite{deho94} or \cite[p79]{hbk}): 
$$(G_1G_2)\cap (G_1G_3)=G_1(G_2\cap G_3)$$
$$(G_1G_2)\cap (G_1G_4)=G_1(G_2\cap G_4)$$
$$(G_1G_3)\cap (G_1G_4)=G_1(G_3\cap G_4)$$
$$(G_2G_3)\cap (G_2G_4)=G_2(G_3\cap G_4)$$
$$(G_1G_2)\cap (G_1G_3)\cap (G_1G_4)=G_1(G_2\cap G_3\cap G_4).$$
Now 
$G_1G_2=\{(x,1,y)\mid x,y\in A\}$ and $G_1G_3=\{(xy,x,y)\mid x,y\in
A\}$. Thus $(G_1G_2)\cap (G_1G_3)=G_1$.  Also $G_2\cap G_3=1$ and so
$G_1(G_2\cap G_3)=G_1=(G_1G_2)\cap (G_2G_3)$. Since $G_4=1$ the next
three also hold. Finally, as $(G_1G_2)\cap (G_1G_3)=G_1$, $G_4=1$ and
$G_2\cap G_3=1$ the last condition holds. Thus $G$ is flag-transitive on $\Gamma$. Also note that $\la G_i,G_j\ra\neq G$
for any $i,j$ and so none of the rank 2 truncations is connected.  

Suppose now that $N=\{(x,x,x)\mid x\in A\}\norml G$. Then, for each
$i\in\{1,2,3,4\}$, $N$ acts intransitively on the elements of type
$i$. We examine the quotient pregeometry
$\Gamma_{/N}=\Gamma(G,\{NG_1,NG_2,NG_3,NG_4\})$.  
Let $a,b\in A$ with $a\neq b$ and let
$F=\{NG_1,NG_2,NG_3(a,b,b)\}$. Then $NG_1\cap NG_2=N$ and $(a,b,b)\in
(NG_2)\cap (NG_3(a,b,b))$. Also
$(ab^{-1},1,ab^{-1})=(ab^{-1},bb^{-1},bb^{-1}ab^{-1})\in G_1 \cap
(NG_3(a,b,b))$. Thus $F$ is a flag of $\Gamma_{/N}$.  For $\Gamma_{/N}$ to be a geometry, there
must be some coset of $N$ in $G$ which intersects nontrivially with
each of the elements of $F$. Since $(NG_1)\cap (NG_2)=N$ the only
possible candidate is $N$. However, $N\cap (NG_3(a,b,b))=\varnothing$
and so $F$ is not contained in a chamber. Thus $\Gamma_{/N}$ is not a
geometry. 
\end{example}
\begin{example}
\label{eg:cosetegb}
Let $\Sigma=\Gamma(G,\{G_1,G_2,G_3\})$ be the rank 3 truncation
of the geometry $\Gamma$ of the previous example. By Lemma
\ref{lem:rank3}, $\Sigma_{/N}$ is a geometry. Now 
$F=\{NG_1,NG_2,NG_3(a,b,b)\}$ is a flag of $\Sigma_{/N}$. Since $N\cap
NG_3(a,b,b)=\varnothing$, there is no element in common to all three
cosets in $F$. Thus $F$ does not lift to a flag of $\Sigma$. Moreover, Theorem \ref{thm:flagtrans} implies that $G$ is not flag-transitive on $\Sigma_{/N}$.
\end{example}

\begin{remark}
\label{rem:coseteg}
Note that Examples \ref{eg:coseteg} and \ref{eg:cosetegb} show that, if $G$ is flag-transitive on a  geometry $\Gamma$ and if $N\norml G$, then
\begin{enumerate}
\item[(a)] $\Gamma_{/N}$ need not be a geometry.
\item[(b)] Even if $\Gamma_{/N}$ is a geometry \textsc{(FlagsLift)} need not hold.
\end{enumerate}
\end{remark}

Both geometries in Examples \ref{eg:coseteg} and \ref{eg:cosetegb}
have disconnected rank 2 truncations. This suggests the following
question. 

\begin{question}
Does there exist a flag-transitive geometry with connected rank 2
truncations having a normal quotient which is not a geometry? 
\end{question}

\section{Diagrams}
\label{sec:diagram}

A rank 2 geometry whose incidence graph is complete bipartite, is
called a \emph{generalised digon}.

Let $\Gamma$ be a geometry with type set $I$. We define the
\emph{diagram} of $\Gamma$ to be the graph $(I,\sim)$ whose vertex
set is $I$ such that distinct types $i, j$ are adjacent if and only if there
exists a flag of cotype $\{i,j\}$ in $\Gamma$ whose residue is not a
generalised digon. This diagram ``without decorations'' is usually called the
\emph{basic diagram} (see~\cite{hbk,Pasi94}).

\remark\label{rem1}(a) The above definition does not exclude \emph{some} of
the residues of type $\{i,j\}$ from being generalised digons for an edge
$i\sim j$ in the diagram. Pasini~\cite{Pasi82} calls a geometry
\emph{pure} when $i\sim j$ implies that \emph{no} residue of type
$\{i,j\}$ is a generalised digon. For flag-transitive geometries all
residues of a given type are isomorphic and so there is no need to
make this distinction.

(b) If $i$ and $j$ are not adjacent in the diagram and $\alpha$ is an element of
type outside $\{i,j\}$, then $i$ and $j$ are also nonadjacent in the diagram of
the residue of $\alpha$. 

(c) In a pure geometry (b) is also true for adjacent types, that is, if $i$ and $j$ are adjacent and $\alpha$ is an element of
type outside $\{i,j\}$, then $i$ and $j$ are adjacent in the diagram of the residue of $\alpha$.


The following result is known as the Direct Sum Theorem (see \cite[p.81]{hbk} or \cite{Tits56} for an early version).
\begin{theorem}
\label{thm:directsum}
Let $\Gamma$ be a residually connected geometry of finite rank with
type set $I$ and let $i$ and $j$ be types in distinct connected
components of the diagram of $\Gamma$. Then every element of type $i$
in $\Gamma$ is incident with every element of type $j$.
\end{theorem}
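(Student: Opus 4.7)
My plan is to induct on the rank $n=|I|$. The base case $n=2$ is immediate: the diagram having $i$ and $j$ in distinct components means $i$ and $j$ are nonadjacent, so by definition of the diagram the residue of the empty flag---which is $\Gamma$ itself---is a generalized digon, and every element of type $i$ is therefore incident with every element of type $j$.

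For the inductive step with $n\geq 3$, let $I_1$ and $I_2$ denote the components of the diagram containing $i$ and $j$, so $I_1\cap I_2=\varnothing$. At least one of $I_1,I_2$ has size at least $2$ because $n\geq 3$; after swapping the roles of $i$ and $j$ if necessary, assume $|I_1|\geq 2$ and fix an auxiliary type $k\in I_1\setminus\{i\}$. Fix any $\beta\in X_j$; the goal is to show $\alpha\ast\beta$ for every $\alpha\in X_i$. A chamber through $\beta$ supplies some $\alpha_0\in X_i$ incident with $\beta$, and the $\{i,k\}$-path theorem of Buekenhout and Cohen recalled just above yields a path $\alpha_0=u_0\ast u_1\ast\cdots\ast u_\ell=\alpha$ whose interior vertices all have type $i$ or $k$. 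Deleting consecutive coincident vertices---forced by the pregeometry axiom that two incident elements of the same type must coincide---makes the types alternate, so $\ell$ is even with $t(u_{2r})=i$ and $t(u_{2r+1})=k$.

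Next, show $u_{2r}\ast\beta$ for every $r$ by a secondary induction, starting from $u_0=\alpha_0$. Assuming $u_{2r}\ast\beta$, consider the residue $\Gamma_{u_{2r}}$---a residually connected geometry of rank $n-1$ on type set $I\setminus\{i\}$---in whose diagram $k$ and $j$ still lie in distinct components, because no diagram edge of $\Gamma$ bridges $I_1$ to $I_2$. Since both $u_{2r+1}$ (type $k$) and $\beta$ (type $j$) lie in $\Gamma_{u_{2r}}$, the outer inductive hypothesis forces $u_{2r+1}\ast\beta$. Applying the same reasoning in $\Gamma_{u_{2r+1}}$---a rank $n-1$ residually connected geometry on $I\setminus\{k\}$ where $i$ and $j$ are in distinct components---with $u_{2r+2}$ (type $i$) and $\beta$ (type $j$) both inside, gives $u_{2r+2}\ast\beta$. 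This closes the secondary induction and produces $\alpha=u_\ell\ast\beta$.

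The main obstacle is choosing the auxiliary type $k$ correctly. Picking $k\in I_2\setminus\{j\}$ would be fatal: in the residue $\Gamma_{u_{2r}}$ the types $k$ and $j$ would both remain within $I_2$, possibly in the same component of the residue's diagram, and the outer inductive hypothesis would no longer apply to produce $u_{2r+1}\ast\beta$. Choosing $k$ in the component $I_1$ of $i$ is the correct move, because removing type $i$ from $I_1$ cannot attach any fragment of $I_1\setminus\{i\}$ to $I_2$. A subsidiary fact that I would verify explicitly is that the diagram of any residue $\Gamma_F$ is a subgraph of the diagram of $\Gamma$ restricted to $I\setminus t(F)$: every edge in the residue's diagram records a non-generalized-digon sub-residue which is equally a residue of $\Gamma$, so disconnection of diagram components is preserved under taking residues.
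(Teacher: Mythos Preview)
The paper does not give its own proof of this theorem; it is quoted from \cite{hbk} and used as a tool. So there is nothing in the paper to compare your argument against, and the relevant question is simply whether your proof stands on its own.

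Your inductive scheme is the standard one and is essentially correct, but there is one genuine slip. You write ``At least one of $I_1,I_2$ has size at least $2$ because $n\geq 3$''. That is false: both $i$ and $j$ can be isolated vertices of the diagram while a third type lives in yet another component (for instance when the diagram has no edges at all). What your argument actually requires of the auxiliary type $k$ is only that $k\neq i$ and $k\notin I_2$, so that in the diagram of $\Gamma$ restricted to $I\setminus\{i\}$ the types $k$ and $j$ lie in distinct components; condition~(b) for the residue of $u_{2r+1}$ is automatic, since deleting a vertex cannot merge components. Such a $k$ exists unless $I=I_2\cup\{i\}$, in which case $|I_2|=n-1\geq 2$ and you may legitimately swap the roles of $i$ and $j$ and take $k\in I_2\setminus\{j\}$. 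With this correction the proof goes through. Everything else---the use of the $\{i,k\}$-path theorem, the alternation of types after deleting repeated vertices, the observation that the diagram of a residue is a subgraph of the restricted diagram of $\Gamma$---is fine.
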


Theorem \ref{thm:directsum} shows that a residually connected geometry
whose diagram is disconnected can easily be reconstructed from the
truncations corresponding to the connected components of its diagram.

\begin{lemma}
\label{lem:treech1}
In a residually connected geometry $\Gamma$ of finite rank whose
diagram $(I,\sim)$ 
contains no cycles, we have, for each path $i\sim j\sim k$ and
each choice $\alpha_i*\alpha_j*\alpha_k$ of elements of respective
types $i$, $j$, $k$, the incidence $\alpha_i*\alpha_k$. 
\end{lemma}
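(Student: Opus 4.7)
The plan is to reduce to the Direct Sum Theorem (Theorem \ref{thm:directsum}) applied to the residue $\Gamma_{\alpha_j}$. The key geometric input is that the diagram of this residue, when restricted to the type set $I\setminus\{j\}$, places $i$ and $k$ in different connected components, at which point the Direct Sum Theorem immediately forces $\alpha_i*\alpha_k$.

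First I would note that since $\Gamma$ is residually connected of finite rank, so is $\Gamma_{\alpha_j}$, and its type set is $I\setminus\{j\}$. Moreover, $\alpha_i$ and $\alpha_k$, being incident to $\alpha_j$ and having types $i,k\neq j$, both lie in $\Gamma_{\alpha_j}$. So it suffices to show that $i$ and $k$ are in different connected components of the diagram of $\Gamma_{\alpha_j}$.

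Next, I would argue that the diagram of $\Gamma_{\alpha_j}$ is a spanning subgraph of the diagram of $\Gamma$ restricted to $I\setminus\{j\}$. Indeed, by Remark \ref{rem1}(b), if two types are nonadjacent in the diagram of $\Gamma$, then they remain nonadjacent in the diagram of any residue of an element of a third type; taking this third type to be $j$ shows that every edge appearing in the diagram of $\Gamma_{\alpha_j}$ is already an edge of $(I,\sim)$ on vertex set $I\setminus\{j\}$. Now since $(I,\sim)$ is a tree containing the path $i\sim j\sim k$, deleting the vertex $j$ disconnects $i$ from $k$ (the unique path between them in the tree passes through $j$). Hence $i$ and $k$ lie in distinct components of the diagram of $\Gamma_{\alpha_j}$.

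Finally, applying Theorem \ref{thm:directsum} to the residually connected geometry $\Gamma_{\alpha_j}$ yields that every element of type $i$ in $\Gamma_{\alpha_j}$ is incident with every element of type $k$ in $\Gamma_{\alpha_j}$; in particular $\alpha_i*\alpha_k$. The only substantive step is the diagram-restriction argument of the previous paragraph, whose legitimacy rests squarely on Remark \ref{rem1}(b); no obstacle beyond invoking this remark correctly is expected.
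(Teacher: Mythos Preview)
Your proof is correct and follows essentially the same approach as the paper: pass to the residue $\Gamma_{\alpha_j}$, observe that $i$ and $k$ lie in different components of its diagram, and invoke the Direct Sum Theorem. You are simply more explicit than the paper about why the diagram of $\Gamma_{\alpha_j}$ has no edge-path from $i$ to $k$, spelling out via Remark~\ref{rem1}(b) that this diagram is a subgraph of $(I\setminus\{j\},\sim)$; the paper's proof leaves this implicit. One minor wording slip: the hypothesis is that $(I,\sim)$ has no cycles, so it is a forest rather than necessarily a tree, but since $i\sim j\sim k$ places all three in the same tree component your argument is unaffected.
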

\begin{proof}
Note that $\alpha_i$ and $\alpha_k$ are in the residue of $\alpha_j$
and that $i$ and $k$ belong to different connected components of
$I\setminus\{j\}$. The Direct Sum Theorem then ensures the incidence
of $\alpha_i$ and $\alpha_k$. 
\end{proof}
We will call a geometry satisfying the conclusion of Lemma \ref{lem:treech1} \emph{$*$-transitive on paths}.

The converse of Lemma \ref{lem:treech1} is not necessarily true.  The Hoffman-Singleton graph $\HoSi$ is a regular graph of valency $7$, with $50$ vertices, and full automorphism group $\PSU(3,5):2$. 
Its maximal cocliques of size $15$ fall into two orbits under $\PSU(3,5)$.
Take $X_1$ and $X_{1}'$ to be two copies of the vertex set of $\HoSi$, and take $X_2$ and $X_{2}'$ to be two copies of an orbit of 15-cocliques under $\PSU(3,5)$.
For $a$, $a'$ in $X_1$, $X_{1}'$ respectively, and for  $b$, $b'$ in $X_2$, $X_{2}'$ respectively, we define incidence as follows:
\begin{align*}
a*a' &\iff \{a,a'\} \text{ is an edge of } \HoSi.\\
a*b &\iff a \in b\\
a*b' &\iff a \notin b'\\
a'*b &\iff a' \notin b\\
a'*b' &\iff a' \in b'\\
b*b' &\iff b \cap b'=\emptyset
\end{align*}
This rank 4 geometry is described in \cite{Neum} where it is seen that its diagram is the $4$-cycle $(X_1,X_1',X_2',X_2)$. The group $\PSU(3,5)$ is a flag-transitive group of automorphisms. It is very easily checked that for each path $i\sim j\sim k$ in the diagram (there are four of them) and
each choice $\alpha_i*\alpha_j*\alpha_k$ of elements of respective types $i$, $j$, $k$ we obtain $\alpha_i*\alpha_k$.


The diagram of a geometry which is $*$-transitive on paths is somewhat restricted in the sense that it
cannot have a triangle in its diagram.

\begin{theorem}
Let $\Gamma$ be a geometry and $i\sim j\sim k$ a path in the diagram
such that, for any path $\alpha_i*\alpha_j*\alpha_k$ in the
incidence graph, we have $\alpha_i*\alpha_k$. Then  $i\not\sim k$. 
\end{theorem}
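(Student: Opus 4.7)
The plan is to proceed by contradiction: assume $i\sim k$ in the diagram, and use the defining witness of this edge to construct a configuration $\alpha_i * \alpha_j * \alpha_k$ for which $\alpha_i \not * \alpha_k$, contradicting the hypothesis applied to the path $i\sim j\sim k$.

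First I would unpack $i\sim k$ using the definition of the diagram: there exists a flag $F$ of cotype $\{i,k\}$ whose residue $\Gamma_F$ is not a generalised digon. Since $i\sim j$ and $j\sim k$ force $i$, $j$, $k$ to be pairwise distinct, $j$ lies in $t(F)=I\setminus\{i,k\}$, so $F$ contains a (unique) element $\alpha_j$ of type $j$. Next, since $\Gamma_F$ is a rank-2 pregeometry over types $\{i,k\}$ whose incidence graph is not complete bipartite, there exist $\alpha_i\in X_i\cap\Gamma_F$ and $\alpha_k\in X_k\cap\Gamma_F$ with $\alpha_i\not*\alpha_k$. Because $\alpha_i$ and $\alpha_k$ lie in the residue of $F$, each is incident with every element of $F$, and in particular both are incident with $\alpha_j$. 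This produces the desired chain $\alpha_i*\alpha_j*\alpha_k$ of elements of types $i,j,k$ along the path $i\sim j\sim k$.

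Finally, invoking the $*$-transitive-on-paths hypothesis for the path $i\sim j\sim k$ gives $\alpha_i*\alpha_k$, contradicting the choice of $\alpha_i,\alpha_k$. Hence the assumption $i\sim k$ is untenable and the conclusion $i\not\sim k$ follows.

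There is no substantial obstacle: the argument is a direct application of the two definitions in play (diagram edge and generalised digon) together with the hypothesis. The only points that require care are verifying that $i$, $j$, $k$ are genuinely distinct (so that $j\in t(F)$ and the flag really does contain the needed middle element) and noting that the witness residue of $\Gamma_F$ is guaranteed to supply a nonincident pair rather than merely some abstract deficiency. The purity assumption, interestingly, is not strictly used in this proof — existence of a single non-digon residue of cotype $\{i,k\}$ suffices — although in the pure setting the same argument in fact shows that \emph{every} residue of cotype $\{i,k\}$ would furnish such a contradiction.
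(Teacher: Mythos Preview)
Your proof is correct and is essentially the contrapositive of the paper's argument: the paper takes an arbitrary flag $F$ of cotype $\{i,k\}$, uses the element $\alpha_j\in F$ of type $j$ to deduce that any $\alpha_i,\alpha_k$ in $\Gamma_F$ satisfy $\alpha_i*\alpha_j*\alpha_k$ and hence $\alpha_i*\alpha_k$, concluding that every such residue is a generalised digon. Your observation that purity is not actually invoked is accurate; the paper's proof likewise makes no use of it.
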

\begin{proof}
Consider a flag $F$ of cotype $\{i,k\}$ with $i\sim j\sim k$ in the
diagram. Take any two elements
$\alpha_i,\alpha_k$ of respective types $i$ and $k$ in the residue of
$F$. Then we have $\alpha_i*\alpha_j*\alpha_k$, where $\alpha_j\in F$
with $t(\alpha_j)=j$, so that $\alpha_i*\alpha_k$. Hence the residue
of $F$ is a generalised digon. 
\end{proof} 

%

The quotient $\Gamma_{/\mathcal{B}}$ of a geometry $\Gamma$ does not
necessarily have the same diagram as $\Gamma$. However, if $\Gamma$ is
a 2-cover of $\Gamma_{/\mathcal{B}}$, then $\Gamma$ and
$\Gamma_{/\mathcal{B}}$ have the same diagram since their rank 2
residues are all isomorphic. 
\begin{example}
A cycle $(0,1,\ldots,7)$ of length~8 is a bipartite graph and hence is
the incidence graph of a rank 2
geometry. This geometry is firm and residually connected and obviously
has a connected diagram. The group $A\cong C_2$ generated by the map
$x\mapsto x+4$ acts on this geometry and has two orbits on the set of elements of each type. The quotient geometry with respect to these orbits has
incidence graph $K_{2,2}$ and hence a disconnected diagram.  
\end{example}

The next lemma discusses certain incidences between elements whose types lie in a tree structure $(J,E)$ imposed on a subset $J$ of types. Note that this tree is not necessarily related to the restriction of the basic diagram to $J$, and the lemma makes no assertions about incidence between $\alpha_i$ and $\alpha_j$ if $\{i,j\}\notin E$.    

\begin{lemma}
\label{lem:placingtree}
Let $\Gamma$ be a geometry of finite rank and let $A\leqslant \Aut(\Gamma)$.
Let $F_A=\{B_j\}_{j\in J}$ be a flag in $\Gamma_{/A}$ where
$t_{/A}(B_j)=j$ for all $j\in J$.  Let $E\subseteq J^{\{2\}}$
such that the graph $(J,E)$ is a tree, let $k\in J$ and let $\alpha_k\in B_k$. Then there exist elements $\{\alpha_j\}_{j\in J\setminus\{k\}}$ in $\Gamma$, with $\alpha_j\in B_j$ for all $j\in
J$, such that $\{i,j\}\in E$ implies $\alpha_i*\alpha_j$. 
\end{lemma}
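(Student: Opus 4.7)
The plan is to proceed by induction on $|J|$. The base case $|J|=1$ is trivial, since then $J\setminus\{k\}=\emptyset$ and there are no edges to realise. For the inductive step, I exploit the fact that any tree on at least two vertices has at least two leaves, so I can pick a leaf $\ell$ of $(J,E)$ with $\ell\neq k$. Let $m$ be the unique neighbor of $\ell$ in $(J,E)$.

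Removing $\ell$ gives a pair $(J',E')$ with $J'=J\setminus\{\ell\}$ and $E'=E\setminus\{\{\ell,m\}\}$, which is still a tree, and $\{B_j\}_{j\in J'}$ is still a flag of $\Gamma_{/A}$ (every subset of a flag is a flag). Applying the induction hypothesis to this smaller tree with the same base point $k$ and element $\alpha_k$ produces elements $\{\alpha_j\}_{j\in J'\setminus\{k\}}$ with $\alpha_j\in B_j$ and $\alpha_i*\alpha_j$ whenever $\{i,j\}\in E'$. In particular $\alpha_m\in B_m$ has already been chosen, and all tree-edges not involving $\ell$ are realised.

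The remaining task is to find $\alpha_\ell\in B_\ell$ with $\alpha_\ell*\alpha_m$. Since $\{B_\ell,B_m\}$ is a rank-two subflag of $F_A$, the definition of $*_{/A}$ gives $\beta\in B_\ell$ and $\gamma\in B_m$ with $\beta*\gamma$. Because $\Gamma_{/A}$ is an \emph{orbit}-quotient, $B_m=\gamma^A$, so there exists $a\in A$ with $\gamma^a=\alpha_m$; setting $\alpha_\ell:=\beta^a$ then gives $\alpha_\ell\in B_\ell^a=B_\ell$ (since $B_\ell$ is an $A$-orbit, hence $A$-invariant) together with $\alpha_\ell=\beta^a * \gamma^a=\alpha_m$, which closes the induction.

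There is no real obstacle here: the key observation is that in an orbit-quotient we may transport any witnessing incident pair $(\beta,\gamma)$ by some $a\in A$ so that the $B_m$-component lands exactly on the previously chosen $\alpha_m$. This is precisely the flexibility the orbit-quotient affords that lets leaves be attached one at a time without disturbing the already-constructed elements $\{\alpha_j\}_{j\in J'\setminus\{k\}}$. Note also that the argument uses only the rank-two subflag $\{B_\ell,B_m\}$ of $F_A$, so the tree structure is doing all the combinatorial work of organising the induction.
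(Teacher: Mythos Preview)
Your proof is correct and follows essentially the same approach as the paper: both arguments build the desired family $\{\alpha_j\}$ one tree-edge at a time, using the key observation that in an orbit-quotient any witnessing incident pair can be transported by an element of $A$ so that one endpoint lands on a previously chosen element. The only cosmetic difference is that the paper organises this as a breadth-first expansion outward from $k$, whereas you phrase it as an induction by removing a leaf $\ell\neq k$; these are dual ways of traversing a finite tree and involve the same mathematical content.
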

\begin{proof}
If $F_A$ has rank 1 then the lemma is trivially true. Thus we
suppose that $F_A$ has rank at least 2. We start with any edge
$\{k,j\}$ in the tree $(J,E)$. Since $F_A$ is a flag there exist
$\beta_k\in B_k$ and $\beta_j\in B_j$ such that
$\beta_k*\beta_j$. There is an element $a\in A$ such that
$\beta_k^a=\alpha_k$ and we let $\alpha_j:=\beta_j^a\in B_j$ so that
$\alpha_j*\alpha_k$. Now let $J'=\{k,j\}$ and let $K'\subseteq J$ be
the set of types adjacent to a type in $J'$ in the tree $(J,E)$. Then
for each $i\in K'$ there exists a unique $\ell\in J'$ such that
$\{\ell,i\}\in E$. Moreover, as $F_A$ is a flag, arguing as before, there
exists $\alpha_i\in B_i$ incident with $\alpha_\ell$. We can then add all
types in $K'$ to $J'$ and repeat the process until $J'=J$.  
\end{proof}

We can now use the diagram to obtain sufficient conditions for some flags to lift.
\begin{corollary}
\label{cor:subtree}
Let $\Gamma$ be a residually connected geometry of finite rank whose diagram contains no cycles and let $A\leqslant \Aut(\Gamma)$.
Let $F_A=\{B_j\}_{j\in J}$ be a flag in $\Gamma_{/A}$ where $t_{/A}(B_j)=j$ for all $j\in J$. 
Suppose that the restriction to $J$ of the basic diagram of $\Gamma$ is a tree.
Then for each $k\in J$ and each $\alpha_k\in B_k$, there
exist elements $\{\alpha_j\}_{j\in J\setminus\{k\}}$ in $\Gamma$, with
$\alpha_j\in B_j$  for all $j\in J$, such that $\{\alpha_j\}_{j\in J}$
is a flag.
\end{corollary}
\begin{proof}
Let $E$ be the set of edges on $J$ induced by the diagram of $\Gamma$. By hypothesis, $(J,E)$ is a tree.
 By Lemma \ref{lem:placingtree}, we can find 
$\{\alpha_i\}_{i\in J}$ in $\Gamma$ with $\alpha_i\in B_i$ such that
if $\{i,j\}\in E$ then $\alpha_i*\alpha_j$. Now
consider $\ell,k\in J$. If  
$\{\ell,k\}\in E$  then
$\alpha_\ell*\alpha_k$. Otherwise, there exists a path $\ell=\ell_0,\ell_1,\ldots, \ell_m=k$ of length at least
$2$ in $(J,E)$, and so $\alpha_{\ell_i}*\alpha_{\ell_{i+1}}$ for $i=0,\ldots, m-1$. Thus we have $\alpha_\ell*\alpha_{\ell_1}*\alpha_{\ell_2}$, and so by Lemma
\ref{lem:treech1},
$\alpha_\ell*\alpha_{\ell_2}$. Working down the path between $\ell$ and $k$, we
can deduce that  
$\alpha_\ell*\alpha_k$. Hence $F=\{\alpha_i\}_{i\in J}$ is a flag.
\end{proof}

We give an example to show that Corollary \ref{cor:subtree} cannot be extended to arbitrary flags. In particular there may be a problem if the restriction of the diagram to $J$ is a forest and not a tree. However chambers do lift, and we prove this in Theorem \ref{thm:chamberlift}.

\begin{example}\label{D4example}{\rm
Let $Q$ be a hyperbolic quadric of equation $x_0x_1+x_2x_3+x_4x_5+x_6x_7=0$ in $PG(7,K)$, where $K$ is a field. Then the maximal totally isotropic subspaces are of (projective) dimension $3$, we call such subspaces {\it solids}. It is well-known that the solids can be partitioned into two sets $M^+$ and $M^-$ such that any two solids in the same set intersect in a subspace of even codimension (that is, a line or empty) and  two solids in different sets intersect in a subspace of odd codimension (that is, a plane or a point). Moreover, each totally isotropic plane is contained in exactly one solid of each set.
We now describe a rank 4 geometry $\Gamma$, whose elements are defined in the following way: 
\begin{center}
\begin{tabular}{|rc|rc|}
 \hline 
Type 1 :&  totally isotropic points& Type 3 :& solids in $M^+$\\
\hline
Type 2 :& totally isotropic lines& Type 4 :& solids in $M^-$\\
\hline
\end{tabular}
\end{center}
 Incidence is symmetrised inclusion, except that elements of type $3$ and $4$ are incident if they intersect in a plane.
It is well-known that  $\Gamma$ is a residually connected geometry (see \cite[p.35--36]{Pasi94} and recall that Pasini includes residual connectedness in his definition of a geometry). The geometry $\Gamma$ has diagram $D_4$, that is the edges of its basic diagram are $\{1,2\}$, $\{2,3\}$ and  $\{2,4\}$. 
Let $s_+$ be the solid of $M^+$ with equations  $0=x_0=x_2=x_4=x_6$ and $s_-$ be the solid of $M^-$ with equations  $0=x_1=x_2=x_4=x_6$.
 The map $\epsilon:[x_0,x_1,x_2,x_3,x_4,x_5,x_6,x_7]\mapsto [x_1,x_0,x_3,x_2,x_5,x_4,x_7,x_6]$ lies in $\Aut(\Gamma)$, since $\epsilon$ preserves $Q$, $M^+$, and $M^-$. Set $A:=\langle \epsilon\rangle$ and consider the triple $$F_A=(p_0^A,s_+^A,s_-^A),$$ where $p_0:=[1,0,0,0,0,0,0,0]$. 
We claim that $F_A$ is a flag of $\Gamma_{/A}$ of type $J=\{1,3,4\}$ which does not lift to a chamber of $\Gamma$. 
Indeed  $p_0*s_-$, $p_0^\epsilon= [0,1,0,0,0,0,0,0]*s_-$, and  $s_+*s_-$ since they intersect in the plane $\pi$ with equations $0=x_0=x_1=x_2=x_4=x_6$. 
However $\pi=s_+\cap s_-$ does not contain a point of $p_0^A=\{p_0,p_0^\epsilon\}$ and $s_+$ is not incident with $s_-^\epsilon$, and so $F_A$ does not lift to a chamber of $\Gamma$.
Moreover, it is obvious that $F_A$ is a maximal flag of  $\Gamma_{/A}$, and so  $\Gamma_{/A}$ is not a geometry.
}\end{example}

\begin{theorem}
\label{thm:chamberlift}
Let $\Gamma=(X,*,t)$ be a residually connected geometry of finite
rank whose diagram contains no cycles and let
$A\leqslant\Aut(\Gamma)$. If $F_A$ is a chamber of $\Gamma_{/A}$ then
there exists a chamber $F$ of $\Gamma$ such that $\pi_{/A}(F)=F_A$.  
\end{theorem}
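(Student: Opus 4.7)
The plan is to combine Lemma~\ref{lem:placingtree} with the Direct Sum Theorem (Theorem~\ref{thm:directsum}), exploiting that the diagram is a forest so that removing any internal vertex from a tree-path disconnects the path's endpoints. Write $F_A=\{B_i\}_{i\in I}$ with $t_{/A}(B_i)=i$, and let $I=I_1\dotcup\cdots\dotcup I_r$ be the partition of $I$ into connected components of the diagram. Each $(I_t,\sim\cap I_t^2)$ is a tree, so for every $t$ I would pick an arbitrary $k_t\in I_t$ together with some $\alpha_{k_t}\in B_{k_t}$, and apply Lemma~\ref{lem:placingtree} with $J=I_t$ and $E$ the edge set of this tree. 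The lemma produces elements $\alpha_i\in B_i$ for every $i\in I$ such that $\alpha_i*\alpha_j$ whenever $\{i,j\}$ is an edge of the diagram.

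It then remains to show that $F=\{\alpha_i\}_{i\in I}$ is pairwise incident, which I would split into two cases. First, if $i\in I_s$ and $j\in I_t$ with $s\neq t$, then $i$ and $j$ lie in different components of the diagram of $\Gamma$, so Theorem~\ref{thm:directsum} applied to $\Gamma$ immediately gives $\alpha_i*\alpha_j$. Second, if $i$ and $j$ lie in the same tree $I_t$, I would induct on their distance $d$ in that tree. The case $d=1$ is by construction. For $d\geq 2$, let $i=i_0\sim i_1\sim\cdots\sim i_d=j$ be the unique tree-path and focus on $\alpha_{i_{d-1}}$: by induction (or by construction when $d=2$) $\alpha_{i_0}*\alpha_{i_{d-1}}$, and by construction $\alpha_{i_d}*\alpha_{i_{d-1}}$, so both $\alpha_{i_0}$ and $\alpha_{i_d}$ lie in the residue $\Gamma_{\alpha_{i_{d-1}}}$. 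Removing $i_{d-1}$ from the forest disconnects $i_0$ from $i_d$ in the restricted diagram; and since Remark~\ref{rem1}(b) tells us the diagram of a residue has no \emph{more} edges than the restricted diagram, $i_0$ and $i_d$ still lie in distinct components of the residue diagram. Residues of residually connected geometries are themselves residually connected, so applying Theorem~\ref{thm:directsum} inside $\Gamma_{\alpha_{i_{d-1}}}$ yields $\alpha_{i_0}*\alpha_{i_d}$.

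Once pairwise incidence is established, $F$ is a flag of rank $|I|$, hence a chamber of $\Gamma$, and $\pi_{/A}(F)=F_A$ by construction. The point I expect to require the most care is the transfer of disconnection from the full diagram of $\Gamma$ to the diagram of the residue $\Gamma_{\alpha_{i_{d-1}}}$: the crucial direction of Remark~\ref{rem1}(b) is that passing to a residue only removes edges and never adds them, so components cannot merge; without this, the second invocation of the Direct Sum Theorem would not be justified and the induction on distance could not close.
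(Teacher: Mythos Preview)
Your proposal is correct and follows essentially the same route as the paper: place elements along the tree edges via Lemma~\ref{lem:placingtree}, use the Direct Sum Theorem across different components, and within a single tree propagate incidence along the unique path by passing to a residue and applying Direct Sum there. The only cosmetic difference is that the paper cites Lemma~\ref{lem:treech1} for the within-tree step and then ``works down the path'', whereas you unfold that lemma's proof inline via Remark~\ref{rem1}(b); in fact your version is slightly more careful, since after the first step the literal hypothesis $i\sim j\sim k$ of Lemma~\ref{lem:treech1} is no longer met and one really is using the underlying Direct-Sum-in-the-residue argument you spell out.
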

\begin{proof}
Let $I$ be the set of types of $\Gamma$ and suppose that
$F_A=\{B_i\}_{i\in I}$. Let  
$J\subseteq I$ be a connected component of the diagram of
$\Gamma$. Then the restriction  
of the diagram to $J$ is a tree. By Corollary \ref{cor:subtree}, we can find 
$\{\alpha_i\}_{i\in J}$ in $\Gamma$ with $\alpha_i\in B_i$ such that
 $F=\{\alpha_i\}_{i\in J}$ is a flag. If
$J=I$ then $F$ is the  required chamber. If $J\neq I$, let $J_1,\ldots,J_r$ be the connected
components of the  
diagram and for each $J_s$, let $F_s$ be the flag of type $J_s$
constructed as above. By  
Theorem \ref{thm:directsum}, for two connected components $F_s,F_{s'}$
each element of  
$F_s$ is adjacent to every element of $F_{s'}$. Hence $F=F_1\cup
\ldots\cup F_r$ is the  
required chamber of $\Gamma$.
\end{proof}

The following corollaries follow immediately from Theorem \ref{thm:chamberlift} and Lemma \ref{lem:rank3}.
Note that we do not know in Corollary \ref{cor:7.10} if  $\Gamma_{/N}$ is a geometry. It would be interesting to find an example similar to Example \ref{D4example} in which $A$ is a normal subgroup of a chamber-transitive automorphism group.
\begin{corollary}\label{cor:7.10}
Let $\Gamma$ be a residually connected geometry of finite rank
whose diagram contains no cycles such that $G\leqslant\Aut(\Gamma)$ is
chamber-transitive on $\Gamma$ and $N\norml G$. Then $G$ is
chamber-transitive on $\Gamma_{/N}$. 
\end{corollary}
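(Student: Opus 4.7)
The plan is to deduce the corollary directly from Theorem~\ref{thm:chamberlift} together with the fact that $G$ acts by automorphisms on the normal quotient $\Gamma_{/N}$. First I would verify that the $N$-orbits form a $G$-invariant type-refining partition: since $N \norml G$, for any $\alpha \in X$ and $g \in G$ we have $(\alpha^N)^g = \alpha^{Ng} = \alpha^{gN} = (\alpha^g)^N$, so $G$ permutes the parts of $\mathcal{B} = \{\alpha^N \mid \alpha \in X\}$ and consequently induces automorphisms of $\Gamma_{/N}$. Moreover the projection $\pi_{/N}$ is $G$-equivariant.

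Next I would take two arbitrary chambers $F_1, F_2$ of $\Gamma_{/N}$ and invoke Theorem~\ref{thm:chamberlift}. Since $\Gamma$ is residually connected of finite rank with acyclic diagram, that theorem guarantees chambers $\widetilde{F}_1, \widetilde{F}_2$ of $\Gamma$ with $\pi_{/N}(\widetilde{F}_i) = F_i$ for $i=1,2$. By the chamber-transitivity of $G$ on $\Gamma$ there exists $g \in G$ with $\widetilde{F}_1^{g} = \widetilde{F}_2$. Applying $\pi_{/N}$ and using $G$-equivariance yields
\[
F_1^{g} \;=\; \pi_{/N}(\widetilde{F}_1)^{g} \;=\; \pi_{/N}\bigl(\widetilde{F}_1^{g}\bigr) \;=\; \pi_{/N}(\widetilde{F}_2) \;=\; F_2,
\]
so $G$ is chamber-transitive on $\Gamma_{/N}$, provided chambers of $\Gamma_{/N}$ exist at all; but this is automatic, since $\Gamma$ being a geometry furnishes chambers of $\Gamma$, whose images are necessarily flags of full rank, i.e.\ chambers of $\Gamma_{/N}$.

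There is really no hard step here: the entire content has been packaged into Theorem~\ref{thm:chamberlift}, and the only work left is the equivariant-lifting bookkeeping above. If anything, the subtlety worth flagging is that the assertion relies on \emph{every} chamber of $\Gamma_{/N}$ lifting, not just that \emph{some} chamber lifts — without Theorem~\ref{thm:chamberlift} it could in principle happen that the $G$-orbit of $\pi_{/N}(\widetilde{F})$ misses some chamber of $\Gamma_{/N}$ that fails to lift. The acyclic diagram hypothesis is precisely what rules that out.
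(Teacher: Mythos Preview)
Your argument is correct and is exactly the approach the paper takes: the corollary is stated as following immediately from Theorem~\ref{thm:chamberlift}, and your write-up simply unpacks that immediacy (essentially reproving the forward direction of Theorem~\ref{thm:ftrans} with $J=I$). Your remark about needing \emph{every} chamber to lift, not just some chamber, is a correct and pertinent observation.
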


\begin{corollary}
Let $\Gamma$ be a residually connected rank $3$ geometry whose
diagram is not a cycle such that $G\leqslant\Aut(\Gamma)$ is
flag-transitive on $\Gamma$ and $N\norml G$. Then $\Gamma_{/N}$ is a
flag-transitive geometry. 
\end{corollary}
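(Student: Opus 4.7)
The plan is to assemble this as a direct combination of Lemma \ref{lem:rank3}, Theorem \ref{thm:chamberlift}, and Corollary \ref{cor:flagliftft}. Since $N\norml G$, the partition $\mathcal{B}$ of $X$ into $N$-orbits is type-refining and $G$-invariant, so $G$ acts as automorphisms on $\Gamma_{/N}$. First I would note that because $\Gamma$ has rank three, Lemma \ref{lem:rank3} immediately tells us that $\Gamma_{/N}$ is a geometry; this disposes of the geometry part of the conclusion.

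Next I would establish \textsc{(FlagsLift)}. The key observation is that the diagram of $\Gamma$ has only three vertices, so any cycle in this diagram would have to be a triangle. Since by hypothesis the diagram is not a cycle, it contains no cycles at all, i.e.\ it is a forest. Combined with residual connectivity, this lets me invoke Theorem \ref{thm:chamberlift}: every chamber of $\Gamma_{/N}$ lifts to a chamber of $\Gamma$. Now given an arbitrary flag $F_\mathcal{B}$ of $\Gamma_{/N}$, use that $\Gamma_{/N}$ is a geometry to embed $F_\mathcal{B}$ in a chamber $C_\mathcal{B}$; lift $C_\mathcal{B}$ to a chamber $C$ of $\Gamma$ via Theorem \ref{thm:chamberlift}; then the sub-flag $F\subseteq C$ of type $t_{/\mathcal{B}}(F_\mathcal{B})$ projects onto $F_\mathcal{B}$ because $\pi_{/N}$ preserves type. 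This yields \textsc{(FlagsLift)}.

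The final step is then immediate: by Corollary \ref{cor:flagliftft}, the flag-transitivity of $G$ on the geometry $\Gamma$ together with \textsc{(FlagsLift)} for the $G$-invariant partition $\mathcal{B}$ gives flag-transitivity of $G$ on $\Gamma_{/N}$.

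There is essentially no technical obstacle: the ingredients are all in place, and the only small point worth verifying carefully is the translation of ``not a cycle'' into ``no cycles'' in the rank three setting, which works precisely because three vertices admit at most the triangle as a cycle. Everything else is bookkeeping about projecting chambers down and restricting to sub-flags.
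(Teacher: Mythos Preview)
Your proposal is correct and follows essentially the same route as the paper, which simply states that the corollary follows immediately from Theorem~\ref{thm:chamberlift} and Lemma~\ref{lem:rank3}. You have unpacked this properly: Lemma~\ref{lem:rank3} gives that $\Gamma_{/N}$ is a geometry, the rank-three observation converts ``not a cycle'' into ``no cycles'', Theorem~\ref{thm:chamberlift} lifts chambers, extending flags to chambers in the quotient geometry gives \textsc{(FlagsLift)}, and Corollary~\ref{cor:flagliftft} finishes.
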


We saw in Example \ref{eg:cosetegb} a rank $3$ geometry $\Sigma$
with a group $G$ of automorphisms such that $G$ is flag-transitive on
$\Sigma$ but $G$ is not flag-transitive on the normal quotient
$\Sigma_{/N}$. However, since its rank $2$ truncations are not
connected $\Sigma$ is not residually connected.   

\begin{question}
Does there exist a residually connected geometry $\Gamma$ with group
of automorphisms $G$ such that $G$ is flag-transitive on $\Gamma$ but
not flag-transitive on some normal quotient $\Gamma_{/N}$? By
Corollary~\ref{cor:7.10} the basic diagram of such geometry $\Gamma$
must contain at least one cycle.
\end{question}

\section{Shadowable geometries}
\label{sec:shadowable}
Shadowable geometries are the kind of geometries that most readily come to mind. Elements of a chosen type are regarded as ``points'' and all other elements as subsets of points of various kinds. We show that orbit-quotients of shadowable geometries are geometries and that each flag-transitive shadowable geometry $\Gamma$ can be lifted to an arbitrarily large flag-transitive geometry $\hat{\Gamma}$ which admits $\Gamma$ as an orbit-quotient.

Shadowable geometries are embeddable in subset geometries, defined as follows.
Consider a finite set $S$ with $v>1$ elements and
an integer $k$ with $0<k<v$. Let $X_i$ denote the set of all
$(i+1)$-subsets of $S$ for $i\in I=\{0,1,\ldots ,k-1\}$. On the set
$X=\cup_{i\in I}X_i$ we define the relation $*$ to be (symmetrized)
inclusion and the type $t(\alpha)$ of an element $\alpha\in X$ is simply its
cardinality minus $1$. It is easy to verify that
$(X,*,t)$ is indeed a geometry. We call it the \emph{subset
  geometry} of rank $k$ on $v$ elements and denote it by $\ssg(v,k)$.

\remark The subset geometry $\ssg(v,k)$ is the $\{1,2,\ldots
,k\}$-truncation of the thin building of type $A_{v-1}$. In
particular, the basic diagram of $\ssg(v,k)$ is the path $(0,1,\ldots,k-1)$
of length $k-1$. 

For a type $i\in I$ we define the \emph{$i$-shadow} of an element
$\alpha$ of a geometry to be the set of all elements of type $i$ incident
with $\alpha$. The $i$-shadow of $\alpha$ is written $\sigma_i(\alpha)$. 

\begin{definition} 
A geometry $\Gamma = (X,*,t)$ with specified type $0\in I$ will be called
\emph{shadowable} provided the shadow operator $\sigma_0$ is a strong
embedding of $\Gamma$ into a subset geometry $\ssg(v,k-1)$ with
$|t^{-1}(0)|=v$. By strong embedding we mean an injective morphism of
geometries such that $\sigma_0(\alpha)*\sigma_0(\beta)$ if and only if
$\alpha *\beta$ for all $\alpha, \beta\in X$.
\end{definition}

\begin{lemma}
\label{lem:shadowable}
Let $\Gamma$ be a shadowable geometry and let $A\leqslant
\Aut(\Gamma)$. Then $\Gamma_{/A}$ is a geometry. Moreover, if $G$ is
flag-transitive on $\Gamma$ with $A\norml G$ then $G$ acts
flag-transitively on $\Gamma_{/A}$. 
\end{lemma}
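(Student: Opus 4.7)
The strategy is to verify the condition \textsc{(FlagsLift)} for $\Gamma_{/A}$, after which Lemma \ref{lem:flagliftgeom} gives that $\Gamma_{/A}$ is a geometry and Corollary \ref{cor:flagliftft} delivers the flag-transitivity statement in the second part (once we note that $A \norml G$ makes the set of $A$-orbits a $G$-invariant type-refining partition). The key advantage of the shadowable hypothesis is that, via the embedding $\sigma_0$, every element $\alpha \in X$ is identified with a subset $\sigma_0(\alpha) \subseteq X_0$ whose cardinality is determined by $t(\alpha)$, and two elements of $\Gamma$ are incident if and only if one of their shadows is contained in the other. Moreover, since $A$ preserves incidence and type, the shadow map is $A$-equivariant: $\sigma_0(\alpha^a) = \sigma_0(\alpha)^a$ for all $a \in A$.

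The plan is then, given an arbitrary flag $F_A = \{B_1, \ldots, B_r\}$ in $\Gamma_{/A}$, to build representatives $\alpha_i \in B_i$ inductively so that their shadows form a chain under inclusion. Order the blocks so that $|\sigma_0(\alpha)|$ is nondecreasing as $i$ increases (which, by the embedding, is equivalent to ordering by type). Choose any $\alpha_1 \in B_1$. Assuming $\alpha_1, \ldots, \alpha_{j-1}$ have been chosen with $\sigma_0(\alpha_1) \subseteq \cdots \subseteq \sigma_0(\alpha_{j-1})$, use $B_{j-1} *_{/A} B_j$ to pick $\beta_{j-1} \in B_{j-1}$ and $\beta_j \in B_j$ with $\sigma_0(\beta_{j-1}) \subseteq \sigma_0(\beta_j)$. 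Since $B_{j-1} = \alpha_{j-1}^A$, there is $a \in A$ with $\beta_{j-1}^a = \alpha_{j-1}$; set $\alpha_j := \beta_j^a \in B_j$. Then by equivariance
\[
\sigma_0(\alpha_{j-1}) = \sigma_0(\beta_{j-1})^a \subseteq \sigma_0(\beta_j)^a = \sigma_0(\alpha_j),
\]
so the chain is extended. Crucially, this construction only re-chooses the \emph{new} representative $\alpha_j$ at each stage and leaves $\alpha_1, \ldots, \alpha_{j-1}$ untouched, so the chain property is preserved.

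Transitivity of set inclusion then gives $\sigma_0(\alpha_i) \subseteq \sigma_0(\alpha_j)$ for all $i<j$, whence $\alpha_i * \alpha_j$ in the subset geometry and therefore (since $\sigma_0$ is an embedding and so reflects incidence) in $\Gamma$. The $\alpha_i$ are pairwise of distinct types, hence distinct, so $F = \{\alpha_1, \ldots, \alpha_r\}$ is a flag of $\Gamma$ with $\pi_{/A}(F) = F_A$. This establishes \textsc{(FlagsLift)}; both conclusions of the lemma then follow from Lemma \ref{lem:flagliftgeom} and Corollary \ref{cor:flagliftft}. The only real obstacle is realising that one must order the chain by shadow size and extend it \emph{forwards} only: once this is set up, the argument reduces to the triviality that set inclusion is transitive, so no simultaneous matching of several incidences is required.
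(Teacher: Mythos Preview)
Your proof is correct and follows essentially the same route as the paper: order the types by shadow size, inductively pick representatives along this chain using the $A$-action to align successive pairs, and then use transitivity of inclusion to conclude that the representatives form a flag; finally invoke Lemma~\ref{lem:flagliftgeom} and Corollary~\ref{cor:flagliftft}. The only cosmetic difference is that the paper packages the inductive alignment step into the earlier Lemma~\ref{lem:placingtree} (applied to the path $i_1\sim i_2\sim\cdots\sim i_l$ viewed as a tree), whereas you carry out that induction explicitly.
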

\begin{proof}
Since $\Gamma$ is shadowable, there is an ordering $<$ on the types
such that given $i,j\in I$ we have $i<j$ if, given $\alpha$ of type
$i$ and $\beta$ of type $j$, $|\sigma_0(\alpha)|<|\sigma_0(\beta)|$. 
Let $F_A$ be a flag in $\Gamma_{/A}$ with type set
$\{i_1,\ldots,i_l\}$ and suppose that $i_1<i_2<\cdots<i_l$. This
ordering defines a tree on $\{i_1,\ldots,i_l\}$ and so by Lemma
\ref{lem:placingtree}, for each $B_i\in F_A$ of type $i$, there exists
$\beta_i\in B_i$ such that $\beta_1*\beta_2*\cdots*\beta_l$. Thus
$\sigma_0(\beta_1)\subset\sigma_0(\beta_2)\subset\cdots\subset\sigma_0(\beta_l)$
and hence $F=\{\beta_1,\ldots,\beta_l\}$ is a flag in $\Gamma$ which
projects onto $F_A$. Thus by Lemma \ref{lem:flagliftgeom},
$\Gamma_{/A}$ is a geometry. Moreover, if $G$ is flag-transitive on $\Gamma$ with $A\norml G$ then  
 Theorem \ref{thm:flagtrans} implies that $G$ is also flag-transitive
 on $\Gamma_{/A}$.  
\end{proof}

We have the following construction which allows any shadowable
geometry to be lifted to a larger geometry which has the original
geometry as a quotient. 

\begin{construction}
\label{con:liftshadowable}
Let $\Gamma$ be a shadowable geometry of rank $r$. Let $v=|t^{-1}(0)|$ and for each $i\in I\backslash\{0\}$, let
$\ell_i=|\sigma_0(\alpha)|$ for some element $\alpha$ of type $i$. Since $\sigma_0$ is a morphism, $\ell_i$ is independent of the
choice of $\alpha$.   

For each $n> 2$ and $j$ such that $1< j<n$, we define a new
geometry $\Gamma^{n,j}$ also of rank $r$. The elements of type $0$ are
the vertices of the complete multipartite graph $\Sigma$ with $v$
parts of size $n$ and we label the partite blocks of $\Sigma$ by the
elements of $\Gamma$ of type $0$. Then for each $i\in
I\backslash\{0\}$, the elements of type $i$ of $\Gamma^{n,j}$ are the
complete $\ell_i$-partite subgraphs of $\Sigma$ with blocks of size $j$
whose blocks are labelled by the elements of $\sigma_0(\alpha)$ for some
element $\alpha$ of $\Gamma$ of type $i$.  Incidence is given by
natural inclusion in the graph $\Sigma$. 
\end{construction}

\begin{theorem}
Let $\Gamma^{n,j}$ be obtained from Construction \ref{con:liftshadowable}.
\begin{enumerate}
\item $\Gamma^{n,j}$ is a geometry.
\item Let $H$ be a vertex-transitive group of automorphisms of $\Gamma$. Then $G=S_n\Wr H$ acts vertex-transitively on  $\Gamma^{n,j}$.
\item If $H$ is flag-transitive on $\Gamma$ then $G$ is
  flag-transitive on $\Gamma^{n,j}$. 
\item Let $N=S_n^v\norml G$. Then $\Gamma^{n,j}_{/N}\cong \Gamma$.
 \end{enumerate}
\end{theorem}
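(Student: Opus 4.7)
The approach centres on defining a type-preserving \emph{$\Gamma$-label} map $\lambda\colon \Gamma^{n,j}\to \Gamma$: for a type-$0$ vertex $v\in\Sigma$, $\lambda(v)$ is the label of the partite block of $\Sigma$ containing $v$; for an element $T$ of type $i>0$, $\lambda(T)$ is the unique $\alpha\in X_i$ whose shadow $\sigma_0(\alpha)$ consists of the labels of the partite blocks of $T$ (well-defined by the injectivity of the embedding $\sigma_0$). My first step would be two basic observations: (a) if $T_1\subseteq T_2$, then $\sigma_0(\lambda(T_1))\subseteq \sigma_0(\lambda(T_2))$, and since $\sigma_0$ is an embedding into a subset geometry we get $\lambda(T_1)*\lambda(T_2)$, so $\lambda$ carries flags to flags; (b) if moreover $p\in \sigma_0(\lambda(T_1))$, then the partite blocks $P_p\cap T_1$ and $P_p\cap T_2$ coincide, both being $j$-subsets of $P_p$ with one contained in the other. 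In particular, a flag $F$ of $\Gamma^{n,j}$ unambiguously determines a $j$-subset $B_p\subseteq P_p$ for every $p\in \bigcup_{T\in F}\sigma_0(\lambda(T))$.

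For (1) I would take a flag $F$ of $\Gamma^{n,j}$, extend $\lambda(F)$ to a chamber $F'$ of $\Gamma$ (using that $\Gamma$ is a geometry), and build a chamber $\tilde F'$ over $F'$ containing $F$: for each new label $p\in\bigcup_{\alpha\in F'}\sigma_0(\alpha)$ not already pinned by $F$, pick an arbitrary $j$-subset $B_p\subseteq P_p$; for each $\alpha'\in F'$ of type $i>0$, take the complete $\ell_i$-partite subgraph with partite blocks $\{B_p \mid p\in \sigma_0(\alpha')\}$; the type-$0$ element is either the vertex already in $F$ (if present) or any vertex of the appropriate $B_p$. Observation (b) guarantees that the $B_p$'s inherited from $F$ are consistent, so $F\subseteq \tilde F'$ and $\pi_{/N}$ is irrelevant here.

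For (2) and (3) the key is that $H$ acts on partite-block labels exactly as on the type-$0$ elements of $\Gamma$, while the base group $N=S_n^v$ acts as $S_n$ on each partite block. Part (2) follows because vertex-transitivity of $H$ on $\Gamma$ gives transitivity on collections of partite blocks with prescribed label structure, while $S_n$ is transitive on vertices and on $j$-subsets of each $P_p$. For (3), given two $J$-flags $F_1,F_2$, I would first apply flag-transitivity of $H$ to find $h\in H$ with $\lambda(F_1)^h=\lambda(F_2)$, and then use the base group at each label $p$ to map the $j$-subset $B_p$ of $h(F_1)$ to that of $F_2$, simultaneously mapping the type-$0$ vertex of $h(F_1)$ to that of $F_2$ when present (using the transitivity of $S_n$ on pointed $j$-subsets of $P_p$).

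For (4) I would verify that the $N$-orbit of $T\in \Gamma^{n,j}$ is exactly $\lambda^{-1}(\lambda(T))$: $N$ fixes every label, and conversely $S_n$ is transitive on $j$-subsets of each $P_p$. Thus $\lambda$ descends to a type-preserving bijection $\Gamma^{n,j}_{/N}\to \Gamma$; observation (a) gives incidence preservation in one direction, and the converse is obtained by choosing arbitrary $j$-subsets $B_p$ for $p\in\sigma_0(\lambda(T_2))$ whenever $\lambda(T_1)*\lambda(T_2)$, producing incident representatives. The main technical obstacle is part (1): organising the $j$-subset choices so that the lifted chamber genuinely contains $F$. Observation (b) is what removes this obstacle, after which everything reduces to routine verification.
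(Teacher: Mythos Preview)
Your approach is correct and follows the same line as the paper's proof, which is a three-sentence sketch asserting that flags of $\Gamma^{n,j}$ come from flags of $\Gamma$ via the complete-multipartite structure, that the $G$-action and flag-transitivity are ``clear'', and that $N$-orbits on type-$i$ elements correspond to elements of $\Gamma$ of type $i$. Your label map $\lambda$ and observations (a), (b) make precise exactly what the paper leaves implicit.

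There is one small slip in part~(1). When $F=\{v\}$ consists of a single type-$0$ vertex with label $p_0=\lambda(v)$, no element of $F$ pins a full $j$-subset of $P_{p_0}$ in your sense (only type~$>0$ elements do), so your recipe chooses $B_{p_0}$ arbitrarily and then declares $v$ to be the type-$0$ element of $\tilde F'$; but then $v\notin B_{p_0}$ is possible, and $v$ would fail to be incident with the subgraphs you build, so $F\not\subseteq\tilde F'$. The fix is immediate: whenever $F$ contains a type-$0$ vertex $v$, require $v\in B_{\lambda(v)}$ (possible since $j\ge 2$). If $F$ also contains any $T$ of type~$>0$, then $v*T$ already forces $v\in P_{\lambda(v)}\cap T=B_{\lambda(v)}$, so the issue is confined to the singleton case.
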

\begin{proof}
Since $\Sigma$ is a complete multipartite graph and flags of
$\Gamma^{n,j}$ are obtained from flags of $\Gamma$, it follows that
each flag of $\Gamma^{n,j}$ is contained in a chamber and hence
$\Gamma^{n,j}$ is a geometry. Clearly, $G=S_n\Wr H$ is a group of
automorphisms and the assertion about flag-transitivity follows. 
Let $N=S_n^v\norml G$. The orbits of $N$ on the elements of type $i$
correspond to the elements of $\Gamma$ of type $i$ and so
$\Gamma^{n,j}_{/N}\cong \Gamma$. 
\end{proof}

\end{document}